\documentclass[11pt,a4paper]{article}

\usepackage{amsfonts}
\usepackage{amsthm}
\usepackage{amsmath}
\usepackage{amssymb}
\usepackage{amscd}
\usepackage{mathrsfs}
\usepackage[mathscr]{eucal}
\usepackage{graphicx}
\usepackage{epstopdf} 
\usepackage{pict2e}
\usepackage{epic}
\usepackage{xcolor}
\usepackage{float}
\usepackage{mathtools}
\usepackage{centernot}

\usepackage{cite}
\usepackage{hyperref}
\hypersetup{colorlinks=true, urlcolor= black, linkcolor=black, citecolor=black}

\usepackage[utf8]{inputenc}
\usepackage[T1]{fontenc}
\usepackage{lmodern}
\usepackage{dsfont}
\usepackage{indentfirst}

\numberwithin{equation}{section}

\theoremstyle{plain}
\newtheorem{Thm}{Theorem}[section]
\newtheorem{Lem}[Thm]{Lemma}
\newtheorem{Coro}[Thm]{Corollary}
\newtheorem{Prop}[Thm]{Proposition}

\theoremstyle{definition}
\newtheorem{Def}[Thm]{Definition}

\newtheorem{Rem}[Thm]{Remark}

\usepackage[margin=2.5cm]{geometry}

\newcommand\n{\mathbf{n}}

\newcommand\sn{\partial\mathbf{n}}

\newcommand\fg{\mathfrak{g}}

\newcommand{\connect}{\xleftrightarrow}

\title{Mass scaling of the near-critical Ising model in dimensions $d\geq 4$}
\begin{document}

\author{Romain Panis}
\date{}

\maketitle
\abstract{We study the Ising model on $\mathbb{Z}^d$ with $d\geq 4$ and derive 
near-critical bounds on the truncated two-point function 
$\langle\sigma_0;\sigma_x\rangle_{\beta,h} 
:= \langle\sigma_0\sigma_x\rangle_{\beta,h} 
- \langle\sigma_0\rangle_{\beta,h}\langle\sigma_x\rangle_{\beta,h}$ at parameters $\beta\leq\beta_c$ and $h\geq 0$. As a corollary, we obtain that 
the associated mass (or exponential decay rate) is equal to
\begin{equation*}
	\max\bigl((\beta_c-\beta)^{1/2},h^{1/3}\bigr)^{1+o(1)},
\end{equation*} 
where $o(1)$ tends to $0$ as 
$(\beta,h)$ tends to $(\beta_c,0)$. The proof combines the corresponding result 
at $h=0$, recently established by Duminil-Copin and Panis 
\cite{DuminilPanis2024newLB}, with an interpolation argument inspired by 
Aizenman and Fern\'andez 
\cite{AizenmanFernandezCriticalBehaviorMagnetization1986} and carried out 
via the random current representation of the model.}

\section{Introduction}

The Ising model is one of the most fundamental examples of a discrete statistical mechanics model undergoing a \emph{phase transition}. We refer to \cite{FriedliVelenikIntroStatMech2017,DuminilLecturesOnIsingandPottsModels2019} for mathematical introductions, and to \cite{DuminilreviewIsing} for a review. 

\vspace{4pt}

In this work, we study the Ising model on subgraphs of the hypercubic lattice  $\mathbb Z^d$. The associated edge-set is denoted 
$E(\mathbb{Z}^d):=\{xy:\Vert x-y\Vert_1=1\}$, where $\Vert\cdot\Vert_1$ 
denotes the $\ell^1$ norm on $\mathbb{R}^d$. We write $x\sim y$ if $xy\in E(\mathbb Z^d)$. Let $G=(V,E)$ be a 
finite subgraph of $\mathbb{Z}^d$. For an \emph{interaction} 
$J=(J_e)_{e\in E(\mathbb Z^d)}\in(\mathbb{R}^+)^{E(\mathbb Z^d)}$, an \emph{external magnetic field} 
$h\in\mathbb{R}$, and a \emph{boundary condition} 
$\tau\in\{-1,0,1\}^{\mathbb{Z}^d}$, define a probability measure on 
$\{-1,1\}^V$ by setting
\begin{equation}
    \langle F(\sigma)\rangle_{G,J,h}^\tau
    :=\frac{1}{Z^\tau_{G,J,h}}\sum_{\sigma\in\{-1,1\}^V}F(\sigma)
    \exp\Big(\sum_{xy\in E}J_{xy}\sigma_x\sigma_y
    +h\sum_{x\in V}\sigma_x
    +\sum_{\substack{x\in V\\y\notin V}}
    \mathds{1}_{x\sim y}J_{xy}\sigma_x\tau_y\Big),
\end{equation}
where $F:\{-1,1\}^V\rightarrow\mathbb{R}$ and $Z^{\tau}_{G,J,h}$ is the 
\emph{partition function}, ensuring that $\langle\cdot\rangle_{G,J,h}^\tau$ 
is a probability measure. The choice $\tau\equiv 0$ (resp. $\tau\equiv 1$) corresponds to the 
\emph{free} (resp. \emph{plus}) measure, which we denote by 
$\langle\cdot\rangle_{G,J,h}$ (resp. $\langle\cdot\rangle_{G,J,h}^+$). 
It is classical 
(see~\cite{GriffithsCorrelationsIsing1-1967,GriffithsCorrelationsIsing2-1967}) 
that both families of measures admit an infinite-volume weak limit as 
$G\nearrow\mathbb{Z}^d$, denoted by $\langle\cdot\rangle_{J,h}$ and 
$\langle\cdot\rangle_{J,h}^+$ respectively. When $h=0$, we omit it from 
the notation.

We focus on the homogeneous case $J\equiv \beta$, where $\beta\geq 0$ is called the \emph{inverse temperature}. When $h=0$, the Ising model undergoes a phase transition as $\beta$ varies. In dimensions $d\geq 2$, there exists $\beta_c\in (0,\infty)$ \cite{PeierlsIsing1936} such that the following holds: letting $m^*(\beta):=\langle \sigma_0\rangle_{\beta}^+$ denote the \emph{spontaneous magnetisation}, one has
\begin{equation}\label{eq: mag}
	m^*(\beta)\begin{cases}\displaystyle=0 &\text{ if }\beta<\beta_c,\\
    \displaystyle>0&\text{ if }\beta>\beta_c.\end{cases}
\end{equation}
This phase transition is \emph{continuous} (or \emph{second-order}) in the sense that $m^*(\beta_c)=0$, see \cite{Yang1952spontaneous,AizenmanFernandezCriticalBehaviorMagnetization1986,WernerPercolationEtModeledIsing2009,AizenmanDuminilSidoraviciusContinuityIsing2015}. The change of behaviour in \eqref{eq: mag} is related to the possibility of \emph{breaking the flip symmetry} of the system, i.e.~to the existence of a translation-invariant infinite-volume measure distinct from $\langle \cdot \rangle_{\beta}$. For $\beta\leq \beta_c$, the free measure is the only such measure; while for $\beta>\beta_c$, one has $\langle \cdot\rangle_{\beta}^+\neq \langle \cdot\rangle_{\beta}$ (in fact much more is known \cite{BodineauTranslationGibbsIsing2006,RaoufiGibbsMeasures}). When $h>0$, the symmetry is always broken: $\langle \sigma_0\rangle_{\beta,h}>0$. Moreover, there exists exactly one infinite volume measure (see \cite{FriedliVelenikIntroStatMech2017} and references therein) so that $\langle \cdot\rangle_{\beta,h}=\langle \cdot\rangle_{\beta,h}^+$. 

\vspace{4pt}

We are interested in the behaviour of the model for $(\beta,h)$ close to 
the critical point $(\beta_c,0)$. The picture changes drastically, both quantitatively and qualitatively, depending on the dimension.

The \emph{upper-critical dimension} of the model is expected to be equal to four (see e.g.~\cite{WilsonFisher1972dcexpansion}). This means that the model should exhibit \emph{mean-field behaviour} in dimensions $d\geq 4$ but not in dimensions $2\leq d<4$.
The \emph{mean-field regime} $d\geq 4$ is characterised by a \emph{Gaussian} 
behaviour, reflected in \emph{trivial} (near-)critical scaling limits 
and \emph{critical exponents} matching those of the model defined on a 
tree or the complete graph. Several methods have been developed to rigorously justify these 
predictions. The most notable ones are the \emph{lace expansion}, 
introduced by Brydges and Spencer~\cite{BrydgesSpencerSAW} and later 
applied to the Ising model in dimensions $d\gg 4$ 
\cite{Sakai2007LaceExpIsing,Sakai2022correctboundsIsing}, and the 
differential inequality approach pioneered by Aizenman in the 
1980s~\cite{AizenmanGeometricAnalysis1982} which exploits a special 
property of the model called \emph{reflection positivity} \cite{FrohlichSimonSpencerIRBounds1976,FrohlichIsraelLiebSimon1978}.  Thanks to these developments, the (near-)critical phase of the Ising model in dimensions $d\geq 4$ is now relatively well-understood, especially in the zero external magnetic field case. Main successes include triviality results \cite{AizenmanGeometricAnalysis1982,FrohlichTriviality1982,AizenmanDuminilTriviality2021,PanisTriviality2023} and critical exponent computations \cite{AizenmanFernandezCriticalBehaviorMagnetization1986,AizenmanBarskyFernandezSharpnessIsing1987,AizenmanGeometricAnalysis1982,FrohlichTriviality1982,DuminilPanis2024newLB,vEGPS,AizenmanGrahamRenormalizedCouplingSusceptibility4d1983,Sakai2007LaceExpIsing}.
Additional approaches to the study of the mean-field regime have been developed in related models, 
see~\cite{DumPan24WSAW,DumPan24Perco,duminil2026random,hutchcroft2025criticalI,hutchcroft2025criticalIII}. Moreover, the rigorous \emph{renormalization group} method has been developed in the close context of the weakly coupled $\varphi^4$ model, see e.g.~\cite{gawedzki1985massless,BBS19,SladeTombergRGWeakPhi4in2016,BauerschmidtBrydgesSlade2014Phi4fourdim,lohmann2020critical,park2025renormalisation,bauerschmidt2017finite}.

It is known that the Ising model is not mean-field when $d=2$, 
see e.g.,~\cite{AizenmanGeometricAnalysis1982,camia2016planar}. The
two-dimensional model is now very well understood, owing to special properties 
including \emph{exact integrability} 
\cite{Onsager1944,Yang1952spontaneous,McCoyWu1973two} and 
\emph{conformal invariance} 
\cite{SmirnovConformalInvarianceIsing,ChelkakHonglerIz2015conformal,ChelkakDCHonglerKempSmir2014CV}. 
The three-dimensional Ising model, by contrast, lacks appropriate tools for its analysis. Nevertheless, the \emph{conformal bootstrap} approach yields 
several precise predictions on its critical exponents 
\cite{RychkovNonGaussianity2017,ElShowkRychkov2012ConfBootstrapI,ElShowkRychkov2014ConfBootstrapII}.

\vspace{4pt}


\vspace{4pt}

A fundamental quantity in the analysis of the Ising model is the so-called \emph{truncated two-point function} defined, for $\beta,h\geq 0$ and $x,y\in \mathbb Z^d$ by
\begin{equation}
	\langle \sigma_x;\sigma_y\rangle_{\beta,h}^+:=\langle \sigma_x\sigma_y\rangle_{\beta,h}^+-\langle \sigma_x\rangle_{\beta,h}^+\langle \sigma_y\rangle_{\beta,h}^+.
\end{equation}
As proved by Graham~\cite{graham1982correlation}, the truncated two-point 
function satisfies the following inequality: for every $x,y,z\in\mathbb{Z}^d$,
\begin{equation}
    \langle\sigma_x;\sigma_y\rangle_{\beta,h}^+
    \geq\langle\sigma_x;\sigma_z\rangle_{\beta,h}^+
    \langle\sigma_z;\sigma_y\rangle_{\beta,h}^+.
\end{equation}
By a classical application of Fekete's lemma, one obtains the existence 
of the \emph{correlation length} $\xi(\beta,h)$, defined by
\begin{equation}\label{eq:def correlation length intro}
    \xi(\beta,h)
    :=\Big(
       -\lim_{n\to\infty}\frac{1}{n}
       \log\langle\sigma_0;\sigma_{n\mathbf{e}_1}\rangle_{\beta,h}^+
    \Big)^{-1}\in [0,\infty],
\end{equation}
where $\mathbf{e}_1=(1,0,\ldots,0)\in \mathbb Z^d$, but also that
\begin{equation}\label{eq:bound truncated fekete}
	\langle \sigma_0;\sigma_{n\mathbf{e}_1}\rangle_{\beta,h}^+\leq \exp\left(-\frac{n}{\xi(\beta,h)}\right).
\end{equation}
The \emph{mass} of the model is given by $\xi(\beta,h)^{-1}$.
It is classical that $\xi(\beta,h)\in(0,\infty)$ when $\beta,h>0$, see e.g.~\cite[Theorem~1.2]{ott2020sharp}. Moreover, one has that $\xi(\beta,0)\in(0,\infty)$ 
when $\beta\in (0,\infty)\setminus\{\beta_c\}$~\cite{AizenmanBarskyFernandezSharpnessIsing1987,DuminilTassionNewProofSharpness2016,duminil2020exponential}. Finally, it holds that $\xi(\beta_c,0)=\infty$ by the combination of \cite{SimonInequalityIsing1980} and \cite{AizenmanDuminilSidoraviciusContinuityIsing2015}, meaning that correlations persist over all length scales. More precisely, it is predicted \cite{Widom1965equation} that there exists a critical exponent $\eta$, called the \emph{anomalous dimension}, such that
\begin{equation}\label{eq:def eta}
	\langle \sigma_0\sigma_x\rangle_{\beta_c,0}^+=\langle \sigma_0\sigma_x\rangle_{\beta_c}=\frac{1}{|x|^{d-2+\eta+o(1)}},
\end{equation}
where $|\cdot|$ denotes the infinite norm, and where $o(1)$ tends to $0$ as $|x|$ tends to infinity. 

For a continuous phase transition, one expects $\xi(\beta,h)$ to 
diverge as $(\beta,h)$ approaches $(\beta_c,0)$. Here and below, we restrict the discussion to the regime $\beta\leq \beta_c$ and $h\geq 0$. In particular, by the uniqueness results recalled above, one has that $\langle \cdot\rangle^+_{\beta,h}=\langle \cdot\rangle_{\beta,h}$. It is predicted that there exist critical exponents $\nu,\nu'>0$ such that
\begin{equation}\label{eq: postulate of divergence for xi}
	\xi(\beta,h)=\min\bigl( (\beta_c-\beta)^{-\nu +o(1)},h^{-\nu'+o(1)}\bigr),
\end{equation}
where $o(1)$ goes to $0$ as $(\beta,h)$ tends to $(\beta_c,0)$.
The $\min$ in \eqref{eq: postulate of divergence for xi} reflects 
the fact that, away from $(\beta_c,0)$, the dominant ``off-critical'' effect comes 
either from the thermal perturbation ($\beta<\beta_c$) or from the 
magnetic perturbation ($h>0$). This gives rise, respectively, to what we call the \emph{thermal} and 
\emph{magnetic} off-critical regimes. 

On the physics side, the values of $\nu$ and $\nu'$ can be computed using \emph{scaling theory}. Indeed, \eqref{eq:def eta} and \eqref{eq: postulate of divergence for xi} are illustrations of the broader prediction of Widom \cite{Widom1965equation} that, for second-order phase transitions and near or at the critical point, thermodynamic quantities should adopt an algebraic scaling governed by critical exponents. As observed in the 1960s \cite{Fisher1964correlation,EssamFisher1963pade}, these exponents are constrained by \emph{scaling relations}, which allow one to deduce some from the knowledge of others. In particular, the exponents $\nu$ and $\nu'$ are related to other known critical exponents, defined below in the context of the Ising model. For $\beta\leq \beta_c$ and $h\geq 0$, let
\begin{equation}
	\chi(\beta,h):=\sum_{x\in \mathbb Z^d}\langle \sigma_0;\sigma_x\rangle_{\beta,h}, \qquad M(\beta,h):=\langle \sigma_0\rangle_{\beta,h},
\end{equation}
denote the \emph{truncated susceptibility} and the \emph{magnetisation}, respectively. The quantity $\chi(\beta,h)$ (resp. $M(\beta_c,h)$) is expected to diverge (resp. vanish) as $(\beta,h)\to (\beta_c,0)$ (resp. as $h\searrow 0$), leading to the introduction of critical exponents $\gamma,\gamma',\delta>0$ defined by
\begin{equation}\label{eq: def gamma delta}
	\chi(\beta,h)=\min\bigl((\beta_c-\beta)^{-\gamma+o(1)},h^{-\gamma'+o(1)}\bigr), \qquad M(\beta_c,h)=h^{1/\delta+o(1)},
\end{equation}
where, as before, $o(1)$ tends to $0$ as $(\beta,h)$ approaches $(\beta_c,0)$. The exponents introduced in \eqref{eq:def eta}, \eqref{eq: postulate of divergence for xi}, and \eqref{eq: def gamma delta} are predicted to satisfy the following relations (see \cite{Fisher1998renormalization,DuminilreviewIsing} and references therein):
\begin{equation}\label{eq:scaling relation}
	2-\eta=\frac{\gamma}{\nu}=\frac{\gamma'}{\nu'}=\min(d,d_c)\frac{\delta-1}{\delta+1},
\end{equation}
where $d_c$ is the upper-critical dimension of the model. For the Ising model, it is conjectured that $d_c=4$. Moreover, in dimensions $d\geq 4$, it is known that $\gamma=1$ when $h=0$ (in the sense that $\chi(\beta,0)=(\beta_c-\beta)^{-1+o(1)}$), see \cite{AizenmanGeometricAnalysis1982,AizenmanGrahamRenormalizedCouplingSusceptibility4d1983}; but also that $\delta=3$, and $\gamma'=2/3$ when $\beta=\beta_c$ (in the sense that $\chi(\beta_c,h)=h^{-2/3+o(1)}$), see \cite{AizenmanFernandezCriticalBehaviorMagnetization1986}. Substituting these values in the above scaling relation leads to the prediction $(\nu,\nu')=(1/2,1/3)$. Additionally, we note that \cite{DuminilPanis2024newLB} proves that $\eta=0$ and $\nu=1/2$ in the regime where $h=0$ (in the sense that $\xi(\beta,0)=(\beta_c-\beta)^{-1/2+o(1)}$).
\vspace{4pt}

The main purpose of the present article is to estimate $\xi(\beta,h)$ in dimensions $d\geq 4$ and when $\beta\leq\beta_c$ and $h\geq 0$. We prove that \eqref{eq: postulate of divergence for xi} holds with $\nu=1/2$ and $\nu'=1/3$, thereby confirming the prediction from scaling theory. Our results also give that the scaling of $\chi(\beta,h)$ in \eqref{eq: def gamma delta} holds with $\gamma=1$ and $\gamma'=2/3$, in the same regime of parameters. Assuming that $d_c=4$, this gives a proof of \eqref{eq:scaling relation} in dimensions $d\geq 4$. Before stating the main results, we briefly review the \emph{near-critical regime} of the model.

\vspace{4pt}

\noindent\textbf{Notations.}
Let $\mathbb N:=\{0,1,2,\ldots\}$. If $a,b\in \mathbb R$, we let $a\wedge b:=\min(a,b)$. For $x=(x_1,\ldots,x_d)\in\mathbb R^d$, let $|x|:=\max_{1\leq i\leq d}|x_i|$ denote
the infinite norm. If $n\in \mathbb N$, let $\Lambda_n:=\{x\in \mathbb Z^d: |x|\leq n\}$. For $S\subset\mathbb Z^d$, let
$E(S):=\{xy\in E(\mathbb Z^d):x,y\in S\}$, $\partial S:=\{x\in S: \exists y\notin S, \: x\sim y\}$, and $\textup{diam}(S):=\max\{|x-y|: x,y\in S\}$. In what follows, we often identify the set $S$ with the subgraph of $\mathbb Z^d$ $(S,E(S))$. We denote by $\mathbf{e}_i$ the element of the canonical basis of $i$-th coordinate equal to $1$. If $f,g>0$, we write $f\lesssim g$ (or $g\gtrsim f$) if there exists $C=C(d)>0$ such that $f\leq Cg$. If $f\lesssim g$ and $g\lesssim f$, we write $f\asymp g$.

\subsection{The near-critical regime}

We will restrict to the quarter-plane of parameters $0\leq \beta\leq \beta_c$ and $h\geq 0$. This motivates the following definitions: 
\begin{equation}
    \mathcal Q:=\{(\beta,h):0\leq\beta\leq\beta_c,\ h\geq0\}, \qquad \mathcal Q^*:=\mathcal Q\setminus \{(\beta_c,0)\}.
\end{equation}
Let $(\beta,h)\in \mathcal Q^*$.
By the uniqueness results recalled above, one has $\langle\cdot\rangle_{\beta,h}^+ 
= \langle\cdot\rangle_{\beta,h}$. This means that we can replace the plus measure by the free one in \eqref{eq:def correlation length intro}. Understanding the behaviour of 
$\xi(\beta,h)$ as $(\beta,h)\to(\beta_c,0)$ requires estimates on the 
truncated two-point function that are uniform in both $(\beta,h)$ and the 
spatial scale $|x|$. It is predicted that $\langle\sigma_0;\sigma_x\rangle_{\beta,h}$ 
exhibits three spatial regimes, determined by how $|x|$ compares to 
$\xi(\beta,h)$.
Below, we omit the precise meaning of $\ll,\gg,\approx$ for simplicity.
\begin{itemize}
    \item If $|x|\ll\xi(\beta,h)$, the truncated two-point function 
    is of order $\langle \sigma_0\sigma_x\rangle_{\beta_c}$, 
    reflecting the fact that the model still ``feels critical''.

    \item If $|x|\gg\xi(\beta,h)$, the truncated two-point function is 
    \emph{massive}: its spatial 
    decay is described by the \emph{Ornstein--Zernike theory}~\cite{ornstein1914accidental,zernike1916clustering}, 
    characterised by exponential decay at a direction-dependent rate 
    (equal to $\xi(\beta,h)^{-1}$ in the $\mathbf{e}_1$ direction) and 
    a prefactor of order $|x|^{-(d-1)/2}$. 

    \item If $|x|\approx\xi(\beta,h)$, the picture is more complex: 
    neither of the two preceding descriptions is accurate. This is the 
    \emph{near-critical} regime, where one observes a \emph{crossover} between the above two distinct types of decay.
\end{itemize} A customary scaling
ansatz (see e.g.~\cite{Fisher1998renormalization}) unifies these three regimes in the form 
\begin{equation}\label{eq:expected behaviour in the near-critical regime}
    \langle\sigma_0;\sigma_x\rangle_{\beta,h}
    \approx
    \frac{1}{|x|^{d-2+\eta}}\,
    \mathcal F\left(\frac{x}{\xi(\beta,h)}\right),
\end{equation} 
where $\mathcal F$ remains of constant order near the origin and decays
exponentially at infinity, and $\eta$ is the same exponent as in \eqref{eq:def eta}.
%

The regime where $|x|$ tends to infinity at a fixed $(\beta,h)$ is well-understood. There, exact asymptotic estimates on the truncated two-point function were obtained using the rigorous Ornstein--Zernike (OZ) theory initiated in \cite{campanino2002ornstein}: the case $\beta<\beta_c$ and $h=0$ was studied in \cite{campanino2003ornstein}, while the setting $h>0$ and $\beta\geq 0$ was treated in \cite{ott2020sharp}. However, these works are not quantitative in $(\beta,h)$ and do not identify $\xi(\beta,h)$ as the scale beyond which the OZ estimate starts being true.

Other known results focus on the critical and near-critical regimes. They are restricted to the two-dimensional setting and the 
mean-field regime $d\geq 4$. 

For the two-dimensional Ising model, it is known that $\eta=1/4$ \cite{McCoyWu1973two}, and that $\xi(\beta,0)=(\beta_c-\beta)^{-1+o(1)}$ \cite{duminil2014near,McCoyWu1973two} where $o(1)$ tends to $0$ as $\beta\nearrow \beta_c$. Near-critical bounds on $\langle \sigma_0;\sigma_x\rangle_{\beta_c,h}$ were derived in \cite{CamiaJiangNewmanNearCritical2020,camia2020fk} (see also \cite{KlausenRaoufi2022,jiang2025simple} for shorter proofs of the upper bound). These bounds imply that $\xi(\beta_c,h)\asymp h^{-8/15}$ as $h\searrow 0$. Let us mention that these results also yield interesting information regarding the near-critical scaling limit of the model constructed in \cite{camia2016planar}. Finally, in a recent work \cite{dalimonte2025near}, a version of \eqref{eq:expected behaviour in the near-critical regime} in the case $h=0$ (and holding more generally for the \emph{random cluster model} with cluster weight $q\in [1,4)$) was derived, identifying up to a multiplicative constant the function $\mathcal F$.

 In dimensions $d\geq 4$, \cite{DuminilPanis2024newLB} proves that $\eta=0$ and derives near-critical bounds on the two-point function at $(\beta,0)$ with $\beta\leq \beta_c$. In particular, it is known that $\xi(\beta,0)=(\beta_c-\beta)^{-1/2+o(1)}$ where $o(1)$ tends to $0$ as $\beta\nearrow \beta_c$. Results in the magnetised setting are more ``averaged''. As mentioned above, \cite{AizenmanFernandezCriticalBehaviorMagnetization1986} obtains that
 \begin{equation}
 	\chi(\beta_c,h)=\sum_{x\in \mathbb Z^d}\langle \sigma_0;\sigma_x\rangle_{\beta_c,h}\asymp h^{-2/3+o(1)},
 \end{equation}
 with $o(1)$ tending to $0$ as $h$ tends to $0$.
 If \eqref{eq:expected behaviour in the near-critical regime} holds true, one obtains that $\chi(\beta_c,h)\asymp \xi(\beta_c,h)^{2+o(1)}$, recovering the prediction from scaling theory: 
 \begin{equation}
 \xi(\beta_c,h)\asymp h^{-1/3+o(1)}.
 \end{equation}
  In this paper, we derive near-critical bounds on $\langle\sigma_0;\sigma_x\rangle_{\beta_c,h}$ that are sufficient to compute the $1/3$ exponent. We do not, however, prove any version of \eqref{eq:expected behaviour in the near-critical regime}. Our techniques allow us to go beyond the case $\beta=\beta_c$ and provide the scaling of $\xi(\beta,h)$ in the whole quarter plane $\mathcal Q^*$.
 
Let us mention that the above questions have been studied in other contexts of statistical mechanics. Near-critical bounds on the two-point function have been derived for (spread-out) Bernoulli percolation in dimensions $d>6$ \cite{HutchcroftMichtaSladePercolationTorusPlateau2023,DumPan24Perco,duminil2026random,panis2026reversing}, the weakly self-avoiding walk model \cite{Liu2023Plateau,Slade2023Plateau,DumPan24WSAW} in dimensions $d>4$, and spread-out lattice trees \cite{liu2025near} in dimensions $d>8$. In a recent paper, Liu and Slade \cite{liu2026crossover} used the lace expansion to prove \eqref{eq:expected behaviour in the near-critical regime} (in the strongest sense) for the self-avoiding walk $(d>4)$ and Bernoulli percolation ($d\geq 15$). We expect that their methods can be combined with the works of Sakai \cite{Sakai2007LaceExpIsing,Sakai2022correctboundsIsing} to obtain the corresponding result for the Ising model in dimensions $d\gg 4$.

\subsection{Statement of the results}

We now describe the main results of this paper. Recall that $\mathcal Q=\{(\beta,h):0\leq\beta\leq\beta_c,\ h\geq0\}$ and $\mathcal Q^*=\mathcal Q\setminus \{(\beta_c,0)\}$, and define, for $\delta\in (0,\beta_c)$,
\begin{equation}
    \mathcal Q_\delta
    :=\{(\beta,h):\beta_c-\delta\leq\beta\leq\beta_c,\
    0\leq h\leq\delta\}\setminus \{(\beta_c,0)\}.
\end{equation}
For $(\beta,h)\in \mathcal Q^*$, we let
\begin{equation}
	\ell(\beta,h):=(\beta_c-\beta)^{-1/2}\wedge h^{-1/3}.
\end{equation}
The thermal (resp. magnetic) off-critical regime corresponds to $\ell(\beta,h)=(\beta_c-\beta)^{-1/2}$ (resp. $\ell(\beta,h)=h^{-1/3}$).

Our first result holds in dimensions $d>4$. We obtain a uniform near-critical
upper bound on $\langle \sigma_0;\sigma_x\rangle_{\beta,h}$, together with a matching
order lower bound below the scale $\ell(\beta,h)$.

\begin{Thm}\label{thm:main}
Let $d>4$.  There exist $c,C,\delta>0$ such that, for every
$(\beta,h)\in\mathcal Q_\delta$ and every $x\in\mathbb Z^d$,
\begin{equation}\label{eq:main upper intro}
    \langle\sigma_0;\sigma_x\rangle_{\beta,h}
    \leq \frac{C}{1\vee|x|^{d-2}}
    \exp\left(-c\frac{|x|}{\ell(\beta,h)}\right).
\end{equation}
Moreover, if $|x|\leq c\,\ell(\beta,h)$, then
\begin{equation}\label{eq:main lower intro}
    \langle\sigma_0;\sigma_x\rangle_{\beta,h}
    \geq \frac{c}{1\vee|x|^{d-2}}.
\end{equation}
\end{Thm}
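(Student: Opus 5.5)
We prove the upper bound \eqref{eq:main upper intro} and the lower bound \eqref{eq:main lower intro} separately. Throughout we use the result of Duminil-Copin and Panis \cite{DuminilPanis2024newLB} at $h=0$ in $d>4$:
\[
\langle\sigma_0\sigma_x\rangle_{\beta}\leq \frac{C}{1\vee|x|^{d-2}}e^{-c|x|/\xi(\beta,0)}, \qquad \xi(\beta,0)\asymp(\beta_c-\beta)^{-1/2}.
\]
It also gives a matching lower bound $\langle\sigma_0\sigma_x\rangle_\beta\gtrsim |x|^{2-d}$ for $|x|\le c(\beta_c-\beta)^{-1/2}$.

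\emph{Monotonicity reductions.} By the GHS inequality, $h\mapsto\langle\sigma_0;\sigma_x\rangle_{\beta,h}$ is nonincreasing, so $\langle\sigma_0;\sigma_x\rangle_{\beta,h}\le\langle\sigma_0\sigma_x\rangle_{\beta}$. In the thermal regime $\ell=(\beta_c-\beta)^{-1/2}$ this already gives \eqref{eq:main upper intro}. In the magnetic regime, monotonicity in $\beta$ of truncated correlations is not available in general. To handle it, I would write $\langle\sigma_0;\sigma_x\rangle_{\beta,h}$ in the random current representation with a ghost vertex $\mathfrak g$ coupled to every site with intensity $h$. Then
\[
\langle\sigma_0;\sigma_x\rangle_{\beta,h}=\langle\sigma_0\sigma_x\rangle_{\beta,h}\,\mathbf P^{0x,\emptyset}_{\beta,h}\bigl[0\not\leftrightarrow\mathfrak g\text{ in }\n_1+\n_2\bigr].
\]
The task is to show that, for $|x|\ge L:=h^{-1/3}$, the probability that the sourced cluster avoids $\mathfrak g$ over a distance $L$ decays like $e^{-c}$ per scale $L$. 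For the first factor I would bound $\langle\sigma_0\sigma_x\rangle_{\beta,h}\le\langle\sigma_0\sigma_x\rangle_{\beta_c}\lesssim|x|^{2-d}$ after comparison, since the truncated function is dominated by the non-truncated one at $\beta_c$ via the decomposition above.

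\emph{Exponential decay at scale $h^{-1/3}$ (main obstacle).} I would work at $\beta=\beta_c$ first and use the Aizenman--Fern\'andez interpolation: along the current from $0$ to $x$, any box of size $L$ traversed by the backbone is touched by the current $\n_1+\n_2$ in a region of volume at least of order $L^2$ sites connected to the path. At the ghost, this gives a chance $\ge 1-\exp(-c\,h\,L^2\,M(\beta_c,h)^{-1}\cdots)$ to connect. The cleanest route is the estimate
\[
\mathbf P[\text{avoid }\mathfrak g\text{ in one scale}]\le 1-c,
\]
proved from the bound $\chi(\beta_c,h)\asymp h^{-2/3}$ and $M(\beta_c,h)\asymp h^{1/3}$ of \cite{AizenmanFernandezCriticalBehaviorMagnetization1986}. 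The connection to $\mathfrak g$ within a box of side $L$ has weight $\approx hL^2\cdot L^{\,\cdot}$, of order one exactly when $L=h^{-1/3}$. I would then chain this over $|x|/L$ disjoint annuli using a mixing or Markov property of the currents (restart via the switching lemma, conditioning on the cluster explored so far). This yields the factor $e^{-c|x|/L}$.

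For general $(\beta,h)\in\mathcal Q_\delta$ in the magnetic regime, $\beta_c-\beta\le h^{2/3}$, so the critical two-point bounds persist up to scale $L$. This is a uniform input from \cite{DuminilPanis2024newLB}, and the same argument applies. The delicate parts are the decoupling across scales and obtaining the correct $L^2$ volume (this is where $d>4$ enters, via the bubble condition, to ensure that currents intersect the relevant region with order-one probability).

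\emph{Lower bound.} For $|x|\le c\,\ell$, I would write
\[
\langle\sigma_0;\sigma_x\rangle_{\beta,h}=\langle\sigma_0\sigma_x\rangle_{\beta,h}\,\mathbf P[0\not\leftrightarrow\mathfrak g].
\]
Griffiths' inequality gives $\langle\sigma_0\sigma_x\rangle_{\beta,h}\ge\langle\sigma_0\sigma_x\rangle_\beta\gtrsim|x|^{2-d}$ by \cite{DuminilPanis2024newLB}. It remains to show that $\mathbf P[0\leftrightarrow\mathfrak g]\le 1/2$. By a union bound, this probability is at most the expected number of ghost edges opened by the sourced cluster. That expectation is bounded by $h$ times the sum over $z$ of $\langle\sigma_0\sigma_z\rangle\langle\sigma_z\sigma_x\rangle/\langle\sigma_0\sigma_x\rangle$ restricted to scale $\lesssim\ell$, plus a tail controlled by the upper bound just proven. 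That sum is of order $h\,\ell^{2}\cdot\ell\cdot$const$\,\lesssim c^{3}$, hence small for small $c$.
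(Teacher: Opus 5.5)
\textbf{Lower bound.} This step fails in $d>4$; it is not a matter of missing detail. Combine the identity you use with $\langle\sigma_0\sigma_x\rangle_{\beta,h}=\langle\sigma_0;\sigma_x\rangle_{\beta,h}+M(\beta,h)^2$, GHS and \eqref{eq:IRB}. This gives exactly
\[
\mathbf P^{0x,\emptyset}_{\beta,h}[0\leftrightarrow\mathfrak{g}]=\frac{M(\beta,h)^2}{\langle\sigma_0\sigma_x\rangle_{\beta,h}}\geq\frac{M(\beta,h)^2}{M(\beta,h)^2+C|x|^{2-d}}.
\]
At $\beta=\beta_c$ one has $M\gtrsim h^{1/3}$, so this probability tends to $1$ as soon as $|x|\gg h^{-2/(3(d-2))}$. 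That scale is far below $c\,h^{-1/3}$ when $d>4$. Equivalently, your conclusion $\langle\sigma_0;\sigma_x\rangle_{\beta,h}\geq\frac12\langle\sigma_0\sigma_x\rangle_{\beta,h}\geq\frac12 M^2\gtrsim h^{2/3}$ contradicts $\langle\sigma_0;\sigma_x\rangle_{\beta,h}\lesssim|x|^{2-d}$ at $|x|=c\,h^{-1/3}$. Your union bound undercounts because under $\mathbf P^{0x}_{\beta,h}$ the sourced current typically routes its backbone through $\mathfrak{g}$; that is precisely the $M^2$ part of $\langle\sigma_0\sigma_x\rangle_{\beta,h}$.

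The comparison has to be with $\langle\sigma_0\sigma_x\rangle_{\beta}$ at $h=0$. The paper integrates $\partial_s\langle\sigma_0;\sigma_x\rangle_{\beta,s}$ over $s\in[0,h]$ using the Aizenman--Fern\'andez tree bound \eqref{eq: lb on derivative in h of 2pt}. It combines this with $\sum_y K_{\beta,s}(y,z)\lesssim s^{-1}M(\beta_c,s)\lesssim s^{-2/3}$ and $\sum_z K_{\beta,s}(0,z)K_{\beta,s}(x,z)\lesssim |x|^{4-d}$. The resulting error is $\lesssim h^{2/3}|x|^{4-d}$, which is negligible against $|x|^{2-d}$ for $|x|\leq c\,h^{-1/3}$.

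\textbf{Upper bound.} Your thermal-regime reduction via GHS is correct and matches the paper. The magnetic regime, however, has two problems.

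First, $\langle\sigma_0\sigma_x\rangle_{\beta,h}\leq\langle\sigma_0\sigma_x\rangle_{\beta_c}$ is false: the left side is at least $M^2>0$ for every $x$ by Proposition \ref{prop:secondineqGriffiths}. With your exact factorization you would therefore need $\mathbf P^{0x,\emptyset}_{\beta,h}[0\not\leftrightarrow\mathfrak{g}]\leq C|x|^{2-d}e^{-c|x|/L}/\langle\sigma_0\sigma_x\rangle_{\beta,h}$, where the denominator is $\approx M^2$. That requires polynomial precision. A ``$1-c$ per annulus'' estimate only yields $e^{-c|x|/L}$, which loses a factor of order $L^{d-4}$ for $|x|$ of order $L$.

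Second, and more fundamentally, chaining over $|x|/L$ annuli is the entire difficulty, and you give no mechanism for it. There is no known Simon--Lieb inequality for truncated correlations; Baker--Bessis suggests none holds. You also do not identify any property of currents that would make per-annulus avoidance probabilities multiply, conditionally on the explored part of the cluster.

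The ideas you are missing are these:
\begin{itemize}
\item[(i)] Keep only backbone paths $0\to x$ that avoid $\mathfrak{g}$. This defines $K_{\beta,h}$, with $\langle\sigma_0;\sigma_x\rangle_{\beta,h}\leq K_{\beta,h}(0,x)\leq\langle\sigma_0\sigma_x\rangle_\beta$. Crucially, $K$ satisfies a Simon--Lieb inequality (Lemma \ref{lem: SL for K}), at the cost of an evolving excluded edge set $E$.
\item[(ii)] Instead of proving any ghost-connection estimate, extrapolate the condition $\sup_E\mathbb E[\Psi_{E^c,\beta,h}(\mathcal S_n)]\leq\frac12$ from $(\beta',0)$ with $\beta'=\beta-\eta h^{2/3}$, where the $h=0$ sharp-length input applies, to $(\beta,h)$. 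The $\beta$-derivative of $K$ is bounded via Proposition \ref{prop: derivative K}. Together with $\sum_u K(0,u)\lesssim h^{-1}M\lesssim h^{-2/3}$, this makes the error $O(\eta)$. The random set $\mathcal S_n$ is what controls $\varphi^u_{\beta_c}$ on average in that derivative bound.
\end{itemize}
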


At the upper-critical dimension $d=4$, we obtain the following result.

\begin{Thm}\label{thm:main bis}
Let $d=4$ and $\varepsilon>0$.  There exist $c,C,\delta>0$ such that, for every
$(\beta,h)\in\mathcal Q_\delta$ and every $x\in\mathbb Z^4$,
\begin{equation}\label{eq:nc up bound thm bis}
    \langle\sigma_0;\sigma_x\rangle_{\beta,h}
    \leq \frac{C}{1\vee|x|^2}
    \exp\left(
        -c\frac{|x|}
        {\ell(\beta,h)(\log\ell(\beta,h))^{2+\varepsilon}}
    \right).
\end{equation}
Moreover, if
$|x|\leq c\,\ell(\beta,h)(\log\ell(\beta,h))^{-2}$, then
\begin{equation}\label{eq:main lower four intro}
    \langle\sigma_0;\sigma_x\rangle_{\beta,h}
    \geq \frac{c}{1\vee|x|^2\log|x|},
\end{equation}
with the convention that $|x|^2\log|x|=0$ when $x=0$.
\end{Thm}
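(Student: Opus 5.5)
We plan to reduce Theorem~\ref{thm:main bis} to the $h=0$ near-critical bounds of \cite{DuminilPanis2024newLB}, following the same route as Theorem~\ref{thm:main} and keeping track of the logarithmic losses that appear at $d=4$. The argument has two halves: an upper bound built from a finite-scale sharpness criterion, and a lower bound obtained by comparison with the $h=0$ two-point function.

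\textbf{Upper bound.} We first reduce \eqref{eq:nc up bound thm bis} to a one-scale estimate. Graham's inequality and the random current representation (with a ghost vertex $\mathfrak g$ connected to every site with coupling $h$) give a ``sharpness at scale $L$'' criterion. If for some $L$ one has
\begin{equation*}
\sum_{y\in\partial\Lambda_L}\langle\sigma_0;\sigma_y\rangle_{\beta,h}\le \tfrac12,
\end{equation*}
or a Simon--Lieb type analogue adapted to the truncated function, then iteration yields $\langle\sigma_0;\sigma_x\rangle_{\beta,h}\le C|x|^{-2}e^{-c|x|/L}$. The $|x|^{-2}$ prefactor comes from monotonicity in $(\beta,h)$ together with the critical bound $\langle\sigma_0\sigma_x\rangle_{\beta_c}\lesssim |x|^{-2}$.

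It then suffices to find such an $L$ of order $\ell(\log\ell)^{2+\varepsilon}$, and we treat the two regimes separately.

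In the thermal regime we use the monotonicity $\langle\sigma_0;\sigma_x\rangle_{\beta,h}\le\langle\sigma_0\sigma_x\rangle_{\beta}$. This follows from GHS: the truncated two-point function is decreasing in $h$. We then invoke the $d=4$ result of \cite{DuminilPanis2024newLB}.

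In the magnetic regime the interpolation idea of Aizenman--Fern\'andez is the key. Through the random current expansion we write
\begin{equation*}
\partial_h\langle\sigma_0;\sigma_x\rangle_{\beta,h}=\sum_z\langle\sigma_0;\sigma_x;\sigma_z\rangle_{\beta,h}.
\end{equation*}
The triple truncated function is nonpositive, and by a tree-diagram bound it satisfies
\begin{equation*}
\bigl|\langle\sigma_0;\sigma_x;\sigma_z\rangle\bigr|\lesssim M\sum_u \langle\sigma_0;\sigma_u\rangle\langle\sigma_u;\sigma_x\rangle\langle\sigma_u;\sigma_z\rangle.
\end{equation*}
Summing over $z$ produces a factor $M(\beta,h)\chi(\beta,h)$. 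Using the known bounds $M(\beta_c,h)\approx h^{1/3}$ and $\chi\approx h^{-2/3}$ (with log corrections at $d=4$, from \cite{AizenmanFernandezCriticalBehaviorMagnetization1986}), we obtain a differential inequality. Integrating it in $h$ shows that the boxwise sum of the truncated function over $\partial\Lambda_L$ drops below $1/2$ once $L\gg h^{-1/3}$ up to logarithms. The $(\log\ell)^{2+\varepsilon}$ loss should come from the logarithmic corrections to $M$ and $\chi$ in $d=4$, together with the bubble-type sum
\begin{equation*}
\sum_{u\in\Lambda_L}\langle\sigma_0\sigma_u\rangle_{\beta_c}^2\asymp\log L.
\end{equation*}

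\textbf{Lower bound.} For $|x|\le c\,\ell(\log\ell)^{-2}$ we again differentiate, this time in both $\beta$ and $h$, and integrate from the critical point. We start from the lower bound $\langle\sigma_0\sigma_x\rangle_{\beta}\gtrsim(|x|^2\log|x|)^{-1}$ below the thermal scale given by \cite{DuminilPanis2024newLB}; this is where the $\log|x|$ in \eqref{eq:main lower four intro} comes from. We then control the decrease in $h$ by
\begin{equation*}
\int_0^h\sum_z\bigl|\langle\sigma_0;\sigma_x;\sigma_z\rangle\bigr|\,dh'\lesssim hM\sum_u\frac{\chi_{\Lambda_{|x|}}}{|u|^2|x-u|^2}.
\end{equation*}
This is a small multiple of $|x|^{-2}$ provided $h\,M\,|x|^2\log$ factors are $\ll 1$, which corresponds to $|x|\ll h^{-1/3}(\log)^{-2}$.

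\textbf{Main obstacle.} We expect the hardest step to be the upper bound in the magnetic regime. A uniform-in-$x$ control of the tree-diagram bound with the correct logarithmic corrections at $d=4$ is delicate, since the Aizenman--Fern\'andez bounds are only averaged. Our first attempt would be to run the argument on finite boxes $\Lambda_L$ with a boundary-field (ghost) interpretation of $h$. This should reduce the problem to estimating the probability that the ghost is connected into $\Lambda_L$, which is of order $hL^4 M$, and the threshold $hL^3\approx1$ then emerges.
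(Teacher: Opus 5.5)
Your thermal-regime reduction via GHS and the $h=0$ bound is fine. The upper bound in the magnetic regime $\ell(\beta,h)=h^{-1/3}$, which is the real content of the theorem, fails as proposed, for two independent reasons.

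\textbf{No finite-scale criterion for the truncated function.} Graham's inequality $\langle\sigma_x;\sigma_y\rangle\ge\langle\sigma_x;\sigma_z\rangle\langle\sigma_z;\sigma_y\rangle$ is supermultiplicativity. It gives existence of $\xi$ but cannot bound $\xi$ from information at one scale. A Simon--Lieb inequality with truncated functions throughout is not known, and is expected to fail in general (Baker--Bessis). The missing idea is to replace $\langle\sigma_0;\sigma_x\rangle_{\beta,h}$ by the backbone two-point function $K_{\beta,h}(0,x)$, the sum of backbone weights over paths $0\to x$ avoiding the ghost. It satisfies $\langle\sigma_0;\sigma_x\rangle_{\beta,h}\le K_{\beta,h}(0,x)\le\langle\sigma_0\sigma_x\rangle_{\beta}$, and cutting a path at its first exit from $S$ gives a Simon--Lieb inequality for $K$. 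Because $K$ is not monotone in the volume, explored edges must be removed at each iteration, so the criterion is $\sup_{E}\Psi_{E^c,\beta,h}(S)\le\tfrac12$, not a bound on a boxwise sum.

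\textbf{The interpolation runs in the wrong direction.} You integrate in $h$ from $h=0$, using $\partial_h\langle\sigma_0;\sigma_x\rangle\le0$ and an \emph{upper} bound on $|\partial_h\langle\sigma_0;\sigma_x\rangle|$. This only shows the truncated function does not drop too much, i.e.\ it gives a lower bound; that is exactly how the paper proves \eqref{eq:main lower four intro}. Pushing a boxwise sum below $\tfrac12$ would need a lower bound on the rate of decrease. Moreover, the starting value at $h=0$ is not small at scale $h^{-1/3}$: at $\beta=\beta_c$, Simon's inequality gives $\varphi_{\beta_c}(S)\ge1$ for every finite $S\ni0$. For this observable there is no analogue of the relation $\partial_\beta M\le 2dM\,\partial_hM$ that Aizenman--Fern\'andez use to close a differential inequality.

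\textbf{What the paper does instead.} It moves $\beta$ at fixed $h$, starting from $\beta'=\beta-h^{2/3}|\log h|^{-2}(\log|\log h|)^{-1}$. There, $\Psi\le\varphi$ and the $h=0$ sharp-length theorem already make the quantity small on scales of order $(\beta_c-\beta')^{-1/2}|\log(\beta_c-\beta')|$. It then controls the \emph{increase} from $\beta'$ to $\beta$ with an upper bound on $\partial_{J_e}K$, read off from the explicit backbone weight formula (Proposition~\ref{prop: derivative K}). This yields a term $\lesssim(\beta-\beta')\sum_u K_{E^c,J(t),h}(0,u)\,\varphi^u_{\beta_c}(S)$. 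The field enters only through $\sum_uK(0,u)\lesssim h^{-1}M(\beta_c,h)\lesssim h^{-2/3}|\log h|$. Bounding $\varphi^u_{\beta_c}(S)$ for $u$ away from the origin is not known for deterministic $S$. The paper therefore uses the random set $\mathcal S_n$ built from folded currents, with $\mathbb E[\varphi^u_{\beta_c}(\mathcal S_n)]\lesssim\log n$ for $u\in\Lambda_{n/2}$, and changes couplings only inside $\Lambda_{n/2}$. The exponent $2+\varepsilon$ comes from balancing the $|\log h|$ in $M$, this $\log n$, and the logarithm in the $d=4$ sharp length.

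\textbf{Lower bound.} Your route is the paper's, with two caveats. The available tree bound (Aizenman--Fern\'andez) is stated with $K$, plus a $\tanh(h)$ term, not with truncated functions. More importantly, your display restricts the $z$-sum (via $\chi_{\Lambda_{|x|}}$) and the $u$-sum to a box of size $|x|$ without justification. With the full sums, the bubble $\sum_u|u|^{-2}|x-u|^{-2}$ diverges in $d=4$, so critical bounds alone do not suffice. The paper cuts it off with $\sum_zK_{\beta,s}(0,z)^2\lesssim\log\ell(\beta,s)$, deduced from the upper bound on $K$. This gives the uniform error $h^{2/3}|\log h|^3$ and hence the range $|x|\lesssim h^{-1/3}|\log h|^{-2}$.
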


As mentioned above, one has $\eta=0$ in dimensions $d\geq 4$ (where $\eta$ is defined in \eqref{eq:def eta}). In fact, \cite{DuminilPanis2024newLB} proves that $\langle \sigma_0\sigma_x\rangle_{\beta_c}\asymp |x|^{2-d}$ when $d>4$, and $|x|^{-2}\gtrsim \langle \sigma_0\sigma_x\rangle_{\beta_c}\gtrsim (1\vee |x|^2\log |x|)^{-1}$ when $d=4$. In particular, the polynomial terms in \eqref{eq:main upper intro}--\eqref{eq:main lower four intro} are of order the critical two-point function $\langle \sigma_0\sigma_x\rangle_{\beta_c}$. As a result, in dimensions $d>4$, the truncated two-point function $\langle \sigma_0;\sigma_x\rangle_{\beta,h}$ looks critical below $\ell(\beta,h)$, and is off-critical---with an exponential rate of order $\ell(\beta,h)^{-1}$---above $\ell(\beta,h)$. A similar picture holds in $d=4$ up to logarithmic corrections.
\begin{Rem}\hspace{-1pt}(1) It is natural (see e.g.~\cite{BauerschmidtBrydgesSlade2014Phi4fourdim,bauerschmidt2017finite}) to see logarithmic corrections appearing in the \emph{marginal} case $d=4$. We do not, however, expect the logarithmic terms in Theorem
\ref{thm:main bis} to be of the correct order. 

(2) The need for a parameter $\varepsilon>0$ is technical, as can be seen from \eqref{eq: where eps becomes crucial}.

(3)  In the thermal off-critical regime
$\ell(\beta,h)=(\beta_c-\beta)^{-1/2}$, the exponent $2+\varepsilon$ on the logarithm in
\eqref{eq:nc up bound thm bis} can be replaced with $1$.  Indeed, the inequality
    $\langle\sigma_0;\sigma_x\rangle_{\beta,h}
    \leq\langle\sigma_0\sigma_x\rangle_{\beta}$
(see Proposition \ref{prop: trunc is bounded by h=0}) reduces the upper bound in that regime to the zero-field result of
\cite{DuminilPanis2024newLB} stated in Theorem
\ref{thm:bounds 2pt d=4}.	
\end{Rem}

The above results can be used to derive estimates on the correlation length $\xi(\beta,h)$ defined in \eqref{eq:def correlation length intro} (and therefore on the mass $\xi(\beta,h)^{-1}$). Again, additional logarithmic corrections emerge when $d=4$.

\begin{Coro}\label{coro:correlation length intro}
Let $d\geq 4$ and $\varepsilon>0$. There exist $c,C,\delta>0$ such that, for every $(\beta,h)\in \mathcal Q_\delta$,
\begin{equation}\label{eq:correlation length bounds intro}
    c\,\ell(\beta,h)(\log\ell(\beta,h))^{-1}
    \leq \xi(\beta,h)\leq C\ell(\beta,h), \qquad \textup{if }d>4,
\end{equation}
and 
\begin{equation}\label{eq:correlation length bounds intro bis}
    c\,\ell(\beta,h)(\log\ell(\beta,h))^{-3}
    \leq \xi(\beta,h)\leq C\ell(\beta,h)(\log \ell(\beta,h))^{2+\varepsilon}, \qquad \textup{if }d=4.
\end{equation}
\end{Coro}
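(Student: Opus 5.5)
The corollary should follow by comparing the definition \eqref{eq:def correlation length intro} with Theorems~\ref{thm:main} and~\ref{thm:main bis}. The upper bounds on $\xi$ come from the exponential decay bounds. The lower bounds come from combining the lower bounds below scale $\ell$ with the Fekete-type inequality \eqref{eq:bound truncated fekete}.

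\emph{Upper bound on $\xi$.} Take $x=n\mathbf e_1$ in \eqref{eq:main upper intro}. For $d>4$ this gives
\[
-\frac1n\log\langle\sigma_0;\sigma_{n\mathbf e_1}\rangle_{\beta,h}\geq \frac{c}{\ell(\beta,h)}-\frac{\log C+(d-2)\log n}{n}.
\]
Letting $n\to\infty$ yields $\xi(\beta,h)^{-1}\geq c/\ell(\beta,h)$, that is, $\xi\leq C\ell$. The same computation with \eqref{eq:nc up bound thm bis} gives $\xi\leq C\ell(\log\ell)^{2+\varepsilon}$ when $d=4$.

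\emph{Lower bound on $\xi$.} Set $n:=\lfloor c\,\ell(\beta,h)\rfloor$ for $d>4$, and $n:=\lfloor c\,\ell(\log\ell)^{-2}\rfloor$ for $d=4$. Since $\ell\to\infty$ as $(\beta,h)\to(\beta_c,0)$, we may shrink $\delta$ so that $n\geq 1$. The lower bounds \eqref{eq:main lower intro} and \eqref{eq:main lower four intro} then apply to $x=n\mathbf e_1$. Combining them with \eqref{eq:bound truncated fekete} gives
\[
\frac{c}{n^{d-2}}\leq \langle\sigma_0;\sigma_{n\mathbf e_1}\rangle_{\beta,h}\leq e^{-n/\xi(\beta,h)},
\]
hence
\[
\xi(\beta,h)\geq \frac{n}{(d-2)\log n+\log(1/c)}\gtrsim \frac{n}{\log n}.
\]
For $d>4$ this is $\gtrsim \ell/\log\ell$. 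For $d=4$ the lower bound reads $c/(n^2\log n)$, so the denominator is of order $\log n\lesssim\log\ell$. We obtain $\xi\gtrsim \ell(\log\ell)^{-2}(\log\ell)^{-1}=\ell(\log\ell)^{-3}$, which is \eqref{eq:correlation length bounds intro bis}.

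There is no serious obstacle here. The only points needing care are the following.
\begin{itemize}
\item Shrink $\delta$ so that $\ell$ is large enough: this ensures $n\geq 2$ and $\log\ell>1$, so the constants can be absorbed.
\item Use that Theorems~\ref{thm:main} and~\ref{thm:main bis} hold uniformly on $\mathcal Q_\delta$, where the free and plus measures coincide. This is what makes \eqref{eq:bound truncated fekete} applicable.
\end{itemize}
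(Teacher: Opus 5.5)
Your proposal is correct and follows the paper's proof essentially verbatim. The upper bound on $\xi$ is read off from the exponential decay in Theorems~\ref{thm:main}--\ref{thm:main bis}, and the lower bound comes from evaluating \eqref{eq:main lower intro} and \eqref{eq:main lower four intro} at distance $\asymp\ell$ (resp.\ $\asymp\ell(\log\ell)^{-2}$) and comparing with \eqref{eq:bound truncated fekete}; the only cosmetic difference is that the paper phrases the lower bound as a contradiction argument, whereas you solve the inequality for $\xi$ directly.
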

In fact, we expect $\xi(\beta,h)\asymp \ell(\beta,h)$ in dimensions $d>4$. In dimension $d=4$, we conjecture\footnote{Using the results of \cite{bauerschmidt2017finite} 
and assuming that the $\varphi^4$ and Ising models lie in the 
same universality class leads to the prediction $\tilde{\nu}=1/6$.} the existence of $\tilde \nu,\tilde \nu'>0$ such that 
\begin{equation}
	\xi(\beta,h)=(\beta_c-\beta)^{-1/2}|\log (\beta_c-\beta)|^{\tilde \nu+o(1)}\wedge h^{-1/3}|\log h|^{\tilde \nu'+o(1)},
	\end{equation}
	where $o(1)$ tends to $0$ as $(\beta,h)\to (\beta_c,0)$. 
	
	The proof of Corollary \ref{coro:correlation length intro} is a direct consequence of Theorems \ref{thm:main}--\ref{thm:main bis} and \eqref{eq:bound truncated fekete}.
\begin{proof}[Proof of Corollary~\textup{\ref{coro:correlation length intro}}] We begin with the case $d>4$. Let $c,C,\delta>0$ be given by Theorem \ref{thm:main}. On the one hand, combining \eqref{eq:main upper intro} with the definition of $\xi(\beta,h)$ gives that, for every $(\beta,h)\in \mathcal Q_\delta$, one has 
\begin{equation}
\xi(\beta,h)\leq c^{-1}\ell(\beta,h).
\end{equation}
On the other hand, let us assume by contradiction that $\xi(\beta,h)< c_1\ell(\beta,h)(\log \ell(\beta,h))^{-1}$ for some $c_1\in (0,c)$ to be chosen small enough below. Combining \eqref{eq:bound truncated fekete} and \eqref{eq:main lower intro} gives
\begin{equation}
	\frac{c}{(c\ell(\beta,h))^{d-2}}\leq \langle \sigma_0;\sigma_{c\ell(\beta,h)\mathbf{e}_1}\rangle_{\beta,h}\leq \exp\left(-\frac{c\ell(\beta,h)}{\xi(\beta,h)}\right)\leq \frac{1}{\ell(\beta,h)^{c/c_1}},
\end{equation}
where the last inequality follows from our assumption. Clearly, the above cannot hold if $c_1$ is small enough, uniformly in $\ell(\beta,h)\geq 2$. 

We now assume that $d=4$. Let $c',C',\delta'>0$ be given by Theorem \ref{thm:main bis}. Arguing exactly as above, one obtains that, for every $(\beta,h)\in \mathcal Q_{\delta'}$
\begin{equation}
	\xi(\beta,h)\leq (c')^{-1}\ell(\beta,h)(\log \ell(\beta,h))^{2+\varepsilon}.
\end{equation}
Now, assume that $\xi(\beta,h)<c_2 \ell(\beta,h)(\log \ell(\beta,h))^{-3}$ for some $c_2\in (0,c')$ to be chosen small enough below. Equations
 \eqref{eq:bound truncated fekete} and \eqref{eq:main lower four intro} imply that
\begin{multline}
	\frac{c'(\log \ell(\beta,h))^3}{(c'\ell(\beta,h))^2}\leq \langle \sigma_0;\sigma_{c'\ell(\beta,h)(\log \ell(\beta,h))^{-2}\mathbf{e}_1}\rangle_{\beta,h}\\\leq \exp\left(-c'\frac{\ell(\beta,h)(\log \ell(\beta,h))^{-2}}{\xi(\beta,h)}\right)\leq \frac{1}{\ell(\beta,h)^{c'/c_2}},
\end{multline}
where the last inequality follows from our assumption. Again, the above cannot hold if $c_2$ is small enough, uniformly in $\ell(\beta,h)\geq 2$. This concludes the proof.
\end{proof}

Finally, Theorems \ref{thm:main} and \ref{thm:main bis} allow to analyse $\chi(\beta,h)$. Let us mention that the upper bounds were already known as explained in Theorem \ref{thm: susceptibility} below.
\begin{Coro} Let $d\geq 4$. There exist $c,C,\delta>0$ such that the following holds. For every $(\beta,h)\in \mathcal Q_\delta$,
\begin{equation}
c\,\ell(\beta,h)\leq \chi(\beta,h)\leq C\ell(\beta,h)^2,\qquad \textup{if }d>4,
\end{equation}
and
\begin{equation}
c\,\ell(\beta,h)^2(\log \ell(\beta,h))^{-1}\leq \chi(\beta,h)\leq C\ell(\beta,h)^2\log \ell(\beta,h),\qquad \textup{if }d=4.
\end{equation}
\end{Coro}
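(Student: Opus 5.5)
For $d>4$ the plan is to sum the pointwise bounds of Theorem~\ref{thm:main} over $x\in\mathbb Z^d$, using $\chi(\beta,h)=\sum_x\langle\sigma_0;\sigma_x\rangle_{\beta,h}$. For $d=4$ the same summation gives a lower bound with too many logarithms, so I expect that case to need separate, and as yet unverified, input.

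\emph{Upper bound for $d>4$.} Insert \eqref{eq:main upper intro} and sum over dyadic shells $\{2^k\le|x|<2^{k+1}\}$. Each shell has about $2^{kd}$ points, so
\begin{equation*}
\chi(\beta,h)\lesssim 1+\sum_{k\ge0}2^{kd}\,2^{-k(d-2)}e^{-c2^k/\ell(\beta,h)}\lesssim \sum_{k\ge 0}4^{k}e^{-c2^k/\ell(\beta,h)}\lesssim \ell(\beta,h)^2 .
\end{equation*}
Indeed, the terms with $2^k\le\ell$ form a geometric series dominated by its last term $\ell^2$. The terms with $2^k>\ell$ are killed by the exponential. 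This is also the upper bound expected from Theorem~\ref{thm: susceptibility}, which I would simply cite.

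\emph{Upper bound for $d=4$.} Summing \eqref{eq:nc up bound thm bis} in the same way only gives $\chi\lesssim\ell^2(\log\ell)^{4+2\varepsilon}$, which is too weak. I would therefore take the bound $\chi(\beta,h)\le C\ell(\beta,h)^2\log\ell(\beta,h)$ directly from Theorem~\ref{thm: susceptibility}, as the text announces.

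\emph{Lower bound for $d>4$.} Since all terms $\langle\sigma_0;\sigma_x\rangle_{\beta,h}$ are nonnegative (by GHS), restrict the sum to $|x|\le c\ell$ and use \eqref{eq:main lower intro}. Summing over dyadic shells gives
\begin{equation*}
\chi(\beta,h)\ge\sum_{|x|\le c\ell}\frac{c}{1\vee|x|^{d-2}}\gtrsim \ell(\beta,h)^2\ge c\,\ell(\beta,h).
\end{equation*}
This is stronger than what is claimed.

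\emph{Lower bound for $d=4$: the main obstacle.} Restricting to $|x|\le R:=c\ell(\log\ell)^{-2}$ and using \eqref{eq:main lower four intro} only gives
\begin{equation*}
\chi\gtrsim\sum_{r\le R} r^3\cdot\frac{1}{r^2\log r}\asymp \frac{R^2}{\log R}\asymp \frac{\ell^2}{(\log\ell)^5}.
\end{equation*}
That is short of the claimed $\ell^2(\log\ell)^{-1}$. The best I can propose is a route whose $h$-monotonicity step I have not checked.

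First I would try to find this lower bound already in the external results collected in Theorem~\ref{thm: susceptibility}, namely Aizenman--Graham for $h=0$ and Aizenman--Fern\'andez at $\beta=\beta_c$.

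Failing that, I would try to reduce to those two endpoint cases. In the magnetic regime, I would try to show that $\chi(\beta,h)$ does not decrease when $\beta$ is raised from $\beta$ to $\beta_c$. This would reduce to $\chi(\beta_c,h)\gtrsim h^{-2/3}(\log h^{-1})^{-1}$. There I would use GHS concavity of $h\mapsto M(\beta_c,h)$ together with a mean-field inequality of the type $M\le h\chi+CM^3$ (Aizenman--Fern\'andez), and a lower bound $M(\beta_c,h)\gtrsim h^{1/3}$ up to logarithms.

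In the thermal regime, I would use that GHS makes $h\mapsto\chi(\beta,h)$ nonincreasing. The idea is to compare $\chi(\beta,h)$ with $\chi(\beta,h')$ at the smallest field $h'$ still in the thermal regime, together with $\chi(\beta,0)\gtrsim(\beta_c-\beta)^{-1}$. However, this comparison runs the wrong way: monotonicity gives $\chi(\beta,h)\le\chi(\beta,0)$, which is an upper bound, not a lower one. Making this step rigorous is the crux, and it is not resolved here.
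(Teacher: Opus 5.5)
Your $d>4$ argument and your $d=4$ upper bound match the paper: sum the pointwise bounds of Theorems~\ref{thm:main}--\ref{thm:main bis}, and fall back on Theorem~\ref{thm: susceptibility} for the $d=4$ upper bound. The paper notes just before the corollary that the upper bounds were already known. As you observe, summing \eqref{eq:nc up bound thm bis} only yields $\ell^2(\log\ell)^{4+2\varepsilon}$. Your $d>4$ lower bound $\gtrsim \ell^2$ is stronger than the stated $c\,\ell$, which is presumably a typo for $c\,\ell^2$. One correction: nonnegativity of $\langle\sigma_0;\sigma_x\rangle_{\beta,h}$ comes from Griffiths' second inequality (Proposition~\ref{prop:secondineqGriffiths}), not from GHS.

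For the $d=4$ lower bound, your computation is correct, and it points to a problem with the statement rather than with your reasoning. The paper's proof is the same one-line summation. Since \eqref{eq:main lower four intro} is only available for $|x|\le R:=c\,\ell(\log\ell)^{-2}$, that summation gives $R^2/\log R\asymp \ell^2(\log\ell)^{-5}$. The exponent $-1$ would come out only if the lower bound held up to $|x|\le c\,\ell$, which it does not. So with the paper's inputs, what is actually proved in $d=4$ is $\chi(\beta,h)\ge c\,\ell^2(\log\ell)^{-5}$.

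Your proposed rescue does not close the gap. In the magnetic regime it rests on $\chi(\beta,h)$ being nondecreasing in $\beta$ for $h>0$, and that is not available. The $\beta$-derivative is a sum of Ursell functions $\langle\sigma_0;\sigma_x;\sigma_u\sigma_v\rangle_{\beta,h}$ involving a bond variable, and GHS does not control their sign. In the thermal regime, as you say yourself, GHS monotonicity in $h$ only gives $\chi(\beta,h)\le\chi(\beta,0)$. The claimed $(\log\ell)^{-1}$ lower bound in $d=4$ is therefore unproved, both in your proposal and in the paper's argument. You should either state the bound with $(\log\ell)^{-5}$ or supply genuinely new input.
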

\begin{proof} The result follows immediately after plugging the bounds of Theorems \ref{thm:main} and \ref{thm:main bis} in the definition of $\chi(\beta,h)$.
\end{proof}

\subsection{Strategy of proof}

We conclude the introduction with a brief description of the main ideas 
of the paper. For simplicity, we restrict to dimensions $d>4$. We first focus on the upper bound, which is the hardest part and the main 
contribution of the paper. The lower bound is derived via a 
differentiation argument described below.

\vspace{4pt}

One natural approach to derive \eqref{eq:main upper intro} is to adapt the argument of \cite{DuminilPanis2024newLB}, which covers the case $h=0$. Let us briefly describe it. The strategy of \cite{DuminilPanis2024newLB} relies on a fundamental result: the \emph{Simon--Lieb inequality} \cite{SimonInequalityIsing1980,LiebImprovementSimonInequality}.

\begin{Lem}[Simon--Lieb inequality]\label{lem:SL} Fix $S\subset \Lambda\subset \mathbb Z^d$ such that $0\in S$. Let $J\in (\mathbb R^+)^{E(\mathbb Z^d)}$. For every $x\in \Lambda$,
\begin{equation}
	\langle \sigma_0\sigma_x\rangle_{\Lambda,J}\leq  \langle \sigma_0\sigma_x\rangle_{S,J}+\sum_{\substack{u\in S\\ v\in \Lambda\setminus S\\u\sim v}}\langle \sigma_0\sigma_u\rangle_{S,J}\,J_{uv} \,\langle \sigma_v\sigma_x\rangle_{\Lambda,J},
\end{equation}
where we recall that we have identified $S$ (resp. $\Lambda$) with the graph $(S,E(S))$ (resp. $(\Lambda,E(\Lambda))$), and where we used the convention that $\langle \sigma_0\sigma_x\rangle_{S,J}=0$ if $x\notin S$.
\end{Lem}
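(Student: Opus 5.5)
I plan to prove the Simon--Lieb inequality through the random current representation, via the switching lemma. The first step is to write
\[
\langle \sigma_0\sigma_x\rangle_{\Lambda,J}=\frac{\sum_{\partial \mathbf n=\{0,x\}} w(\mathbf n)}{\sum_{\partial\mathbf n=\emptyset}w(\mathbf n)},
\qquad w(\mathbf n)=\prod_{e\in E(\Lambda)}\frac{J_e^{\mathbf n_e}}{\mathbf n_e!}.
\]
A current with sources $\{0,x\}$ always contains a path from $0$ to $x$ in its support. Consider the cluster $\mathcal C$ of $0$ in $\mathbf n$ (the cluster with respect to edges carrying $\mathbf n_e>0$). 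Two cases arise. If the whole cluster $\mathcal C$ lies inside $S$, then this case contributes at most $\langle\sigma_0\sigma_x\rangle_{S,J}$; this bound is zero when $x\notin S$, since $x\in\mathcal C$. If $\mathcal C$ exits $S$, I use a first-exit decomposition instead.

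For the first case I will use the standard fact that, conditionally on the cluster of $0$ being contained in $S$, the restriction of $\mathbf n$ to $E(S)$ is a current on $S$ with sources $\{0,x\}$. The remaining configuration lives on edges not touching $\mathcal C$. I plan to compare this directly using the switching lemma in the form
\[
\langle\sigma_0\sigma_x\rangle_{\Lambda}\,\mathbf P^{\emptyset,\emptyset}_{\Lambda,S}\bigl[\,\cdot\,\bigr],
\]
where a current on $\Lambda$ is paired with a current on $S$. Rather than take that route, the cleaner approach is Lieb's original argument, expressed in random-current terms. Starting from $\langle \sigma_0\sigma_x\rangle_\Lambda$, I will use switching, $\sum_{\partial \mathbf n_1=\{0,x\},\,\partial\mathbf n_2=\emptyset}$ on $\Lambda\times S$, and split according to whether $0$ is connected to $\partial$-edges leaving $S$ in $\mathbf n_1+\mathbf n_2$.

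The cleanest version I would actually carry out is the following. Let $\mathbf n$ be a sourceless current on $\Lambda$ with sources $\{0,x\}$. Explore the backbone (the loop-erased ordered path from $0$ to $x$ determined by a fixed ordering of edges). If the backbone stays in $S$, its weight, summed against the rest of the current, is bounded by $\langle\sigma_0\sigma_x\rangle_S$. This follows from the backbone representation together with the monotonicity (Griffiths) bound $Z$-ratio $\le1$, obtained by removing edges outside $S$. Otherwise, let $uv$ be the first edge of the backbone with $u\in S$ and $v\notin S$. The backbone factorises into three parts: a piece $\gamma_1$ from $0$ to $u$ inside $S$, the edge $uv$ (contributing a factor at most $\tanh J_{uv}\le J_{uv}$), and a piece $\gamma_2$ from $v$ to $x$ in $\Lambda$ that avoids the edges already used. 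The backbone weight identity $\rho(\gamma_1\circ uv\circ\gamma_2)=\rho_S(\gamma_1)\,\rho(uv)\,\rho_{\Lambda\setminus\bar\gamma_1}(\gamma_2)$ then gives
\[
\sum\le \sum_{u,v}\sum_{\gamma_1}\rho_S(\gamma_1)\,J_{uv}\,\langle\sigma_v\sigma_x\rangle_{\Lambda\setminus \bar\gamma_1},
\]
where $\bar\gamma_1$ denotes the edges explored so far.

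The next step uses Griffiths' second inequality: $\langle\sigma_v\sigma_x\rangle_{\Lambda\setminus\bar\gamma_1}\le\langle\sigma_v\sigma_x\rangle_\Lambda$, since decreasing couplings decreases correlations. Summing the remaining factor over $\gamma_1$ gives exactly $\langle\sigma_0\sigma_u\rangle_S$, by the backbone representation in $S$.

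The main obstacle is getting the backbone weight factorisation exactly right, in particular the claim that the first piece has weight $\rho_S(\gamma_1)$ computed in $S$ rather than in $\Lambda$. The subtlety is that the explored edges include all the edges adjacent to the path that the ordering rule forced to be "closed", and some of these may leave $S$. I expect to handle this by choosing the exploration to check edges incident to the path. Then the bound $\rho_{\Lambda}(\gamma_1\text{ explored})\le\rho_S(\gamma_1)$ follows, once again by Griffiths monotonicity applied to the ratio of partition functions.
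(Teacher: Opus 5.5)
The paper does not prove this lemma; it quotes it from Simon and Lieb. Your final route, a first-exit decomposition of the backbone, is the argument the paper itself runs for the $K$-analogue, Lemma~\ref{lem: SL for K}. As written, though, one step fails: ``summing the remaining factor over $\gamma_1$ gives exactly $\langle\sigma_0\sigma_u\rangle_S$.''

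In the backbone expansion of $\langle\sigma_0\sigma_u\rangle_S$, the exploration stops at its \emph{first} visit to $u$, so only paths hitting $u$ at their last step appear. Your prefix $\gamma_1$ (the backbone up to its first exit edge $uv$) need not have this form. For a general ordering $\prec$, the backbone can reach $u$, leave along an edge of $E(S)$ preceding $uv$, travel around a cycle of $S$ back to $u$, and only then step to $v$. Such $\gamma_1$ have $\rho_S(\gamma_1)>0$ by \eqref{eq:formula backbone weight}, but they are not backbones for the source pair $\{0,u\}$. The sum over prefixes can then strictly exceed $\langle\sigma_0\sigma_u\rangle_S$. For example, suppose $u$ is the only neighbour of $0$ in $S$ and lies on a cycle $\lambda$ of $S$, with the cycle edges preceding $uv$ at $u$. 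Then $\langle\sigma_0\sigma_u\rangle_S=\tanh J_{0u}=\rho_S((0u))$, while your sum also contains $\rho_S((0u)\circ\lambda)>0$. So your chain of inequalities does not produce the coefficient $\langle\sigma_0\sigma_u\rangle_S$.

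The repair uses the fact that the left-hand side does not depend on $\prec$. Choose $\prec$ so that, at every $u\in S$, the edges $uv$ with $v\in\Lambda\setminus S$ precede all edges of $E(S)$ at $u$. At the backbone's first visit to $u$, either it exits $S$ immediately, or all edges from $u$ to $\Lambda\setminus S$ become explored (and even) and can never be used later. Hence $\gamma_1$ visits $u$ only at its endpoint and is a genuine backbone from $0$ to $u$. This is the same device as Aizenman--Fern\'andez's convention of exploring the ghost edge first.

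With this ordering the argument goes through:
\begin{itemize}
\item Write $\rho_\Lambda(\gamma)=\rho_\Lambda(\gamma_1)\,\rho_{\Lambda\setminus\overline{\gamma_1}}((uv))\,\rho_{\Lambda\setminus\overline{\gamma_1\circ(uv)}}(\gamma_2)$ using \eqref{eq:propbackb2}. This is an identity with $\rho_\Lambda(\gamma_1)$, not $\rho_S(\gamma_1)$.
\item Bound the middle factor by $\tanh J_{uv}\le J_{uv}$.
\item Bound the $\gamma_2$-sum by $\langle\sigma_v\sigma_x\rangle_{\Lambda\setminus\overline{\gamma_1\circ(uv)}}\le\langle\sigma_v\sigma_x\rangle_\Lambda$, using Griffiths' inequality.
\item Use $\rho_\Lambda(\gamma_1)\le\rho_S(\gamma_1)$, which is \eqref{eq:propbackb1} with $\mathsf E=E(\Lambda)\setminus E(S)$. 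This already settles the obstacle you flagged about explored edges leaving $S$; no special exploration rule is needed for it.
\item Summing over the now-genuine backbones $\gamma_1$ gives at most $\langle\sigma_0\sigma_u\rangle_S$.
\end{itemize}
The case where the backbone never leaves $S$ is handled by the same use of \eqref{eq:propbackb1}.
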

A natural quantity emerges from the above statement: if $S\subset \mathbb Z^d$ contains $0$, define
\begin{equation}\label{eq: def phi}
	\varphi_J(S):= \sum_{\substack{u\in S\\ v\notin S\\u\sim v}}\langle \sigma_0\sigma_u\rangle_{S,J}\,J_{uv}.
\end{equation}
Let $\beta<\beta_c$. An easy corollary of (iterations of) Lemma \ref{lem:SL} is that, if there exists $S$ finite and containing $0$ such that $\varphi_\beta(S)< \tfrac{1}{2}$, then the following holds: letting $n:=\textup{diam}(S)$ and $k:=\lfloor \tfrac{|x|}{(n+1)}\rfloor$, one has $c,C>0$ (independent of $x$) such that 
\begin{equation}
	\langle \sigma_0\sigma_x\rangle_{\beta}\leq C2^{-k}.
\end{equation}
Such a mechanism is central in the alternative proof of \emph{sharpness} of the phase transition of Duminil-Copin and Tassion \cite{DuminilTassionNewProofSharpness2016}. From the above, we obtain that the correlation length at $\beta$ is \emph{at most} of order $n$. In \cite{DuminilPanis2024newLB}, it is shown that one can pick $n\lesssim (\beta_c-\beta)^{-1/2}$ above. This is done by proving a new correlation inequality, see \cite[Theorem~1.2]{DuminilPanis2024newLB}. The polynomial prefactor of the correct order can be obtained with a mild adaptation of this argument. In the setting $h>0$, it is natural to ask whether a version of Lemma~\ref{lem:SL} holds with truncated two-point functions everywhere. Such a result is not known (though a partial statement 
in this direction was obtained in~\cite{graham1982correlation}). In fact, 
\cite{BakerBessis1981} provides strong evidence it should not hold in general.

Nevertheless, the above strategy can still be used in the magnetised regime. The idea is to analyse a quantity which differs slightly from the truncated two-point function. In \cite{AizenmanFernandezCriticalBehaviorMagnetization1986}, the authors use the \emph{random current expansion} of the model, or more specifically the associated \emph{backbone expansion} (see Section \ref{sec:backbone and stuff}) to define, for every $\beta,h\geq 0$, a two-point function $K_{\beta,h}$ such that, for every $x\in \mathbb Z^d$,
\begin{equation}\label{eq:inequality K intrp}
	\langle \sigma_0;\sigma_x\rangle_{\beta,h}\leq K_{\beta,h}(0,x)\leq \langle \sigma_0\sigma_x\rangle_{\beta}.
\end{equation}
The upshot of this replacement is that a version of Lemma \ref{lem:SL} holds true for $K$. Moreover, we do not lose too much information with this bound as the truncated two-point function and $K$ are comparable on average, see Remark \ref{rem: K and truncated}. The definition and the main properties of $K$ are presented in Section \ref{sec:2pt K}; the Simon--Lieb-type inequality it satisfies is described in Section \ref{sec:SL for K}. From $K$, one may define a quantity $\Psi_{E^c,\beta,h}(S)$, parametrised by some $E\subset E(\mathbb Z^d)$, and whose precise definition can be found in Lemma \ref{lem: SL for K}. The quantity $\overline{\Psi}_{\beta,h}(S):=\sup_{E\subset E(\mathbb Z^d)}\Psi_{E^c,\beta,h}(S)$ is the natural analogue of $\varphi_{\beta}(S)$. The supremum is present here for technical reasons which are explained in Remark \ref{rem:comp psi phi} (in words, $K$ does not satisfy any monotonicity property in the volume). As stated in Proposition \ref{prop:sufficient condition to exp decay}, the existence of $S\ni 0$ finite such that $\overline{\Psi}_{\beta,h}(S)< \tfrac{1}{2}$ yields an upper bound on the correlation length of $K$---and therefore on $\xi(\beta,h)$ by \eqref{eq:inequality K intrp}---of order $\textup{diam}(S)$. The remaining task is therefore to find such a set $S$. Our approach is as follows: we start from \cite{DuminilPanis2024newLB} which gives such a result when $h=0$ and extend it to the regime $h>0$ via an 
\emph{extrapolation} argument inspired by \cite{AizenmanFernandezCriticalBehaviorMagnetization1986}. 

As we explain in more detail below, in the thermal regime $\ell(\beta,h)=(\beta_c-\beta)^{-1/2}$, the bound \eqref{eq:main upper intro} follows essentially from \cite{DuminilPanis2024newLB} (see Proposition \ref{prop:trunc vs K vs untrunc} and Theorem \ref{thm:sharplength}). The most challenging case is therefore the magnetic regime $\ell(\beta,h)=h^{-1/3}$. To simplify the exposition, we work at $(\beta_c,h)$ with $h>0$.

The ansatz~\eqref{eq:expected behaviour in the near-critical regime} suggests
that the truncated two-point function remains ``stable'' along level sets of
the correlation length $\xi(\beta,h)$ in $\mathcal Q$. It is reasonable to expect a similar behaviour for other 
observables of the model. In particular, $\overline{\Psi}_{\beta_c,h}(S)$ 
should be comparable to $\overline{\Psi}_{\beta(h),0}(S)$ for a 
well-chosen $\beta(h)<\beta_c$:
\begin{equation}\label{eq:expl comparison to h=0 0}
	\overline{\Psi}_{\beta_c,h}(S)\leq \overline{\Psi}_{\beta(h),0}(S)+\varepsilon,
\end{equation}
where $\varepsilon$ is a small \emph{error} term. If $\xi(\beta,h)\asymp \ell(\beta,h)$, the above heuristic forces us to choose $\beta(h)$ such that $\ell(\beta(h),0)\asymp\ell(\beta_c,h)$, which gives
\begin{equation}
    \beta(h)=\beta_c-ch^{2/3},
\end{equation}
for some $c>0$ to be fixed. For now, it is unclear how $c$ relates to $\varepsilon$ in \eqref{eq:expl comparison to h=0 0}. From the result of \cite{DuminilPanis2024newLB} described above, one can find a finite 
set $S\ni 0$ of diameter $\lesssim(\beta_c-\beta(h))^{-1/2}\asymp h^{-1/3}=\ell(\beta_c,h)$ 
with $\varphi_{\beta(h)}(S)<\tfrac{1}{2}$. As a result, if $\varepsilon$ is small enough,
\begin{equation}\label{eq:expl comparison to h=0 1}
    \overline{\Psi}_{\beta_c,h}(S)
    \leq\overline{\Psi}_{\beta(h),0}(S)+\varepsilon
    \leq\varphi_{\beta(h)}(S)+\varepsilon
    <\frac{1}{2},
\end{equation}
where the second inequality follows from the appropriate generalisation of \eqref{eq:inequality K intrp} (see Proposition \ref{prop:trunc vs K vs untrunc}). As claimed above (and proved in 
Proposition~\ref{prop:sufficient condition to exp decay}), this suffices 
to establish the upper bound in Theorem~\ref{thm:main}.

It remains to prove~\eqref{eq:expl comparison to h=0 1}. For this, we are inspired by \cite{AizenmanFernandezCriticalBehaviorMagnetization1986}. In that paper, Aizenman and Fern\'andez prove that, in dimensions $d>4$,
\begin{equation}
	M(\beta_c,h)=\langle \sigma_0\rangle_{\beta_c,h}\lesssim h^{1/3}.
\end{equation}
They then bound $m^*(\beta)$ (for $\beta>\beta_c$) by $M(\beta_c,h(\beta))$ for a well-chosen $h(\beta)>0$. More precisely, the authors of \cite{AizenmanFernandezCriticalBehaviorMagnetization1986} construct interpolations $\boldsymbol{\beta}:[0,1]\rightarrow \mathbb R^+$, with $\boldsymbol{\beta}(1)=\beta$ and $\boldsymbol{\beta}(0)=\beta_c$, and $\boldsymbol{h}:[0,1]\rightarrow \mathbb R^+$ with\footnote{By this, we 
mean that we set $\boldsymbol{h}(1)=\kappa$ and send $\kappa$ to $0$.}  $\boldsymbol{h}(1)=0^+$ and $\boldsymbol{h}(0)=h(\beta)$ such that
\begin{equation}\label{eq:expl comparison to h=0 2}
	\frac{\partial}{\partial t}M(\boldsymbol{\beta}(t),\boldsymbol{h}(t))\leq 0,
\end{equation} 
thereby proving that 
$M(\beta,0^+)=m^*(\beta)\leq M(\beta_c,h(\beta))$. The key input in the proof of \eqref{eq:expl comparison to h=0 2} is the following inequality (which is a consequence of \cite{GriffithsHurstShermanConcavity1970}) relating $\beta$- and $h$-derivatives of $M(\beta,h)$:
\begin{equation}\label{eq:expl comparison to h=0 2.5}
	\frac{\partial}{\partial \beta}M(\beta,h)\leq 2d M(\beta,h)\frac{\partial}{\partial h}M(\beta,h).
\end{equation}

It is natural to use a similar argument to compare $\overline{\Psi}_{\beta_c,h}(S)$ to 
$\overline{\Psi}_{\beta(h),0}(S)\leq\varphi_{\beta(h)}(S)$. However, for this more intricate quantity, we lack an analogue of \eqref{eq:expl comparison to h=0 2.5}, which makes it much harder to find an interpolation from $(\beta(h),0)$ to $(\beta_c,h)$ along which the derivative remains negative, as in \eqref{eq:expl comparison to h=0 2}. Nevertheless, we can still proceed as follows. Setting 
$\boldsymbol{\beta}(t):=\beta(h)+t(\beta_c-\beta(h))$ and applying the 
fundamental theorem of calculus gives
\begin{equation}\label{eq:expl comparison to h=0 3}
    \overline{\Psi}_{\beta_c,h}(S)
    =\overline{\Psi}_{\beta(h),h}(S)
    +\int_0^1\frac{\partial}{\partial t}
    \overline{\Psi}_{\boldsymbol{\beta}(t),h}(S)\,\mathrm{d}t
    \leq\varphi_{\beta(h)}(S)
    +\int_0^1\frac{\partial}{\partial t}
    \overline{\Psi}_{\boldsymbol{\beta}(t),h}(S)\,\mathrm{d}t,
\end{equation}
where we omitted the technical difficulty coming from the potential non-differentiability of $\overline{\Psi}$. The 
integral in~\eqref{eq:expl comparison to h=0 3} is then analysed using a (new) bound on the $\beta$-derivative of $K$, see Proposition \ref{prop: derivative K}. We leave the technical details for later and record only that the derivative can be bounded by
\begin{equation}
    C_1(d)(\beta_c-\beta(h))\sup_{E\subset E(\mathbb{Z}^d)}
    \sup_{t\in[0,1]}\sum_{u\in S}
    K_{E^c,\boldsymbol{\beta}(t),h}(0,u)\,\varphi_{\beta_c}(S-u).
\end{equation}
As in the case of Bernoulli percolation in dimensions $d>6$ (see 
\cite{panis2026reversing}), one expects that there exists $C_2(d)>0$ such that
\begin{equation}\label{eq:expl comparison to h=0 4}
	\max_{u\in S}\varphi_{\beta_c}(S-u)\leq C_2(d).
\end{equation}
Assuming such a bound holds, we can again rely on \cite{AizenmanFernandezCriticalBehaviorMagnetization1986} (see Proposition \ref{prop: sum of K}) to obtain that
\begin{equation}
	\sup_{E\subset E(\mathbb{Z}^d)}
    \sup_{t\in[0,1]}\sum_{u\in S}
    K_{E^c,\boldsymbol{\beta}(t),h}(0,u)\lesssim h^{-1}M(\beta_c,h)\lesssim h^{-2/3}.
\end{equation}
Therefore, the integral 
in~\eqref{eq:expl comparison to h=0 3} is bounded by
\begin{equation}
    C_3(d)(\beta_c-\beta(h))h^{-2/3}\leq C_3(d) c,
\end{equation}
which can be made arbitrarily small by choosing $c$ small 
enough.

There is, however, one fundamental caveat in the above argument: \eqref{eq:expl comparison to h=0 4} is not known for the Ising model.
We circumvent 
this difficulty by replacing the deterministic set $S$ by a well-chosen \emph{random} set 
$\mathcal{S}$ for which the relevant estimates---in particular \eqref{eq:expl comparison to h=0 4}---hold in an 
averaged sense (see Corollary \ref{coro:bound translate random s_n}). This motivates the generalisation of Proposition \ref{prop:sufficient condition to exp decay} presented in 
Proposition~\ref{prop:sufficient condition to exp decay random}. The 
random set $\mathcal{S}$ is the same as the one considered in \cite{DuminilPanis2024newLB}. It is constructed from a \emph{folded} single 
sourceless random current on $\mathbb{Z}^d$, following an approach 
introduced in \cite{AizenmanDuminilTassionWarzelEmergentPlanarity2019}.

\begin{Rem}
It may seem surprising to change only $\beta$ and not $h$. One might hope 
that decreasing $h$ as well could yield a better bound on the integral 
in~\eqref{eq:expl comparison to h=0 3}. This is not necessary: since 
$\ell(\beta(h),h)\asymp\ell(\beta(h),0)$, the field $h$ is not expected 
to affect the mass beyond a multiplicative constant. Removing $h$ 
via~\eqref{eq:inequality K intrp} is therefore harmless for our purposes.
\end{Rem}
 

Finally, the proof of \eqref{eq:main lower intro} uses a differentiation argument which leverages estimates from \cite{AizenmanFernandezCriticalBehaviorMagnetization1986,DuminilPanis2024newLB}. By \cite{AizenmanFernandezCriticalBehaviorMagnetization1986}, it is possible to bound $|\partial_h\langle \sigma_0;\sigma_x\rangle_{\beta,h}|$ in terms of a diagram involving the two-point function $K$ and the magnetisation $M(\beta,h)$, see \eqref{eq: lb on derivative in h of 2pt}. The latter quantity is then controlled using known critical estimates, which we recall in Section \ref{sec:preliminaries}. 

\hfill

\noindent\textbf{Organisation of the paper.} In Section \ref{sec:preliminaries}, we recall useful results regarding the Ising model. The central tool of this paper---the two-point function $K$---is introduced in Section \ref{sec:backbone and stuff}, after recalling the random current and backbone expansions of the Ising model. In Section \ref{sec:SL for K}, we prove a version of the Simon--Lieb inequality for $K$. In Section \ref{sec:extrapolation}, we implement the extrapolation argument described above. In particular, we introduce a random set $\mathcal S_n\subset \Lambda_n$ which satisfies a set of ``good'' properties, see Proposition \ref{prop:properties of the random set}. The proof of the latter result is completed in Section \ref{sec: properties of random set}. Finally, in Section \ref{sec:lower bounds}, we prove the lower bounds \eqref{eq:main lower intro} and \eqref{eq:main lower four intro}.

\hfill

\noindent \textbf{Acknowledgements.} We thank Frederik Ravn Klausen, S\'ebastien Ott, Franco Severo, and Peter Wildemann for stimulating discussions. We also thank Gordon Slade for useful comments leading to the scaling theory discussion above. We acknowledge support from the Swiss National Science Foundation through a Postdoc.Mobility grant.

\hfill

\noindent\textbf{Statement of AI use.} We have used a combination ChatGPT~5.6 Plus and Claude Sonnet 4.6 to assist with the English writing and to identify minor 
errors and typos. They have not been used to generate ideas or proofs, or to 
suggest references.

\section{Preliminaries}\label{sec:preliminaries}
In this section, we recall useful results regarding the high-dimensional Ising model. Our main inputs come from \cite{AizenmanFernandezCriticalBehaviorMagnetization1986} and \cite{DuminilPanis2024newLB}. Recall that $\mathcal Q=\{(\beta,h): 0\leq\beta\leq \beta_c,\: h\geq 0\}$, and that for $(\beta,h)\in \mathcal Q^*$, 
$\ell(\beta,h)=(\beta_c-\beta)^{-1/2}\wedge h^{-1/3}=\ell(\beta,0)\wedge \ell(\beta_c,h)$.

\subsection{Correlation inequalities}

We begin by stating classical correlation inequalities. For a set $A\subset \mathbb Z^d$, we let 
\begin{equation}
\sigma_A:=\prod_{x\in A}\sigma_x.
\end{equation} 

\begin{Prop}[\hspace{1pt}{\cite{GriffithsCorrelationsIsing1-1967}}]\label{prop:secondineqGriffiths} Let $d\geq 2$ and $G=(V,E)$ be a subgraph of $\mathbb Z^d$. For every $J\in (\mathbb R^+)^{E}$, every $h\geq 0$, and every $A,B\subset V$,
\begin{equation}
	\langle \sigma_A;\sigma_B\rangle_{G,J,h}:=\langle \sigma_A\sigma_B\rangle_{G,J,h}-\langle \sigma_A\rangle_{G,J,h}\langle \sigma_B\rangle_{G,J,h}
\geq 0.
\end{equation}
\end{Prop}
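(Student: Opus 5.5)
The plan is to prove the second Griffiths inequality by Ginibre's duplicate-variable trick. By taking $J_{xy}\sigma_x\tau_y$ as an extra single-site field term, boundary conditions $\tau$ with nonnegative entries reduce to a nonnegative external field. So it suffices to treat a finite graph $G=(V,E)$ with a Hamiltonian of the form $\sum_{xy}J_{xy}\sigma_x\sigma_y+\sum_x h_x\sigma_x$ with all $J_{xy},h_x\ge 0$. (The infinite-volume statement then follows by weak limits.)

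Introduce an independent copy $\sigma'$ of $\sigma$ under the same measure. Write
\begin{equation*}
2\langle\sigma_A;\sigma_B\rangle
=\big\langle(\sigma_A-\sigma'_A)(\sigma_B-\sigma'_B)\big\rangle_{\otimes},
\end{equation*}
where $\langle\cdot\rangle_{\otimes}$ is the product measure. Its unnormalised weight is $\exp\big(\sum_{xy}J_{xy}(\sigma_x\sigma_y+\sigma'_x\sigma'_y)+\sum_x h_x(\sigma_x+\sigma'_x)\big)$. Since the partition function is positive, it suffices to show that the unnormalised sum is nonnegative.

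Change variables to $\sigma'_x=\sigma_x t_x$, with $t_x\in\{\pm1\}$. Then:
\begin{itemize}
\item $\sigma_x\sigma_y+\sigma'_x\sigma'_y=\sigma_x\sigma_y(1+t_xt_y)$;
\item $\sigma_x+\sigma'_x=\sigma_x(1+t_x)$;
\item $\sigma_A-\sigma'_A=\sigma_A(1-t_A)$, and similarly for $B$.
\end{itemize}
Expand the exponential in a power series. Every term is then a product of $\sigma$-monomials times the nonnegative coefficients $J_{xy}^k(1+t_xt_y)^k/k!$ and $h_x^k(1+t_x)^k/k!$, times $(1-t_A)(1-t_B)$. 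For each fixed $t$, all these coefficients are nonnegative, since $1\pm t\ge 0$. Summing over $\sigma\in\{\pm1\}^V$ a monomial $\sigma_C$ gives $2^{|V|}\mathds 1_{C=\emptyset}\ge 0$. So each term is nonnegative. Summing over $t$ preserves this.

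The only delicate point, though routine, is bookkeeping the change of variables: it is a bijection of $\{\pm1\}^V\times\{\pm1\}^V$, and after it every coefficient in the expansion is visibly a product of nonnegative factors. No real obstacle is expected.
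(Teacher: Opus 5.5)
The paper gives no proof of its own here and simply quotes the classical result. Your duplicate-variable argument, with $t_x=\sigma_x\sigma'_x$ reducing each fixed-$t$ sum to a first-Griffiths-type expansion with nonnegative couplings $J_{xy}(1+t_xt_y)$ and fields $h(1+t_x)$, is the standard (Ginibre) proof and is correct; for infinite $G$ you only need $A,B$ finite and the passage to the limit of finite-volume measures, which is not circular since the existence of that limit is itself derived from the finite-volume inequality.
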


We now state a fundamental monotonicity result. Its proof can be found in \cite[Chapter~3]{FriedliVelenikIntroStatMech2017}. Later, we will refer to it as \emph{Griffiths' inequality}.

\begin{Prop}[\hspace{1pt}{\cite{GriffithsCorrelationsIsing1-1967,kelly1968general}}]\label{prop:consequencegriffiths} Let $d\geq 2$ and $G=(V,E)$, $H=(V',E')$ be two subgraphs of $\mathbb Z^d$ such that $G\subset H$. For every $A\subset V$, every $J\in (\mathbb R^+)^E$, $J'\in (\mathbb R^+)^{E'}$ satisfying $J_e\leq J_e'$ for every $e\in E$, and every $h,h'\geq 0$ satisfying $h\leq h'$, one has
\begin{equation}\label{eq:monot Griffiths}
	\langle \sigma_A\rangle_{G,J,h}\leq \langle \sigma_A\rangle_{H,J',h'}.
\end{equation}
\end{Prop}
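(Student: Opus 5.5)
The statement to prove is Griffiths' monotonicity inequality (Proposition~\ref{prop:consequencegriffiths}). The plan is to reduce it to the positivity of truncated correlations (Proposition~\ref{prop:secondineqGriffiths}) by differentiating along a path of parameters.

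\textbf{Step 1: embed $G$ into $H$.} Extend $J$ to $E'$ by setting $J_e=0$ for $e\in E'\setminus E$. Then $\langle\cdot\rangle_{G,J,h}$ (free boundary, $\tau\equiv 0$) is the marginal on $\{-1,1\}^V$ of the measure on $H$ with couplings $J$, and with field $h$ on $V$ and field $0$ on $V'\setminus V$. Indeed, with zero couplings and zero field, the spins in $V'\setminus V$ are independent uniform and do not interact with $V$. Since $\sigma_A$ depends only on spins in $A\subset V$, its expectation is unchanged. It therefore suffices to compare two measures on the same graph $H$: one with couplings $J\le J'$ and site-dependent fields $h_x\le h'_x$ (taking $h_x=h\mathds 1_{x\in V}$ and $h'_x=h'$), the other with couplings $J'$ and fields $h'$. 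Proposition~\ref{prop:secondineqGriffiths} should be read as holding for nonnegative site-dependent fields, which follows from the same proof.

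\textbf{Step 2: interpolate.} For $t\in[0,1]$, set $J(t)=J+t(J'-J)$ and $h_x(t)=h_x+t(h'_x-h_x)$. All of these are nonnegative. For finite $H$, the map $t\mapsto\langle\sigma_A\rangle_{H,J(t),h(t)}$ is smooth, and differentiating the Gibbs weight gives
\begin{equation*}
\frac{d}{dt}\langle\sigma_A\rangle=\sum_{xy\in E'}(J'_{xy}-J_{xy})\langle\sigma_A;\sigma_x\sigma_y\rangle+\sum_{x\in V'}(h'_x-h_x)\langle\sigma_A;\sigma_x\rangle .
\end{equation*}
Each coefficient is nonnegative, and each truncated correlation is nonnegative by Proposition~\ref{prop:secondineqGriffiths} (applied with $B=\{x,y\}$ or $B=\{x\}$). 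Integrating from $t=0$ to $t=1$ gives the inequality.

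\textbf{Step 3: infinite graphs.} If $G$ or $H$ is infinite, the correlations are defined as monotone weak limits over finite subgraphs. The inequality passes to the limit: approximate $G\subset H$ by finite $G_n\subset H_n$ with $G_n\subset H_n$ for each $n$.

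The only genuine input is Proposition~\ref{prop:secondineqGriffiths}. The step that needs care is Step 1, namely checking that adding zero-weight vertices and edges leaves the free-boundary correlations unchanged; the rest is routine.
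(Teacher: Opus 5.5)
Your argument is correct, and it is the standard proof behind the citation given here; the paper supplies no proof of its own and refers to Friedli--Velenik, Ch.~3, and Kelly--Sherman. That proof runs exactly as yours does: realise the free measure on $G$ as the measure on $H$ with the couplings on $E'\setminus E$ set to zero, differentiate along a linear interpolation of the parameters, apply the second Griffiths inequality, and pass to infinite volume through monotone limits. One minor simplification: once those couplings vanish, the spins in $V'\setminus V$ are decoupled, so you may put the uniform field $h$ on all of $V'$, and then Proposition~\ref{prop:secondineqGriffiths} applies exactly as stated, with no need for its site-dependent extension.
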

We conclude with a useful bound on the truncated two-point function. We provide a more precise result in Proposition \ref{prop:trunc vs K vs untrunc} below. A stronger inequality, allowing $h$ to take negative values, has recently been derived in \cite{Ding2023new}.
\begin{Prop}\label{prop: trunc is bounded by h=0} Let $d\geq 2$ and $G=(V,E)$ be a subgraph of $\mathbb Z^d$. For every $J\in (\mathbb R^+)^E$, every $h\geq 0$, and every $x,y\in V$, one has
\begin{equation}
	\langle \sigma_x;\sigma_y\rangle_{G,J,h}\leq \langle \sigma_x\sigma_y\rangle_{G,J}.
\end{equation}
\end{Prop}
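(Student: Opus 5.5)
Since $\langle \sigma_x;\sigma_y\rangle_{G,J,0}=\langle\sigma_x\sigma_y\rangle_{G,J}$, it suffices to show that $h\mapsto \langle \sigma_x;\sigma_y\rangle_{G,J,h}$ is non-increasing on $[0,\infty)$. For infinite $G$ we first argue on finite subgraphs and then pass to the weak limit: both sides are limits of their finite-volume counterparts, and for $h>0$ uniqueness identifies the limit of the free measures. So assume $G$ is finite.

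Differentiating in $h$ gives
\begin{equation*}
\frac{\partial}{\partial h}\langle \sigma_x;\sigma_y\rangle_{G,J,h}=\sum_{z\in V}u_3(x,y,z),
\end{equation*}
where $u_3(x,y,z)=\langle\sigma_x\sigma_y\sigma_z\rangle-\langle\sigma_x\sigma_y\rangle\langle\sigma_z\rangle-\langle\sigma_x\sigma_z\rangle\langle\sigma_y\rangle-\langle\sigma_y\sigma_z\rangle\langle\sigma_x\rangle+2\langle\sigma_x\rangle\langle\sigma_y\rangle\langle\sigma_z\rangle$ is the third Ursell function (the third joint cumulant) at field $h$. The key input is the Griffiths--Hurst--Sherman (GHS) inequality \cite{GriffithsHurstShermanConcavity1970}: for ferromagnetic pair interactions $J\geq 0$ and non-negative fields, $u_3(x,y,z)\leq 0$. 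Hence the derivative is non-positive, and integrating from $0$ to $h$ gives
\begin{equation*}
\langle \sigma_x;\sigma_y\rangle_{G,J,h}\leq\langle \sigma_x;\sigma_y\rangle_{G,J,0}=\langle\sigma_x\sigma_y\rangle_{G,J}.
\end{equation*}
The identification at $h=0$ uses that $\langle\sigma_x\rangle_{G,J,0}=0$ by the spin-flip symmetry of the free measure. Both sides are smooth in $h$ in finite volume, so the calculus is justified.

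The only substantive step is the GHS inequality. The paper already invokes GHS as the source of \eqref{eq:expl comparison to h=0 2.5}, so it can be cited directly. If a self-contained argument were preferred, one could instead use the random current representation with sources: write the truncated function at field $h$ through the ghost vertex $\mathfrak g$, and compare it with the $h=0$ two-point function via the switching lemma. In that approach the main obstacle is controlling the ghost-connection events.

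The infinite-volume passage is routine. For $h>0$ the free and plus limits coincide, so the finite-volume inequality passes to the limit for both $\langle\sigma_x\sigma_y\rangle$ and the product of one-point functions. Monotonicity in $G$ from Proposition \ref{prop:consequencegriffiths} guarantees that each term converges.
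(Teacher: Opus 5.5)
Your argument is correct, but it follows a different route from the paper. The paper gives no separate proof at this point. The statement is read off from Proposition \ref{prop:trunc vs K vs untrunc}, which rests on the backbone expansion:
\begin{itemize}
\item split $\langle\sigma_x\sigma_y\rangle_{G,J,h}$ into backbones $x\to y$ that avoid $\mathfrak g$, whose total weight is $K_{G,J,h}(x,y)$, and pairs of backbones $x\to\mathfrak g$, $y\to\mathfrak g$;
\item by \eqref{eq:propbackb2} and Griffiths' inequality, the pairs contribute at most $\langle\sigma_x\rangle_{G,J,h}\langle\sigma_y\rangle_{G,J,h}$;
\item \eqref{eq:propbackb1}, applied with $\mathsf E$ the set of ghost edges, gives $\rho_{G,J,h}(\gamma)\leq\rho_{G,J}(\gamma)$ for every ghost-avoiding $\gamma$, hence $K_{G,J,h}(x,y)\leq\langle\sigma_x\sigma_y\rangle_{G,J}$.
\end{itemize}

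You instead differentiate in the uniform field and use GHS to get $\partial_h\langle\sigma_x;\sigma_y\rangle_{G,J,h}=\sum_z u_3(x,y,z)\leq 0$. Once GHS is granted, your route is shorter and proves more: the truncated two-point function is non-increasing in $h$ on all of $[0,\infty)$, not merely dominated by its value at $h=0$. The cost is that it relies on GHS, a more refined inequality than anything the paper needs here. The paper's route uses only Griffiths' inequality and the explicit backbone weights. Its real payoff is the sandwich
\[
\langle\sigma_x;\sigma_y\rangle_{G,J,h}\leq K_{G,J,h}(x,y)\leq\langle\sigma_x\sigma_y\rangle_{G,J}.
\]
The middle quantity $K$ is the object that satisfies the Simon--Lieb-type inequality of Lemma \ref{lem: SL for K} and drives the rest of the paper, so the proposition comes for free.

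One small point on your infinite-volume step: you do not need uniqueness at $h>0$. Monotonicity in the volume (Proposition \ref{prop:consequencegriffiths}) already gives convergence of $\langle\sigma_x\sigma_y\rangle$, $\langle\sigma_x\rangle$ and $\langle\sigma_y\rangle$ along increasing free finite-volume approximations, which is how the infinite-volume quantities are defined. The finite-volume inequality then passes directly to the limit.
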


\subsection{Two-point function and sharp length estimates}

We now turn to results regarding the model's two-point function at $h=0$. It is a classical consequence of the infrared bound and the Messager--Miracle-Sol\'e inequalities \cite{FrohlichSimonSpencerIRBounds1976,FrohlichIsraelLiebSimon1978,MessagerMiracleSoleInequalityIsing} that, for every $d\geq 3$, there exists $C>0$ such that, for every $x\in \mathbb Z^d$,
\begin{equation}\label{eq:IRB}
	\langle \sigma_0\sigma_x\rangle_{\beta_c}\leq \frac{C}{1\vee |x|^{d-2}}.
\end{equation}
In dimensions $d\geq 4$, this upper bound is expected to be \emph{sharp}, in the sense that a matching order lower bound should hold, see \cite{SladeTombergRGWeakPhi4in2016}. This result was recently obtained in dimensions $d>4$ in \cite{DuminilPanis2024newLB} (see also \cite{Sakai2007LaceExpIsing} for $d\gg 4$). In the same paper, the authors obtained \emph{near-critical bounds} on the two-point function $\langle \sigma_0\sigma_x\rangle_{\beta}$. These results extend to the marginal dimension $d=4$ up to appropriate logarithmic corrections. 

\begin{Thm}[\hspace{1pt}{\cite{DuminilPanis2024newLB}}]\label{thm:bounds 2pt d>4} Let $d>4$. There exist $c,C>0$ such that the following holds. For every $\beta\in [0,\beta_c]$ and every $x\in \mathbb Z^d$,
\begin{equation}
	\langle \sigma_0\sigma_x\rangle_{\beta}\leq \frac{C}{1\vee |x|^{d-2}}\exp\left(-c\frac{|x|}{\ell(\beta,0)}\right).
\end{equation}
Moreover, if $|x|\leq c\,\ell(\beta,0)$,
\begin{equation}
	\langle \sigma_0\sigma_x\rangle_{\beta}\geq \frac{c}{1\vee |x|^{d-2}}.
\end{equation}
\end{Thm}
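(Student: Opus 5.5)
The statement is Theorem~\ref{thm:bounds 2pt d>4}, which the paper quotes from \cite{DuminilPanis2024newLB}; here is how I would reprove it from the tools recalled so far. I would handle the upper and lower bounds separately, both in the regime $\beta\le\beta_c$, $h=0$.

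\emph{Upper bound.} The strategy is the Simon--Lieb mechanism (Lemma~\ref{lem:SL}). The goal is a finite set $S\ni 0$ with $\textup{diam}(S)\lesssim \ell(\beta,0)=(\beta_c-\beta)^{-1/2}$ and $\varphi_\beta(S)<\tfrac12$.
\begin{itemize}
\item First I would show that at criticality $\varphi_{\beta_c}(\Lambda_n)$ is of order $1$ on average over scales. This uses the infrared bound \eqref{eq:IRB} and the standard identity expressing $\varphi$ as a boundary flux.
\item Next I would bound the $\beta$-derivative of $\varphi_\beta(S)$. Differentiating $\langle\sigma_0\sigma_u\rangle_{S,\beta}$ gives a sum of truncated four-point functions. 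These are bounded, via the random current switching lemma or Lebowitz-type inequalities, by $\sum_{y,z}\langle\sigma_0\sigma_y\rangle\langle\sigma_z\sigma_u\rangle$ over neighbouring pairs $yz$.
\item Combining the two, I would aim for $\varphi_{\beta_c}(S)-\varphi_\beta(S)\ge c(\beta_c-\beta)\,n^2$ for suitable $S$ of size $n$. This is the key new correlation inequality of \cite{DuminilPanis2024newLB}: a \emph{lower} bound on the derivative, requiring the two-point function to be of order $|x|^{2-d}$.
\item Taking $n=C(\beta_c-\beta)^{-1/2}$ then forces $\varphi_\beta(S)<\tfrac12$.
\end{itemize}

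Because \eqref{eq:expl comparison to h=0 4}-type bounds are not available for deterministic sets, I would follow the random-set approach. Take $\mathcal S$ built from a folded sourceless random current, for which averaged bounds hold, and then iterate Simon--Lieb to get $\langle\sigma_0\sigma_x\rangle_\beta\le C2^{-|x|/(n+1)}$.

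To recover the polynomial prefactor I would use a two-step argument. For $|x|\le n$, use \eqref{eq:IRB} directly. For $|x|>n$, apply Simon--Lieb once with $S=\Lambda_{|x|/2}$ around $x$, combining the exponential decay with the $|x|^{2-d}$ bound on the intermediate scales.

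\emph{Lower bound.} The main obstacle is the critical lower bound $\langle\sigma_0\sigma_x\rangle_{\beta_c}\gtrsim|x|^{2-d}$ for $d>4$. I would try to prove it through the same $\varphi$-analysis. If the two-point function were much smaller than $|x|^{2-d}$ on a scale, the four-point bound would make $\varphi_{\beta_c}$ too small there, contradicting $\xi(\beta_c)=\infty$ via the Simon--Lieb argument; $d>4$ ensures the bubble-type error terms are summable.

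Once the critical bound is available, I would pass to $\beta<\beta_c$ by integrating the derivative bound from $\beta$ to $\beta_c$:
\[
\langle\sigma_0\sigma_x\rangle_{\beta_c}-\langle\sigma_0\sigma_x\rangle_\beta\lesssim(\beta_c-\beta)\sum_{y}|y|^{2-d}|x-y|^{2-d}\lesssim(\beta_c-\beta)|x|^{4-d}.
\]
This is at most half of $c|x|^{2-d}$ whenever $|x|\le c\,\ell(\beta,0)$, which gives the stated lower bound.
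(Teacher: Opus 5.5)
\textbf{Verdict: there is a genuine gap.} The paper does not reprove this theorem; it quotes it from \cite{DuminilPanis2024newLB}, whose ingredients reappear in Sections~\ref{sec:extrapolation}--\ref{sec: properties of random set}. Your skeleton is right: find a set with small $\varphi_\beta$ at scale $\ell(\beta,0)$, then iterate Lemma~\ref{lem:SL}. But the two places where the real difficulty lies are left unproved.

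\textbf{Upper bound.} Your first step is not available from the tools you name. \eqref{eq:IRB} only gives $\varphi_{\beta_c}(\Lambda_n)\lesssim n^{d-1}\cdot n^{2-d}=n$. The Simon--Lieb mechanism only yields the \emph{lower} bound $\varphi_{\beta_c}(S)\ge 1$. The paper also stresses that $O(1)$ upper bounds of this type for deterministic sets are not known (see the discussion around \eqref{eq:expl comparison to h=0 4}).

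Your third step, $\varphi_{\beta_c}(S)-\varphi_\beta(S)\ge c(\beta_c-\beta)n^2$, is asserted rather than derived. It is also not what \cite{DuminilPanis2024newLB} proves: their key input, re-used in Section~\ref{sec: properties of random set} here, is an \emph{upper} bound on $\varphi$ averaged over a random set. Establishing your derivative lower bound would require lower bounds on truncated four-point functions in a finite (random) domain, and lower bounds $\langle\sigma_0\sigma_y\rangle_S\gtrsim|y|^{2-d}$. In other words, it presupposes the critical lower bound, which is the hard part.

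\textbf{Critical lower bound.} That bound rests on a single sentence. You do not say which set, or which inequality, turns a two-point function $\ll n^{2-d}$ at scale $n$ into $\varphi_{\beta_c}(S)<1$. For a deterministic $S$, bounding the boundary terms of $\varphi_{\beta_c}(S)$ by full-space two-point functions loses a factor $n$. That only reproduces Simon's $|x|^{1-d}$.

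\textbf{The missing idea} is the switching bound for the folded-current set $\mathcal S_n$. By (the proof of) Lemma~\ref{lem:random phi moved} with $u=0$ and \eqref{eq:switching where we move 0}, for every $\beta\le\beta_c$,
\begin{equation*}
\mathbf E^\emptyset_{\beta_c}\big[\mathds 1_{0\in\mathcal S_n}\varphi_\beta(\mathcal S_n)\big]\lesssim\sum_{\substack{x,y\in\Lambda_n\\x\sim y}}\Big(\langle\sigma_0\sigma_x\rangle_\beta-\langle\sigma_0\sigma_{\mathcal R_n(x)}\rangle_\beta\Big)\langle\sigma_y\sigma_{\mathcal R_n(y)}\rangle_{\beta_c}.
\end{equation*}
The reflected difference vanishes linearly near $\mathbb H_n$, like a Dirichlet Green function, and this is what recovers the lost factor $n$. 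Run the computation of Corollary~\ref{coro:bound translate random s_n}, but keep $\langle\sigma_0\sigma_{\lfloor n/4\rfloor\mathbf e_1}\rangle_\beta$ in \eqref{eq: gradient estimate} instead of bounding it by \eqref{eq:IRB}. For $d>4$ this bounds the right-hand side by $C\big(n^{4-d}+n^{d-2}\langle\sigma_0\sigma_{\lfloor n/4\rfloor\mathbf e_1}\rangle_\beta\big)$.

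\emph{At $\beta_c$:} every finite $S\ni0$ has $\varphi_{\beta_c}(S)\ge1$ (otherwise iterating Lemma~\ref{lem:SL} would give exponential decay at $\beta_c$). Together with \eqref{eq:prop 1}, this forces $\langle\sigma_0\sigma_{\lfloor n/4\rfloor\mathbf e_1}\rangle_{\beta_c}\gtrsim n^{2-d}$. The Messager--Miracle-Sol\'e inequalities extend this to all $x$.

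\emph{For $\beta<\beta_c$:} no derivative lower bound is needed. By Messager--Miracle-Sol\'e and \cite{AizenmanGeometricAnalysis1982},
$n^{d-2}\langle\sigma_0\sigma_{\lfloor n/4\rfloor\mathbf e_1}\rangle_\beta\lesssim n^{-2}\chi(\beta)\lesssim n^{-2}(\beta_c-\beta)^{-1}$.
So at $n=C(\beta_c-\beta)^{-1/2}$ with $C$ large, some realisation $S\ni 0$ of $\mathcal S_n$ has $\varphi_\beta(S)<\tfrac14$.

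\textbf{What is fine, and one fix.} Your passage from $\beta_c$ to $\beta<\beta_c$ in the lower bound (Lebowitz's inequality plus \eqref{eq:IRB}, then integrate in $\beta$) is correct. It mirrors the $h$-interpolation of Section~\ref{sec:lower bounds}. For the polynomial prefactor, however, a single Simon--Lieb step on $\Lambda_{|x|/2}$ costs a factor you can only bound by $|x|$. Instead, iterate and use \eqref{eq:IRB} only at the last step, as in Proposition~\ref{prop:sufficient condition to exp decay}.
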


\begin{Thm}[\hspace{1pt}{\cite{DuminilPanis2024newLB}}]\label{thm:bounds 2pt d=4} Let $d=4$. There exist $c,C>0$ such that the following holds. For every $\beta\in [0,\beta_c]$ and every $x\in \mathbb Z^4$,
\begin{equation}\label{eq:nc upper bound d=4 h=0}
	\langle \sigma_0\sigma_x\rangle_{\beta}\leq \frac{C}{1\vee |x|^{2}}\exp\left(-c\frac{|x|}{\ell(\beta,0)\log \ell(\beta,0)}\right).
\end{equation}
Moreover, for every $|x|\leq c\,\ell(\beta,0)$,
\begin{equation}
	\langle \sigma_0\sigma_x\rangle_{\beta}\geq \frac{c}{1\vee |x|^2\log |x|}.
\end{equation}
\end{Thm}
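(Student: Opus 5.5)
This statement is imported from \cite{DuminilPanis2024newLB}, so the plan is to recall the structure of that proof in the present notation. It has two independent halves: a near-critical \emph{upper} bound obtained from Lemma~\ref{lem:SL} via a sharp-length estimate, and a \emph{lower} bound obtained by perturbing from $\beta_c$ and integrating in $\beta$. The critical inputs throughout are the infrared bound \eqref{eq:IRB} and the critical lower bound $\langle\sigma_0\sigma_x\rangle_{\beta_c}\gtrsim (1\vee|x|^2\log|x|)^{-1}$, which is the $\eta=0$ result of \cite{DuminilPanis2024newLB} and which I take as given.

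\textbf{Upper bound \eqref{eq:nc upper bound d=4 h=0}.} By the Duminil-Copin--Tassion mechanism recalled after Lemma~\ref{lem:SL}, it suffices to produce a set $S\ni 0$ (possibly random, with the iteration run in expectation) of diameter $n\lesssim \ell(\beta,0)\log\ell(\beta,0)$ with $\varphi_\beta(S)<\tfrac12$. Iterating Lemma~\ref{lem:SL} then gives $\langle\sigma_0\sigma_x\rangle_\beta\le C2^{-|x|/(n+1)}$. To recover the polynomial prefactor, I would run one last Simon--Lieb step and bound the remaining factor by $\langle\sigma_0\sigma_u\rangle_{\beta_c}\le C|u|^{-2}$.

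To find $S$, I would interpolate in $\beta$, writing $\varphi_\beta(S)=\varphi_{\beta_c}(S)-\int_\beta^{\beta_c}\partial_s\varphi_s(S)\,\mathrm ds$. This needs two estimates. The first is an upper bound $\varphi_{\beta_c}(S)\le C$. For $S=\Lambda_n$ this is not known, so I would use the random set built from a folded sourceless current, as in \cite{AizenmanDuminilTassionWarzelEmergentPlanarity2019}, on which the bound holds on average. The second is a lower bound on $-\partial_s\varphi_s(S)$, and this is where the new correlation inequality of \cite{DuminilPanis2024newLB} enters. It should show that increasing $\beta$ raises $\varphi$ at rate at least $c\,\sum_{u\in S}\langle\sigma_0\sigma_u\rangle_{S,s}$ times a boundary factor of order one. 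By the critical lower bound, this is $\gtrsim n^2/\log n$ for $s$ close to $\beta_c$ on the scales involved.

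Equating $C$ with $(\beta_c-\beta)\,n^2/\log n$ gives $n\asymp \ell(\beta,0)(\log\ell(\beta,0))^{1/2}$. This is even better than needed; losing an extra half log in the averaging over the random set is harmless.

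\textbf{Main obstacle.} The hard step is the derivative lower bound. It requires a genuine new correlation inequality, since derivatives of Ising correlations are not sign-controlled by Griffiths' inequality alone. It also has to be combined with the random-set averaging while keeping uniformity in $\beta$.

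\textbf{Lower bound.} I would start from
\begin{equation*}
\langle\sigma_0\sigma_x\rangle_\beta\ge\langle\sigma_0\sigma_x\rangle_{\beta_c}-(\beta_c-\beta)\sup_{s\in[\beta,\beta_c]}\partial_s\langle\sigma_0\sigma_x\rangle_s .
\end{equation*}
The derivative is controlled by Lebowitz-type bounds:
\begin{equation*}
\partial_s\langle\sigma_0\sigma_x\rangle_s\le\sum_{u\sim v}\langle\sigma_0\sigma_u\rangle_s\langle\sigma_v\sigma_x\rangle_s .
\end{equation*}
In $d=4$, with $|u|^{-2}$ decay, this diagram diverges logarithmically. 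I would therefore split the sum. Points with $|u|\gtrsim\ell(\beta,0)$ are controlled using the exponential decay from the upper bound just proved. Points near $0$ and near $x$ need a more careful treatment: there one should extract a factor $|x|^{-2}$ times a quantity of order $\ell(\beta,0)^2$ up to logarithms, compared against $(\beta_c-\beta)^{-1}$. The resulting error is $\lesssim (\beta_c-\beta)\,\ell(\beta,0)^2\,(\text{log})\,|x|^{-2}$.

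Making this smaller than $\tfrac12\langle\sigma_0\sigma_x\rangle_{\beta_c}$ for all $|x|\le c\,\ell(\beta,0)$ is delicate, because of the logarithms. A naive estimate only reaches $|x|\le c\,\ell/\log\ell$. To remove that loss, I expect to need the sharper form of the derivative estimate from the same correlation inequality used for the upper bound.
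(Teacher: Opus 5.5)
Your plan is circular at its core. Since $\ell(\beta_c,0)=\infty$, the critical lower bound $\langle\sigma_0\sigma_x\rangle_{\beta_c}\gtrsim(1\vee|x|^2\log|x|)^{-1}$ that you take as given is exactly the $\beta=\beta_c$ case of the second assertion of the theorem, and it is the main content of the cited result. Yet both halves of your argument rest on it: the upper bound uses it to get the rate $\gtrsim n^2/\log n$, and the lower bound uses it as the base point of the perturbation.

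Each half also fails on its own terms.

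For the upper bound, there is no inequality of the form $\partial_s\varphi_s(S)\geq c\sum_{u\in S}\langle\sigma_0\sigma_u\rangle_{S,s}$ to invoke. Griffiths' inequality only gives $\partial_s\varphi_s(S)\geq \varphi_s(S)/s$, and Lebowitz-type inequalities bound derivatives from above. The folded-current set cannot supply the lower bounds on correlations restricted to $\mathcal S_n$ that you would need, because the switching estimate \eqref{eq:switching where we move 0} only controls $\mathbf E[\varphi(\mathcal S_n)]$ from above. Moreover, in $d=4$ one only has $\mathbf E^\emptyset_{\beta_c}[\varphi_{\beta_c}(\mathcal S_n)]\lesssim\log n$ (Corollary \ref{coro:bound translate random s_n}), not $O(1)$, so your ``$\ell\sqrt{\log\ell}$'' is an artefact.

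For the lower bound, \eqref{eq:IRB} does not bound the Lebowitz diagram $\sum_{u\sim v}\langle\sigma_0\sigma_u\rangle_s\langle\sigma_v\sigma_x\rangle_s$ uniformly in $s\in[\beta,\beta_c]$ when $d=4$, since $\sum_u|u|^{-2}|x-u|^{-2}$ diverges. Exponential decay at $\beta$ gives no cutoff for $s$ near $\beta_c$. The error you write, $(\beta_c-\beta)\ell(\beta,0)^2(\log)|x|^{-2}$, equals $(\log)|x|^{-2}$ because $(\beta_c-\beta)\ell(\beta,0)^2=1$, so it already exceeds the main term. Integrating in $s$ instead of taking a supremum gives, with the available upper bounds, an error of order $(\beta_c-\beta)\bigl(1+\log(\ell(\beta,0)/|x|)\bigr)$ at best. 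This is below $(|x|^2\log|x|)^{-1}$ only when $|x|$ is smaller than $\ell(\beta,0)$ by a factor $\sqrt{\log\ell(\beta,0)}$, so this route cannot reach the stated range $|x|\leq c\,\ell(\beta,0)$, as you concede.

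The missing idea is that both bounds follow non-perturbatively from the sharp length $L(\beta)$ of \eqref{eq:def L(beta)} together with the folded-current set. No derivative bound on $\varphi$ is needed at $h=0$; in this paper, interpolation in $\beta$ (Lemma \ref{lem: derivative}) only transfers $h=0$ information to $h>0$, and it is an \emph{upper} bound on a derivative.

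The argument runs as follows.
\begin{itemize}
\item By definition of $L(\beta)$, if $n\leq L(\beta)$ then every $S\subset\Lambda_{n-1}$ containing $0$ has $\varphi_\beta(S)\geq\frac14$. Also $\varphi_{\beta_c}(S)\geq 1$ for every finite $S\ni 0$, since otherwise Lemma \ref{lem:SL} would give exponential decay at $\beta_c$.
\item Apply this to $\mathcal S_n\subset\Lambda_{n-1}$. Combining \eqref{eq:prop 1}, Lemma \ref{lem:random phi moved} and \eqref{eq:switching where we move 0} with $a=0$ (whose proofs apply at any $\beta\leq\beta_c$) gives $\frac18\lesssim\sum_{x\sim y}\bigl(\langle\sigma_0\sigma_x\rangle_\beta-\langle\sigma_0\sigma_{\mathcal R_n(x)}\rangle_\beta\bigr)\langle\sigma_y\sigma_{\mathcal R_n(y)}\rangle_{\beta_c}$.
\item Split this sum as in \eqref{eq:proof of prop 4}--\eqref{eq:proof of prop 3.6}. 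The part with $x\in\Lambda_{\varepsilon n}$ is $O(\varepsilon^2)$ by \eqref{eq:IRB}. The rest is $\lesssim n^2\log n\,\langle\sigma_0\sigma_{\lceil\varepsilon n\rceil\mathbf e_1}\rangle_\beta$ by \eqref{eq: gradient estimate} and Messager--Miracle-Sol\'e monotonicity.
\item Hence $\langle\sigma_0\sigma_{\lceil\varepsilon n\rceil\mathbf e_1}\rangle_\beta\gtrsim(n^2\log n)^{-1}$ for all $n\leq L(\beta)$. At $\beta=\beta_c$ this is the critical lower bound.
\item Lower bound: $L(\beta)\gtrsim\ell(\beta,0)$ follows from the mean-field bound $\chi(\beta)\gtrsim(\beta_c-\beta)^{-1}$ and from $\chi(\beta)\lesssim L(\beta)^2$, which comes from Lemma \ref{lem:SL} and \eqref{eq:IRB}. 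So the previous step gives the lower bound for all $|x|\leq c\,\ell(\beta,0)$, with no loss in the range.
\item Upper bound: use the same inequality at $n=L(\beta)$. Bound $\langle\sigma_0\sigma_{\lceil\varepsilon n\rceil\mathbf e_1}\rangle_\beta\lesssim n^{-4}\chi(\beta)$ by Messager--Miracle-Sol\'e, and use $\chi(\beta)\lesssim(\beta_c-\beta)^{-1}|\log(\beta_c-\beta)|$ from \cite{AizenmanGrahamRenormalizedCouplingSusceptibility4d1983}. This yields $L(\beta)\lesssim\ell(\beta,0)\log\ell(\beta,0)$.
\end{itemize}
Then \eqref{eq:nc upper bound d=4 h=0} follows from your Simon--Lieb iteration with the final infrared-bound step, which is fine as written.
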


As stated in \cite[Theorem~1.6]{DuminilPanis2024newLB}, these results imply that the correlation length of the Ising model at parameters $(\beta,0)$ with $\beta<\beta_c$ is of order $\ell(\beta,0)^{1+o(1)}$, where $o(1)$ tends to $0$ as $\beta$ approaches $\beta_c$. However, the main quantity of interest in \cite{DuminilPanis2024newLB} is not the correlation length, but rather the so-called \emph{sharp length} (introduced in \cite{hutchcroft2022derivation,PanisTriviality2023}), which we now define.

Recall the definition of $\varphi_{\beta}(S)$ from \eqref{eq: def phi}. The sharp length $L(\beta)$ is defined by 
\begin{equation}\label{eq:def L(beta)}
    L(\beta)=\inf\left\lbrace k\geq 1: \textup{ There exists } S\subset \Lambda_k \textup{ with }S\ni 0 \textup{ such that } \varphi_{\beta}(S)< \tfrac{1}{4}\right\rbrace.
\end{equation}
The following result\footnote{To be more precise, \cite{DuminilPanis2024newLB} uses $1/2$ instead of $1/4$ in the definition of $L(\beta)$. However, this minor modification does not affect their results.} was obtained in \cite{DuminilPanis2024newLB}, see Theorem~1.6 and Remark~1.7 there.
\begin{Thm}[{\hspace{1pt}{\cite{DuminilPanis2024newLB}}}]\label{thm:sharplength} Let $d\geq 4$. There exist $c,C>0$ such that, for every $\beta\in (0,\beta_c)$,
\begin{equation}
	c\,\ell(\beta,0)\leq L(\beta)\leq C\left\{
			\begin{array}{ll}
				\ell(\beta,0) & \mbox{if } d>4, \\
				\ell(\beta,0)\log \ell(\beta,0) & \mbox{if }d=4.
			\end{array}
			\right.
\end{equation}
\end{Thm}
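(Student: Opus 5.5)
The statement has two halves. The lower bound $L(\beta)\gtrsim\ell(\beta,0)$ should come from iterating the Simon--Lieb inequality against a mean-field lower bound on the susceptibility. The upper bound should come from a differential inequality in $\beta$ for $\varphi_\beta$ evaluated on suitable (possibly random) sets.

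\emph{Lower bound.} Let $k=L(\beta)$ and let $S\subset\Lambda_k$, $S\ni 0$, satisfy $\varphi_\beta(S)<\tfrac14$. Sum Lemma~\ref{lem:SL} over $x\in\Lambda$, use translation invariance of the infinite-volume two-point function, and let $\Lambda\nearrow\mathbb Z^d$. This gives
\begin{equation*}
\chi(\beta,0)\le \sum_{x\in S}\langle\sigma_0\sigma_x\rangle_{S,\beta}+\varphi_\beta(S)\,\chi(\beta,0),
\end{equation*}
hence $\chi(\beta,0)\le \tfrac43\sum_{x\in\Lambda_k}\langle\sigma_0\sigma_x\rangle_{\beta_c}\lesssim k^2$. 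The last step uses Griffiths' inequality (Proposition~\ref{prop:consequencegriffiths}) and the infrared bound \eqref{eq:IRB}. On the other side, the classical inequality $\tfrac{\mathrm d}{\mathrm d\beta}\chi\le 2d\,\chi^2$ (from the Gaussian domination/Lebowitz inequality), integrated up to $\beta_c$ where $\chi=\infty$, gives $\chi(\beta,0)\ge (2d(\beta_c-\beta))^{-1}$. Comparing the two bounds yields $k\gtrsim(\beta_c-\beta)^{-1/2}=\ell(\beta,0)$ in every $d\ge4$.

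\emph{Upper bound.} Fix $n$ and suppose, for contradiction, that $\varphi_\beta(S)\ge\tfrac14$ for every $S\subset\Lambda_n$ containing $0$. I would then show that the two-point function at $\beta$ is already ``critical-looking'' up to scale $n$, and that this forces $\beta_c-\beta\lesssim n^{-2}$ (or $n^{-2}(\log n)^2$ when $d=4$).
\begin{enumerate}
\item \emph{Differentiate $\varphi$ in $\beta$ from below.} By the random current representation and the switching lemma,
\begin{equation*}
\partial_\beta\langle\sigma_0\sigma_u\rangle_{S,\beta}\ge \sum_{ab}\langle\sigma_0\sigma_a\rangle_{S,\beta}\langle\sigma_b\sigma_u\rangle_{S,\beta}-(\text{error}).
\end{equation*}
The error is controlled by a bubble-type diagram, which is bounded for $d>4$ and of order $\log n$ for $d=4$. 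Inserting this into \eqref{eq: def phi} gives
\begin{equation*}
\partial_\beta\varphi_\beta(S)\gtrsim \sum_{a\in S}\langle\sigma_0\sigma_a\rangle_{S,\beta}\,\varphi_\beta(S-a)-(\text{error}).
\end{equation*}
\item \emph{Exploit the contradiction assumption on translates.} By hypothesis $\varphi_\beta(S-a)$ stays of order one on translates. Together with the lower bound $\sum_{a\in S}\langle\sigma_0\sigma_a\rangle_{S,\beta}\gtrsim n^2$, this makes $\partial_\beta\varphi_\beta(S)\gtrsim n^2$. The lower bound on $\sum_a\langle\sigma_0\sigma_a\rangle_{S,\beta}$ itself comes from the same Simon--Lieb argument as above, now run in reverse under the hypothesis that $\varphi\ge\tfrac14$.
\item \emph{Integrate from $\beta$ to $\beta_c$.} Simon's inequality bounds $\varphi_{\beta_c}$ by a constant from above for suitable sets (box-averaged over $k\in[n,2n]$). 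Since $\varphi$ increases at rate $\gtrsim n^2$ between $\beta$ and $\beta_c$, this forces $(\beta_c-\beta)n^2\lesssim1$. Hence $L(\beta)\lesssim\ell(\beta,0)$, with the extra $\log$ factor in $d=4$ coming from the bubble error.
\end{enumerate}

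\emph{Main obstacle.} The hardest step is step 2. It needs $\varphi_\beta(S-a)$ uniformly controlled over translates $a\in S$, and this is not available for deterministic $S$ such as boxes. I would first try to average over boxes $\Lambda_k$, $k\in[n,2n]$. If that fails, I would replace $S$ by a random set built from a folded sourceless current, as in \cite{AizenmanDuminilTassionWarzelEmergentPlanarity2019}, for which translate bounds hold on average. The hypothesis must then be read as $\varphi_\beta\ge\tfrac14$ on all realisations, and the whole differential inequality has to be run in expectation.
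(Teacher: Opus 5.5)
The paper does not prove this statement; it imports it from \cite{DuminilPanis2024newLB} (Theorem~1.6 and Remark~1.7 there). Your lower bound is fine. Summing Lemma~\ref{lem:SL} over $x$ gives $\chi(\beta)\le\frac43\sum_{x\in\Lambda_{L(\beta)}}\langle\sigma_0\sigma_x\rangle_{\beta_c}\lesssim L(\beta)^2$; the rearrangement is legitimate because $\varphi_\beta(S)<1$ already forces $\chi(\beta)<\infty$. The Lebowitz inequality $\partial_\beta\chi\le 2d\chi^2$ then gives $\chi(\beta)\ge(2d(\beta_c-\beta))^{-1}$.

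The upper bound, however, has a genuine gap in Step~2, where you obtain $\sum_{a\in S}\langle\sigma_0\sigma_a\rangle_{S,\beta}\gtrsim n^2$ from ``Simon--Lieb run in reverse''. The Simon--Lieb inequality is an upper bound and cannot be reversed. What the hypothesis $\varphi_\beta(\Lambda_k)\ge\frac14$ for $k\le n$ gives directly is $\sum_{x\in\partial\Lambda_k}\langle\sigma_0\sigma_x\rangle_\beta\gtrsim1$ for each $k$. Summing over $k$ yields only $\sum_{x\in\Lambda_n}\langle\sigma_0\sigma_x\rangle_\beta\gtrsim n$, the classical $|x|^{1-d}$-type bound.

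Upgrading $n$ to $n^2$ means proving that $L(\beta)>n$ forces $\langle\sigma_0\sigma_x\rangle_\beta\gtrsim n^{2-d}(\log n)^{-\mathds 1_{d=4}}$ at $|x|\asymp n$. This is the whole content of the new correlation inequality of \cite{DuminilPanis2024newLB}, which works with the folded-current random set $\mathcal S_n$:
\begin{itemize}
\item If $L(\beta)>n$, then $\frac18\le\mathbf E^\emptyset_{\beta_c}[\mathds 1_{0\in\mathcal S_n}\varphi_\beta(\mathcal S_n)]$.
\item The switching estimate \eqref{eq:switching where we move 0} bounds this by a constant times $\sum_{x\sim y}(\langle\sigma_0\sigma_x\rangle_\beta-\langle\sigma_0\sigma_{\mathcal R_n(x)}\rangle_\beta)\langle\sigma_y\sigma_{\mathcal R_n(y)}\rangle_{\beta_c}$.
\item Reflection positivity (the infrared bound and the Messager--Miracle-Sol\'e gradient estimates) shows that $x\in\Lambda_{\varepsilon n}$ contributes only $O(\varepsilon^2)$, so the mass must sit at $|x|\asymp n$.
\end{itemize}
Nothing in your proposal supplies this. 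Moreover, once this lower bound is available, your differential inequality is superfluous. It gives $\chi(\beta)\gtrsim L(\beta)^2(\log L(\beta))^{-\mathds 1_{d=4}}$. The mean-field bound $\chi(\beta)\lesssim(\beta_c-\beta)^{-1}|\log(\beta_c-\beta)|^{\mathds 1_{d=4}}$ of \cite{AizenmanGeometricAnalysis1982,AizenmanGrahamRenormalizedCouplingSusceptibility4d1983} (the $h=0$ case of Theorem~\ref{thm: susceptibility}) then yields $L(\beta)\lesssim\ell(\beta,0)(\log\ell(\beta,0))^{\mathds 1_{d=4}}$ at once.

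Steps~1 and~3 would also fail as written.
\begin{itemize}
\item In Step~1, the tree-diagram error is (bubble)$\times$(a convolution of the same form as your main term). A bubble that is merely bounded, rather than small, therefore does not make it negligible. You would need an Aizenman--Graham-type refinement with the bubble in a denominator; that is classically available for $\chi$, and you do not supply one for $\varphi_\beta(S)$.
\item In Step~3, Simon's inequality gives the lower bound $\varphi_{\beta_c}(S)\ge1$, not an upper bound. Averaging over boxes with \eqref{eq:IRB} only yields $\varphi_{\beta_c}(\Lambda_k)\lesssim k$.
\item An $O(1)$ upper bound is available only in expectation over $\mathcal S_n$ (Corollary~\ref{coro:bound translate random s_n}). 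Passing to $\mathcal S_n$ would then require your Step~2 lower bound for free-boundary two-point functions inside the random domain $\mathcal S_n$, which is harder still.
\end{itemize}
Finally, the obstacle you single out is not the real one. The lower bound $\varphi_\beta(S-a)\ge\frac14$ on translates, which is all Step~2 uses, follows from your hypothesis applied in $\Lambda_{2n}$, since $0\in S-a\subset\Lambda_{2n}$.
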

%
%

\subsection{Behaviour of the magnetisation}
In the rest of the section, we recall results from \cite{AizenmanFernandezCriticalBehaviorMagnetization1986} on the high-dimensional Ising model at parameters $(\beta,h)\in \mathcal Q$. Recall that
\begin{equation}
	M(\beta,h)=\langle \sigma_0\rangle_{\beta,h}.
\end{equation}

\begin{Thm}[\hspace{1pt}{\cite{AizenmanFernandezCriticalBehaviorMagnetization1986}}]\label{thm: magnetisation} Let $d\geq 4$. There exist $c,C,h_0>0$ such that, for every $h\in(0,h_0]$,
\begin{equation}
	c\,h^{1/3}\leq M(\beta_c,h)\leq C\left\{
			\begin{array}{ll}
				h^{1/3} & \mbox{if } d>4, \\
				h^{1/3}|\log h| & \mbox{if }d=4.
			\end{array}
			\right.
	\end{equation}
\end{Thm}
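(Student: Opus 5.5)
This is the Aizenman--Fernández result $M(\beta_c,h)\asymp h^{1/3}$ in $d>4$, with a logarithmic correction in $d=4$. I would reprove it from differential inequalities for $M=M(\beta_c,h)$ in $h$. Two inequalities are needed.

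\emph{Upper bound on $\partial_h M$.} By GHS, $\partial_h M=\chi(\beta_c,h)\geq 0$ is nonincreasing in $h$. A simple bound is $\chi(\beta_c,h)\leq M/h$, which follows from concavity of $M$ in $h$ together with $M(\beta_c,0^+)=0$; this uses the continuity $m^*(\beta_c)=0$ recalled in the introduction.

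\emph{Lower bound on $\partial_h M$.} This is the key nonlinear inequality
\begin{equation*}
M\leq h\,\chi+ C\,B\,M^3 .
\end{equation*}
Here $B$ is the bubble diagram at criticality, $B=\sum_x\langle\sigma_0\sigma_x\rangle_{\beta_c}^2$. It is finite when $d>4$ by the infrared bound \eqref{eq:IRB}. In $d=4$ one uses instead a truncated bubble $B_L$ with $L\approx\xi$, and $B_L\lesssim\log L$.

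To prove this inequality, I would use the random current representation with a ghost vertex $\fg$, so that $M=\langle\sigma_0\sigma_\fg\rangle$. The source constraint $\partial\n=\{0,\fg\}$ forces a path from $0$ to $\fg$. One decomposes according to whether this connection uses a single ghost edge (the ``tree'' contribution, of order $h\chi$) or whether the connection to $\fg$ branches. In the branching case, the switching lemma gives a bound by a sum over a vertex $z$: two-point functions $\langle\sigma_0\sigma_z\rangle$ times three magnetisations $M$ at $z$, summed with a bubble. This is exactly the Aizenman--Fernández analogue of the Aizenman--Graham inequality, and all of it is done at $h>0$ using the monotonicity in Proposition~\ref{prop:consequencegriffiths}.

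\emph{Integrating for the upper bound.} Combine the two inequalities. If $M\geq C'h^{1/3}$ with $C'$ large, then $CBM^3\geq M/2$ fails to be compatible unless $h\chi\gtrsim M$. So $\partial_h M\gtrsim M/h$ holds whenever $M\lesssim (BC)^{-1/2}$.

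Alternatively, write $\partial_h M\geq (M-CBM^3)/h$ and compare with the ODE solution. Together with $M\to0$ as $h\to0$, this gives $M^3\lesssim h$, i.e.\ $M\lesssim h^{1/3}$, for $d>4$.

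In $d=4$ the bubble is truncated at scale $\xi(\beta_c,h)$. Bounding this scale polynomially in $h^{-1}$ gives $B\lesssim|\log h|$, which yields $M\lesssim (h|\log h|)^{1/3}$. The stated weaker bound $h^{1/3}|\log h|$ then follows a fortiori.

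\emph{Lower bound.} I would use the complementary inequality $\chi\leq M/h$ (concavity) together with the Aizenman--Barsky--Fernández-type inequality $M\leq h\chi+M^3+\beta_c M^2\partial_\beta M$. At $\beta_c$, the inequality $\partial_\beta M\leq 2dM\,\partial_h M$ from \eqref{eq:expl comparison to h=0 2.5} controls the $\beta$-derivative. Combining these gives
\begin{equation*}
M\leq h\partial_hM+M^3+CM^3\partial_hM .
\end{equation*}
Integrating from $0$ to $h$ yields $M^3\gtrsim h$.

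The main obstacle is the random-current derivation of the tree inequality with the bubble factor, and in particular its $d=4$ truncation, which must be done while controlling the ghost-field contributions.
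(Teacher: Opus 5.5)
The paper gives no proof of this theorem; it is imported from \cite{AizenmanFernandezCriticalBehaviorMagnetization1986}, so your sketch has to stand on its own. The lower-bound half follows the standard Aizenman--Barsky--Fern\'andez route and is essentially fine. The upper-bound half, however, rests on an inequality that is false.

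\textbf{The key inequality fails.} Your bound $M\le h\chi+CBM^3$ cannot hold when $B<\infty$ is fixed, as in $d>4$. Combine it with your own bound $h\chi\le M$. You get $h\,\partial_hM\ge M(1-CBM^2)$, so $h\mapsto M/(h\sqrt{1-CBM^2})$ is nondecreasing. This forces $M(\beta_c,h)\lesssim h$, and then $\chi(\beta_c,h)\le M/h$ stays bounded as $h\downarrow0$. That contradicts $\chi(\beta_c,h)\to\infty$ (by Fatou, since $\chi(\beta_c,0)=\infty$). It also contradicts the lower bound $M\gtrsim h^{1/3}$ you are trying to prove.

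A sanity check in the Curie--Weiss model at criticality: $M\approx(3h)^{1/3}$ and $\chi\approx M^{-2}$, so $h\chi\approx M/3$. Hence $M-h\chi\approx\tfrac23 M\gg M^3$; the gap is not a cubic correction.

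Your integration step is also wrong on its own terms: your inequality would give $M\lesssim h$, not $M^3\lesssim h$. An error term compatible with mean field must carry a factor $\chi$, as in $M\le h\chi+M^3+CM^3\chi$. But inequalities of this shape give $\delta\ge 3$, valid in every dimension with no bubble. They yield nothing like $M\lesssim h^{1/3}$ unless they pin $h\,\partial_hM/M\ge\tfrac13$ asymptotically, which a crude error term cannot do.

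\textbf{What is missing.} You need an inequality in the opposite direction, expressing that the field generates a mass of order $M^2$. This is where the bubble, and in $d=4$ its logarithm, must enter. For instance, $\chi\lesssim B/M^2$ gives $\partial_h(M^3)\lesssim B$. Integrating upward from $h=0$, using $M(\beta_c,0^+)=0$, yields $M^3\lesssim Bh$. Equivalently, a bound $\sum_yK_{\beta_c,h}(0,y)\lesssim B/M^2$, combined with $M\le\tanh(h)\sum_yK_{\beta_c,h}(0,y)$ from Proposition~\ref{prop: sum of K}, gives the same conclusion. Establishing a bound of this type, with the bubble controlling the error terms, is the substantive step, and your sketch does not supply it.

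\textbf{A smaller point on the lower bound.} The inequality $M\le h\chi+M^3+CM^3\chi$ is satisfied by $M=Kh$, so ``integrating from $0$'' cannot by itself give $M^3\gtrsim h$. You must also invoke $\chi(\beta_c,h)\to\infty$ as $h\downarrow 0$. Without it, smallness of $M^3/h$ at one value of $h$ propagates downward into exactly this linear regime.
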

\begin{Rem} By Griffiths' inequality, the upper bound in Theorem \ref{thm: magnetisation} holds with $\beta_c$ replaced by $\beta\leq \beta_c$.
\end{Rem}

\subsection{Behaviour of the susceptibility}
We end this section with bounds on the truncated susceptibility $\chi(\beta,h)$. Recall that it is defined for $(\beta,h)\in \mathcal Q^*$ by
\begin{equation}
	\chi(\beta,h)=\sum_{x\in \mathbb Z^d}\langle \sigma_0;\sigma_x\rangle_{\beta,h}.
\end{equation}
The following result is an easy consequence of the analysis of \cite{AizenmanGeometricAnalysis1982,AizenmanGrahamRenormalizedCouplingSusceptibility4d1983,AizenmanFernandezCriticalBehaviorMagnetization1986}. We provide a short proof below for the sake of completeness.
\begin{Thm}\label{thm: susceptibility} Let $d\geq 4$. There exist $C,\delta>0$ such that, for every $(\beta,h)\in \mathcal Q_\delta$,
\begin{equation}
\chi(\beta,h)\leq C\left\{
			\begin{array}{ll}
				\ell(\beta,h)^2 & \mbox{if } d>4, \\
				\ell(\beta,h)^2\log \ell(\beta,h) & \mbox{if }d=4.
			\end{array}
			\right.
\end{equation}
\end{Thm}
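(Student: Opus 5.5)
We bound $\chi(\beta,h)$ in the two off-critical regimes separately and take the smaller bound. The two estimates are
\begin{equation*}
\chi(\beta,h)\leq \chi(\beta,0) \qquad\text{and}\qquad \chi(\beta,h)\leq \frac{M(\beta_c,h)}{h}.
\end{equation*}
Their minimum is what we want, since $\ell(\beta,h)^2=\ell(\beta,0)^2\wedge \ell(\beta_c,h)^2=(\beta_c-\beta)^{-1}\wedge h^{-2/3}$.

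\emph{Thermal bound.} By Proposition~\ref{prop: trunc is bounded by h=0} (passed to the infinite-volume limit), $\langle\sigma_0;\sigma_x\rangle_{\beta,h}\leq\langle\sigma_0\sigma_x\rangle_\beta$. Summing over $x$ gives $\chi(\beta,h)\leq\chi(\beta,0)$. We then need $\chi(\beta,0)\lesssim(\beta_c-\beta)^{-1}$ for $d>4$, and $\chi(\beta,0)\lesssim(\beta_c-\beta)^{-1}|\log(\beta_c-\beta)|$ for $d=4$.

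There are two routes to this.
\begin{itemize}
\item One can quote the mean-field bound $\chi(\beta,0)\leq C(\beta_c-\beta)^{-1}$, which holds in every $d\geq 3$ from the differential inequality $\partial_\beta\chi\geq 2d\,\chi^2$ with the constant adjusted; the integration is the classical Aizenman argument. This already gives the $d=4$ bound without the logarithm.
\item More in the spirit of the paper, one can sum the bounds of Theorems~\ref{thm:bounds 2pt d>4}--\ref{thm:bounds 2pt d=4} over $x$. For $d>4$ this gives $\sum_x |x|^{2-d}e^{-c|x|/\ell}\lesssim \ell^2$. For $d=4$, the correlation length is $\ell\log\ell$, so the sum gives $(\ell\log\ell)^2$, which is too large by a log factor. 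So for $d=4$ the first route is the one to use.
\end{itemize}

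\emph{Magnetic bound.} Since $\chi(\beta,h)=\partial_h M(\beta,h)$, the function $M$ is increasing in $h$. GHS concavity ($\partial_h^2 M\leq0$ for $h\geq0$) makes $h\mapsto M(\beta,h)$ concave on $[0,\infty)$, and $M(\beta,0)=0$ for $\beta\leq\beta_c$. Concavity then gives $h\,\partial_hM(\beta,h)\leq M(\beta,h)-M(\beta,0)=M(\beta,h)$, that is,
\begin{equation*}
\chi(\beta,h)\leq \frac{M(\beta,h)}{h}\leq\frac{M(\beta_c,h)}{h},
\end{equation*}
where the last step is Griffiths' inequality. Theorem~\ref{thm: magnetisation} now yields $\chi(\beta,h)\lesssim h^{-2/3}=\ell(\beta_c,h)^2$ for $d>4$, and $\chi(\beta,h)\lesssim h^{-2/3}|\log h|\asymp \ell(\beta_c,h)^2\log\ell(\beta_c,h)$ for $d=4$. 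Here $\delta\leq h_0$ ensures the theorem applies.

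\emph{Combining.} In either regime, $\chi$ is bounded by the smaller of the two estimates. Each estimate is at most $C\ell^2$, with the log factor in $d=4$, where $\ell$ is the relevant partial length. For $d=4$ one checks $\log\ell(\beta_c,h)\leq\log\ell(\beta,h)$ whenever $\ell(\beta,h)=\ell(\beta_c,h)$, which is immediate; in the thermal regime we have no logarithm at all.

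The only point needing care is to justify the differentiation identities ($\chi=\partial_hM$ and GHS concavity) in infinite volume. This is standard: take finite-volume versions, where $\chi_\Lambda=\partial_h M_\Lambda$, apply concavity there, and pass to the limit using uniqueness of the Gibbs state for $h>0$. For $h=0$, the magnetic bound is simply not used.
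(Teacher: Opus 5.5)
\textbf{Gap: the thermal regime in $d=4$.} Your route 2 handles the thermal regime for $d>4$. Your magnetic bound is also correct, and it differs from the paper's argument. You use GHS concavity in finite volume with free boundary conditions, where $\langle\sigma_0\rangle_{\Lambda,\beta,0}=0$ by symmetry. Then Fatou and Griffiths give $\chi(\beta,h)\le h^{-1}M(\beta_c,h)$. The paper instead gets the same bound (up to a factor $2^{2d+1}$) from $\langle\sigma_0;\sigma_x\rangle_{\beta,h}\le K_{\beta,h}(0,x)$ together with Proposition~\ref{prop: sum of K}.

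In $d=4$ you rightly discard summation, which loses a logarithm, and fall back on your first route, which is wrong. The differential inequality valid in every dimension goes the other way. Lebowitz's inequality $U_4\le 0$ gives $\partial_\beta\chi(\beta,0)\le 2d\,\chi(\beta,0)^2$. Integrating from $\beta$ to $\beta_c$ yields only the mean-field \emph{lower} bound $\chi(\beta,0)\ge [2d(\beta_c-\beta)]^{-1}$.

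An upper bound on $\chi$ needs a \emph{lower} bound on $\partial_\beta\chi$. No inequality $\partial_\beta\chi\ge c\,\chi^2$ holds without extra input: in $d=2$ it would force $\gamma\le 1$, contradicting $\gamma=7/4$. Moreover, $\chi(\beta,0)\le C(\beta_c-\beta)^{-1}$ is expected to fail in $d=3$. In $d=4$ the predicted behaviour is $(\beta_c-\beta)^{-1}|\log(\beta_c-\beta)|^{1/3}$. So your claim that route 1 gives the $d=4$ bound without a logarithm cannot be established.

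\textbf{The missing input.} What you need is Aizenman's random-current lower bound on $\partial_\beta\chi$. It comes from the tree-diagram bound on $U_4$ and has the form $\partial_\beta\chi\gtrsim \chi^2/(1+B(\beta))$, where $B(\beta)=\sum_x\langle\sigma_0\sigma_x\rangle_\beta^2$ is the bubble diagram.
\begin{itemize}
\item For $d>4$, the infrared bound gives $B(\beta_c)<\infty$, hence $\chi(\beta,0)\lesssim(\beta_c-\beta)^{-1}$ (Aizenman). The conclusion of your route 1 is true here, but for this reason.
\item For $d=4$, the infrared bound in Fourier space only gives $B(\beta)\lesssim 1+\log\chi(\beta,0)$. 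Integrating $\partial_\beta\chi\gtrsim\chi^2/\log\chi$ gives the Aizenman--Graham bound $\chi(\beta,0)\lesssim(\beta_c-\beta)^{-1}|\log(\beta_c-\beta)|\asymp\ell(\beta,0)^2\log\ell(\beta,0)$.
\end{itemize}
This is exactly what the paper cites for the thermal half, after the same reduction $\chi(\beta,h)\le\chi(\beta,0)$ that you use. With this input replacing route 1, your combination step goes through. In $d=4$ the logarithm then appears in both regimes, not only the magnetic one.
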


\begin{proof} By definition of $\ell(\beta,h)$, it suffices to prove that 
\begin{equation}
\chi(\beta,h)\lesssim \ell(\beta,0)^2(\log \ell(\beta,0))^{\mathds{1}_{d=4}},
\end{equation} and 
\begin{equation}
\chi(\beta,h)\lesssim \ell(\beta_c,h)^2(\log \ell(\beta_c,h))^{\mathds{1}_{d=4}}.
\end{equation} 

The first inequality follows by the results of \cite{AizenmanGeometricAnalysis1982} (in $d>4$) and \cite{AizenmanGrahamRenormalizedCouplingSusceptibility4d1983} (in $d=4$) after observing that, thanks to Proposition \ref{prop: trunc is bounded by h=0}, one has $\chi(\beta,h)\leq \chi(\beta,0)$. 

To prove the second inequality, we use Proposition \ref{prop: trunc is bounded by h=0} and Proposition \ref{prop: sum of K} below (which is Proposition 4.8 in \cite{AizenmanFernandezCriticalBehaviorMagnetization1986}) to get that $\chi(\beta,h)\lesssim M(\beta,h) h^{-1}$. Combining this with Theorem \ref{thm: magnetisation} concludes the proof. 
%
%
%
\end{proof}

\begin{Rem}\label{rem:suscept lb}\hspace{-1pt}(1) A non-rigorous heuristic for Theorem \ref{thm: susceptibility} (at least when $\beta=\beta_c$) is provided by the combination of Theorem \ref{thm: magnetisation} and the following identity
\begin{equation}
	\chi(\beta,h)=\frac{\partial}{\partial h}M(\beta,h).
\end{equation}

(2) In fact, \cite{AizenmanFernandezCriticalBehaviorMagnetization1986} 
establishes a matching lower bound of the same order, at least when 
$\ell(\beta,h)=\ell(\beta_c,h)$. In particular, it holds that $\chi(\beta_c,h)\gtrsim \ell(\beta_c,h)^2$ when $d>4$.
\end{Rem}

\section{Backbone expansion and the two-point function $K$}\label{sec:backbone and stuff}

We now recall the \emph{backbone expansion} (or random walk expansion) of the Ising model from which the two-point function $K$---the central object in our proofs---emerges naturally. The backbone expansion is closely related to the \emph{random current expansion} of the model, which we first introduce.
\subsection{Random currents}
%
The random current expansion of the Ising model goes back to \cite{GriffithsHurstShermanConcavity1970}. Its key role in the analysis of the high-dimensional Ising model was first observed in \cite{AizenmanGeometricAnalysis1982}.  Since then, it has been the central object of many works, see for instance \cite{AizenmanGrahamRenormalizedCouplingSusceptibility4d1983,AizenmanFernandezCriticalBehaviorMagnetization1986,AizenmanBarskyFernandezSharpnessIsing1987,AizenmanDuminilSidoraviciusContinuityIsing2015,DuminilTassionNewProofSharpness2016,AizenmanDuminilTassionWarzelEmergentPlanarity2019,AizenmanDuminilTriviality2021,PanisTriviality2023,KrachunPanagiotisPanisScalinglimit2023,chen2023conformal,PanisIIC2024,DuminilPanis2024newLB,duminil2025conformal,LiuPanisSladePlateau,thinggaard2026end,hansen2026supercritical}. We refer to \cite{DuminilLecturesOnIsingandPottsModels2019} or \cite[Chapter~2]{Panis2024applications} for more information.

Let $G=(V,E)$ be a finite subgraph of $\mathbb Z^d$. Let $\mathfrak{g}$ denote the \emph{ghost} vertex and define $G_{\mathfrak{g}}$ to be the graph of vertex set $V_\mathfrak{g}:=V\cup \{\mathfrak{g}\}$ and edge set $E_{\mathfrak{g}}:=E\cup \{x\mathfrak{g}: x\in V\}$.
\begin{Def}[Current] A \textit{current} $\n$ on $G_\fg$ is an $\mathbb N$-valued function on $E_\fg$, where $\mathbb N=\lbrace 0,1,\ldots\rbrace$. We let $\Omega_{G_\fg}$ denote the set of currents on $G_\fg$. The set of \emph{sources} of $\n$, denoted by $\sn$, is defined as follows
\begin{equation}
\sn:=\Big\{ x \in V_\fg \: : \: \sum_{y\in V_\fg:\: xy\in E_\fg}\n_{xy}\textup{ is odd}\Big\}.
\end{equation}
For every $J\in (\mathbb R^+)^E$ and $h\geq 0$, we let
\begin{equation}
w_{J,h}^G(\n):=\prod_{\substack{xy\in E}}\dfrac{J_{xy}^{\n_{xy}}}{\n_{xy}!}\prod_{x\in V}\frac{h^{\n_{x\fg}}}{\n_{x\fg}!}.
\end{equation}
If $J\equiv \beta$ for some $\beta\geq 0$ (resp. $h=0$) we simply write $w_{\beta,h}^G(\n)=w_{J,h}^G(\n)$ (resp. $w_{J}^G(\n)=w_{J,0}^G(\n)$). 
\end{Def}
\begin{Rem} Observe that $|\sn|$ is always even.
\end{Rem}

It is classical (see \cite{AizenmanGeometricAnalysis1982,DuminilLecturesOnIsingandPottsModels2019,Panis2024applications}) that the correlation functions of the Ising model are related to currents: setting $\sigma_{\fg}=1$, one has, for every $A\subset V_\fg$ with $|A|$ even, 
\begin{equation}\label{eq: relation partition functions}
	\sum_{\sigma\in \{-1,1\}^V}\sigma_A \exp\Big(\sum_{xy\in E}J_{xy}\sigma_x\sigma_y+h\sum_{x\in V}\sigma_x\Big)=2^{|V|}\sum_{\n\in \Omega_{G_\fg}:\: \sn=A}w^G_{J,h}(\n),
\end{equation}
so that
\begin{equation}\label{equation correlation rcr}
\left\langle \sigma_A\right\rangle_{G,J,h}=\dfrac{\sum_{\n\in \Omega_{G_\fg}:\:\sn=A}w_{J,h}^G(\n)}{\sum_{\n\in \Omega_{G_\fg}:\:\sn=\emptyset}w_{J,h}^G(\n)}. 
\end{equation}
If $A\subset V$ with $|A|$ even, we define a probability measure $\mathbf{P}_{G,J,h}^A$ on $\Omega_{G_\fg}$ as follows: for every $\n\in \Omega_{G_\fg}$, 
\begin{equation}
    \mathbf{P}_{G,J,h}^A[\n]:=\mathds{1}_{\sn=A}\frac{w_{J,h}^G(\n)}{Z_{G,J,h}^A},
\end{equation}
where $Z_{G,J,h}^A:=\sum_{\n\in \Omega_{G_\fg}:\:\sn=A}w_{J,h}^G(\n)$ is a normalisation constant. When $A=\lbrace x,y\rbrace$, we write $\mathbf{P}^{xy}_{G,J,h}$ instead of $\mathbf{P}^{\{x,y\}}_{G,J,h}$. As observed in \cite{AizenmanDuminilSidoraviciusContinuityIsing2015}, one can construct the infinite volume limit of the above measures as $G\nearrow \mathbb Z^d$. We denote it by $\mathbf P_{J,h}^A$. When $J\equiv\beta$ for some $\beta\geq 0$, or when $h=0$, we use the simplified notation introduced above.

The trace $(\mathds{1}_{\n_{xy}>0})_{xy \in E_\fg}$ of a current $\n$ induces a percolation configuration on $G_\fg$.   The main strength of the random current expansion lies in a combinatorial identity called the \emph{switching lemma}, which relates the percolation induced by the sum of two independent currents to the Ising model. We will not directly use it here but refer the reader to \cite{DuminilLecturesOnIsingandPottsModels2019,Panis2024applications} for more information. The switching lemma provides a probabilistic interpretation of the truncated two-point function $\langle \sigma_x;\sigma_y\rangle_{G,\beta,h}$, which we now state to motivate what is below: for every $\beta,h\geq 0$ and $x,y\in V$ distinct, one has
\begin{equation}\label{eq:switching for truncated}
	\langle \sigma_x;\sigma_y\rangle_{G,\beta,h}=\langle \sigma_x\sigma_y\rangle_{G,\beta,h}\mathbf P^{xy,\emptyset}_{G,\beta,h}[x \textup{ is not connected to }\fg\textup{ in }\n_1+\n_2],
\end{equation}
where $\mathbf P^{xy,\emptyset}_{G,\beta,h}$ denotes the product measure $\mathbf P^{xy}_{G,\beta,h}\otimes\mathbf P^{\emptyset}_{G,\beta,h}$. In words, the probabilistic interpretation of the truncated two-point function is that, in the appropriate double random current percolation, $x$ and $y$ are connected together but not to $\fg$.

\subsection{Backbone expansion}
If $\n$ is a current with source set $\sn=\{x,y\}$, then $x$ and $y$ are connected in $\n$. The backbone expansion of the Ising model---first introduced in \cite{AizenmanGeometricAnalysis1982}---provides a way to explore one such path.
We follow the definitions of \cite{AizenmanFernandezCriticalBehaviorMagnetization1986} (except for one minor difference), and refer to this paper for more information.

We fix $G=(V,E)$ a finite subgraph of $\mathbb Z^d$ and an arbitrary ordering $\prec$ of $E$. We construct an ordering of $E_\fg$ from $\prec$---that we still denote $\prec$---by requiring that for each $x,y\in V$ with $y\sim x$, one has $xy\prec x\fg$. 

\begin{Rem}
The ordering $\prec$ on $E_\fg$ differs from that of 
\cite{AizenmanFernandezCriticalBehaviorMagnetization1986}, where the edge 
from any vertex $x$ to the ghost is always the earliest. This difference 
does not affect the results of 
\cite{AizenmanFernandezCriticalBehaviorMagnetization1986} that we rely on (except for the lower bound in \eqref{eq: prop sum K 1} below), 
and is motivated by technical reasons explained below.
\end{Rem}

 To each edge (or step) $xy$ (with $y\in V_\fg$), we introduce its set of \emph{explored edges}, defined to be $\{xz: xz\prec xy\}\cup \{xy\}$. A sequence of steps $\gamma=(x_ix_{i+1})_{0\leq i\leq n-1}$ is said to be \emph{consistent} if no step uses an edge explored by a previous step. We denote by $\overline{\gamma}$ the set of all edges explored by $\gamma$, and by $\partial \gamma$ the vertices of $V_\fg$ covered by an odd number of steps of $\gamma$. 

We are now equipped to describe the backbone exploration.

\begin{Def}[Backbone exploration from a vertex] Let $x\in V_\fg$. Let $\n\in \Omega_{G_{\fg}}$ be such that $x\in \sn$. The $x$-backbone of $\n$, denoted $\Gamma_x(\n)$, is constructed as follows:
\begin{enumerate}
	\item[$(i)$] Set $x_0=x$. Explore from $x$ the earliest edge $e$ such that $\n_e$ is odd. Call $x_1$ its endpoint.
	 \item[$(ii)$] Assume that $(x_i)_{0\leq i\leq k}$ has been constructed and that $x_k\notin(\sn\setminus\{x\})\cup\{\fg\}$. Explore from $x_k$ the earliest edge $e$ which has not been explored so far and such that $\n_e$ is odd.
	 \item[$(iii)$] The exploration necessarily stops at an element of $(\sn\setminus\{x\})\cup\{\fg\}$ after $n$ steps, for some $n\geq 1$. We let $\Gamma_x(\n)$ denote the sequence of $n$ steps $(x_ix_{i+1})_{0\leq i \leq n-1}$.
		\end{enumerate}
		\end{Def}

	\begin{Def}[Backbone exploration] Let $x,y\in V_\fg$. Let $\n\in \Omega_{G_\fg}$ be such that $\sn=\{x,y\}$. The backbone of $\n$, denoted $\Gamma(\n)$, is constructed as follows:
	\begin{enumerate}
		\item[$(i)$] Explore the $x$-backbone $\Gamma_x(\n)$. If it ends at $y$, set $\Gamma(\n):=\Gamma_x(\n)$.
		\item[$(ii)$] If not, run the exploration of the $y$-backbone in the complement of $\overline{\Gamma_x(\n)}$. The backbone $\Gamma(\n)$ is then defined to be the union of these two explorations.
	\end{enumerate}
	\end{Def}

\begin{Rem}\label{rem:writing with paths}
In the first case above, the backbone $\Gamma(\n)$ can be seen as a path $\gamma:x\rightarrow y$. In the second case, it is a pair of paths $(\gamma_1,\gamma_2)$ with $\gamma_1:x\rightarrow \fg$ and $\gamma_2:y\rightarrow \fg$, and where $\gamma_2$ takes steps in $E_\fg\setminus \overline{\gamma_1}$.
\end{Rem}	
Let us make a few observations. By definition, the set $\overline{\Gamma(\n)}$ is determined by $\Gamma(\n)$ and $\prec$. Similarly, $\Gamma(\n)$ can be deduced from $\overline{\Gamma(\n)}$ by choosing the odd edges according to $\prec$. The current $\n\setminus\overline{\Gamma(\n)}$, which we define to be the restriction of $\n$ to the complement of $\overline{\Gamma(\n)}$, is sourceless. If $J\in (\mathbb R^+)^E$ and $h\geq 0$, we can write
\begin{equation}\label{eq:backbone expansion}
	\langle \sigma_x\sigma_y\rangle_{G,J,h}=\sum_{\partial \gamma=\{x,y\}}\rho_{G,J,h}(\gamma),
\end{equation}
where, for a sequence of steps $\gamma$,\begin{equation}
	\rho_{G,J,h}(\gamma):=\langle \sigma_{\partial \gamma}\rangle_{G,J,h}\mathbf P^{\partial \gamma}_{G,J,h}[\Gamma(\n)=\gamma].
\end{equation}
Observe that, by definition, the sum in \eqref{eq:backbone expansion} is supported on sequences of steps that are consistent. 
When $J\equiv \beta$ for some $\beta\geq 0$ (resp. $h=0$), we write $\rho_{G,\beta,h}(\gamma)=\rho_{G,J,h}(\gamma)$ (resp. $\rho_{G,J}(\gamma)=\rho_{G,J,0}(\gamma)$). Additionally, if $S\subset V_\fg$, we let 
\begin{equation}\label{eq:def rho S}
\rho_{G,J,h}^S(\gamma)=\mathds{1}_{\gamma\subset S}\rho_{G,J,h}(\gamma).
\end{equation}
 Again, \cite{AizenmanDuminilSidoraviciusContinuityIsing2015} allows to define the infinite volume version of the above quantities as $G\nearrow \mathbb Z^d$. We let $\rho_{J,h}=\rho_{\mathbb Z^d,J,h}$.
 
 \vspace{5pt}
 \noindent \textbf{Notation convention.} Before turning to properties of the backbone expansion, we fix a convention used 
throughout. It will be common to explore a path $\gamma\subset G_\fg$ and then run 
a second backbone exploration in the subgraph $G_\fg\setminus\overline{\gamma} 
:= (V_\fg, E_\fg\setminus\overline{\gamma})$. We abuse notation and write 
$\langle\cdot\rangle_{G\setminus\overline{\gamma},J,h}$ for the Ising measure 
on this subgraph (where we recall that $\sigma_\fg=1$), $Z^\emptyset_{G\setminus\overline{\gamma},J,h}$ for the 
associated sourceless random current partition function, and 
$\rho_{G\setminus\overline{\gamma},J,h}$ for the backbone expansion weights. 
 \vspace{5pt}

We now collect various useful properties of the backbone expansion. For proofs, we refer to \cite[Proposition~4.4]{AizenmanFernandezCriticalBehaviorMagnetization1986}. 

\begin{Prop}The following properties hold.
\label{prop:backbone}
\begin{enumerate}
	\item[$\bullet$] If $\gamma$ is a consistent sequence of steps and $\mathsf E$ is a subset of $E_\fg$ satisfying $\gamma\cap \mathsf E=\emptyset$, then
	\begin{equation}\label{eq:propbackb1}
		\rho_{G,J,h}(\gamma)\leq \rho_{G\setminus\mathsf E,J,h}(\gamma).
	\end{equation}
	\item[$\bullet$] If $\gamma$ is a consistent sequence of steps that is the concatenation of two consistent sequences of steps $\gamma_1$ and $\gamma_2$ (we write $\gamma=\gamma_1\circ \gamma_2)$, then
	\begin{equation}\label{eq:propbackb2}
		\rho_{G,J,h}(\gamma)=\rho_{G,J,h}(\gamma_1)\rho_{G\setminus \overline{\gamma_1},J,h}(\gamma_2).
	\end{equation}
	\item[$\bullet$] If $\gamma$ is a consistent sequence of steps, then
	\begin{align}\label{eq:formula backbone weight}
	\begin{aligned}	
		\rho_{G,J,h}(\gamma)&=\prod_{e\in \gamma}\tanh(J_e)\cdot\prod_{e\in \overline{\gamma}}\cosh(J_e)\cdot\frac{Z_{G\setminus \overline{\gamma},J,h}^\emptyset}{Z_{G,J,h}^\emptyset}\\&=\prod_{e\in \gamma}\tanh(J_e)\cdot\mathbf P^{\emptyset}_{G,J,h}[\n \textup{ is even on }\overline{\gamma}],
		\end{aligned}
	\end{align}
	where we have set $J_e=h$ on edges of the form $e=x\fg$ with $x\in V$. In particular, 
	\begin{equation}
	\rho_{G,\beta,h}(\gamma)\leq \prod_{e\in \gamma}\tanh(J_e).
	\end{equation} 
\end{enumerate}
\end{Prop}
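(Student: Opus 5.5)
The plan is to prove the explicit formula \eqref{eq:formula backbone weight} first, directly from the definition $\rho_{G,J,h}(\gamma)=\langle\sigma_{\partial\gamma}\rangle_{G,J,h}\mathbf P^{\partial\gamma}_{G,J,h}[\Gamma(\n)=\gamma]$, and then to deduce \eqref{eq:propbackb2} and \eqref{eq:propbackb1} from it. By \eqref{equation correlation rcr},
\begin{equation*}
\rho_{G,J,h}(\gamma)=\frac{1}{Z^\emptyset_{G,J,h}}\sum_{\n:\,\sn=\partial\gamma,\ \Gamma(\n)=\gamma}w^G_{J,h}(\n).
\end{equation*}
The key step is to characterise the event $\{\sn=\partial\gamma,\ \Gamma(\n)=\gamma\}$ purely in terms of parities on $\overline{\gamma}$. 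I claim it equals the event that $\n_e$ is odd for every $e\in\gamma$, $\n_e$ is even for every $e\in\overline{\gamma}\setminus\gamma$, and the restriction $\n\setminus\overline{\gamma}$ is sourceless.

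For the forward inclusion, recall how the exploration proceeds. At each step it selects the earliest unexplored edge with odd current at the current vertex. Every edge explored at that step but not selected therefore precedes the selected one and carries even current, while the selected edge carries odd current. Moreover, $\n\setminus\overline{\gamma}$ is sourceless, as already observed before \eqref{eq:backbone expansion}.

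For the reverse inclusion, take a current with these parities. Its sources are $\partial\gamma$, since the odd edges inside $\overline{\gamma}$ are exactly the steps of $\gamma$ and the rest of $\n$ is sourceless. Running the exploration, one checks inductively that it reproduces the steps of $\gamma$ one by one. The parities on the explored edges force each choice. Consistency of $\gamma$ guarantees that the edges examined are the unexplored ones. The walk can only stop at an element of $(\sn\setminus\{x\})\cup\{\fg\}$, which is where $\gamma$ stops. In the two-path case of Remark~\ref{rem:writing with paths}, the $y$-exploration runs in $E_\fg\setminus\overline{\gamma_1}$, and the same reasoning applies to it. I expect this step to be the main obstacle. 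It is purely combinatorial, but the stopping rule, the ghost edges (ordered after the lattice edges at each vertex), and the two-path case must all be handled carefully.

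With the characterisation in hand, the weight factorises over $\overline{\gamma}$ and its complement. Summing $J_e^{n}/n!$ over odd $n$ gives $\sinh J_e$ for $e\in\gamma$, and over even $n$ gives $\cosh J_e$ for $e\in\overline{\gamma}\setminus\gamma$. The complement contributes $Z^\emptyset_{G\setminus\overline{\gamma},J,h}$. Writing $\sinh=\tanh\cdot\cosh$ yields the first line of \eqref{eq:formula backbone weight}. Applying the same decomposition to sourceless currents shows that
\begin{equation*}
\mathbf P^\emptyset_{G,J,h}[\n\text{ even on }\overline{\gamma}]=\prod_{e\in\overline{\gamma}}\cosh(J_e)\,\frac{Z^\emptyset_{G\setminus\overline{\gamma},J,h}}{Z^\emptyset_{G,J,h}},
\end{equation*}
which gives the second line. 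The bound $\rho\le\prod_{e\in\gamma}\tanh J_e$ then follows because a probability is at most $1$.

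For \eqref{eq:propbackb2}, consistency implies that $\overline{\gamma}$ is the disjoint union of $\overline{\gamma_1}$ and the explored set of $\gamma_2$ computed in $G\setminus\overline{\gamma_1}$. The ratio of partition functions therefore telescopes as
\begin{equation*}
\frac{Z^\emptyset_{G\setminus\overline{\gamma_1}}}{Z^\emptyset_G}\cdot\frac{Z^\emptyset_{G\setminus\overline{\gamma}}}{Z^\emptyset_{G\setminus\overline{\gamma_1}}},
\end{equation*}
and the $\tanh$ and $\cosh$ products split accordingly.

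For \eqref{eq:propbackb1}, set $A:=\overline{\gamma}\setminus\mathsf E$, which is the explored set of $\gamma$ in $G\setminus\mathsf E$; it contains every step of $\gamma$ since $\gamma\cap\mathsf E=\emptyset$. First bound $\mathbf P^\emptyset_G[\text{even on }\overline{\gamma}]\le\mathbf P^\emptyset_G[\text{even on }A]$. It then remains to show
\begin{equation*}
\frac{Z^\emptyset_{G\setminus A}}{Z^\emptyset_G}\le\frac{Z^\emptyset_{(G\setminus\mathsf E)\setminus A}}{Z^\emptyset_{G\setminus\mathsf E}}.
\end{equation*}
By \eqref{eq: relation partition functions}, the ratio $Z^\emptyset_G/Z^\emptyset_{G\setminus A}$ equals $\bigl\langle\prod_{xy\in A}e^{J_{xy}\sigma_x\sigma_y}\bigr\rangle_{G\setminus A,J,h}$, with $\sigma_\fg=1$ on ghost edges. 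Expanding each exponential as $\cosh J_{xy}+\sigma_x\sigma_y\sinh J_{xy}$ writes the integrand as a nonnegative combination of spin products $\sigma_B$. By Griffiths' inequality (Proposition~\ref{prop:consequencegriffiths}), such an expectation can only decrease when the edges of $\mathsf E$ are removed. Hence $Z^\emptyset_G/Z^\emptyset_{G\setminus A}\ge Z^\emptyset_{G\setminus\mathsf E}/Z^\emptyset_{(G\setminus\mathsf E)\setminus A}$, which is the desired inequality.
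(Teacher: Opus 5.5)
Your route is the standard argument behind \cite[Proposition~4.4]{AizenmanFernandezCriticalBehaviorMagnetization1986}, which the paper cites rather than proves. It characterises $\{\sn=\partial\gamma,\ \Gamma(\n)=\gamma\}$ by parities, sums out the current on $\overline{\gamma}$, telescopes for \eqref{eq:propbackb2}, and applies Griffiths' inequality to the ratio of partition functions for \eqref{eq:propbackb1}; those computations are correct.

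The gap is in your reverse inclusion, exactly at the stopping rule you flagged. The exploration halts at the \emph{first} visit to $(\sn\setminus\{x_0\})\cup\{\fg\}$. So a current with the prescribed parities has backbone $\gamma$ only if $\gamma$ meets $(\partial\gamma\setminus\{x_0\})\cup\{\fg\}$ at its final step and never before, and likewise for $\gamma_2$ in the two-path case. Consistency does not ensure this.

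Here is a counterexample. Take a plaquette $y,a,b,c$, labelled so that $ya\prec yc$, and a vertex $x\sim y$ outside it. Let $\gamma=(xy)(ya)(ab)(bc)(cy)$. Then $\gamma$ is consistent and $\partial\gamma=\{x,y\}$. No backbone continues past its first visit to $y$, so $\rho_{G,J,h}(\gamma)=0$. Yet the right-hand side of \eqref{eq:formula backbone weight} is positive: the current equal to $1$ on the five steps and $0$ elsewhere has your prescribed parities, and its backbone is $(xy)$, not $\gamma$. Your sentence ``the walk can only stop at an element of $(\sn\setminus\{x\})\cup\{\fg\}$, which is where $\gamma$ stops'' rules out stopping too late, not stopping too early.

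This matters even more for \eqref{eq:propbackb2}, where you apply the formula to the pieces. Even when $\gamma$ is a genuine backbone, $\gamma_1$ may end at a vertex it has already visited. Then $\langle\sigma_{\partial\gamma_1}\rangle\mathbf P^{\partial\gamma_1}[\Gamma(\n)=\gamma_1]=0$ while $\rho_{G,J,h}(\gamma)>0$, so the identity fails under the probabilistic definition of $\rho$. The statement tacitly needs one of two repairs, and your proof should make the choice explicit.
\begin{itemize}
\item[(a)] Restrict every item to sequences that reach their stopping set only at their last step, imposing this on $\gamma$, $\gamma_1$ and $\gamma_2$ in \eqref{eq:propbackb2}.
\item[(b)] Take the first line of \eqref{eq:formula backbone weight} as the \emph{definition} of $\rho_{G,J,h}$ on all consistent sequences. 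Your parity argument, with the first-hitting hypothesis added, then shows this agrees with the probabilistic weight on the sequences that actually contribute to \eqref{eq:backbone expansion}. Your telescoping and Griffiths arguments then give \eqref{eq:propbackb1} and \eqref{eq:propbackb2} verbatim for every consistent sequence.
\end{itemize}
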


\subsection{The two-point function $K$}\label{sec:2pt K}

We now introduce the two-point function $K$ described in the introduction, and state some properties of this object (old and new).

 Let $G=(V,E)$ be a subgraph of $\mathbb Z^d$ and fix $J\in (\mathbb R^+)^E$ and
$h\geq 0$. Let $x,y\in V$. By the backbone expansion introduced above (see Remark \ref{rem:writing with paths}),
\begin{equation}\label{eq:intro K 1}
	\langle \sigma_x\sigma_y\rangle_{G,J,h}=\sum_{\partial \gamma=\{x,y\}}\rho_{G,J,h}(\gamma)=\sum_{\gamma:x\rightarrow y}\rho_{G,J,h}(\gamma)+\sum_{\substack{\gamma_1:x\rightarrow \fg\\\gamma_2:y\rightarrow \fg}}\rho_{G,J,h}(\gamma_1\circ \gamma_2).
\end{equation}
The first sum in the right-hand side of \eqref{eq:intro K 1} measures contributions to $\langle \sigma_x\sigma_y\rangle_{G,J,h}$ coming from $\gamma$ not visiting $\fg$. It is reminiscent of what appears on the right-hand side of \eqref{eq:switching for truncated}. This motivates the following definition. If $S\subset V$ and $x,y\in V$, we define
\begin{equation}
	K_{G,J,h}^S(x,y):=\sum_{\gamma:x\rightarrow y}\rho_{G,J,h}^S(\gamma),
\end{equation}
with the convention $K_{G,J,h}^S(x,x)=\mathds{1}_{x\in S}$,
where we recall the definition of $\rho_{G,J,h}^S$ from \eqref{eq:def rho S}.
If $S=V$ we drop it from the notation. We also write $K_{J,h}=K_{\mathbb Z^d,J,h}$. If $J\equiv \beta$ for some $\beta\geq 0$ (resp. $h=0$) we write $K^S_{G,\beta,h}=K_{G,J,h}^S$ (resp. $K^S_{G,J}=K^S_{G,J,0}$). By \eqref{eq:propbackb1}, one has
\begin{equation}\label{eq: K^S vs K_S}
	K^S_{G,J,h}(x,y)\leq K_{S,J,h}(x,y),
\end{equation}
where we abused notation by viewing $S$ as the graph $(S,E(S))$.

We collect several useful properties of $K$. The next two results were derived in \cite{AizenmanFernandezCriticalBehaviorMagnetization1986}. We include their short proofs for the sake of completeness. The first property relates the two-point function $K$ to observables of the Ising model.
\begin{Prop}[{\hspace{1pt}\cite[Proposition~4.7]{AizenmanFernandezCriticalBehaviorMagnetization1986}}]\label{prop:trunc vs K vs untrunc} Let $G=(V,E)$ be a subgraph of $\mathbb Z^d$. For every $J\in (\mathbb R^+)^E$, every $h\geq 0$, and every $x,y\in V$,\begin{equation}
	\langle \sigma_x;\sigma_y\rangle_{G,J,h}\leq K_{G,J,h}(x,y)\leq \langle \sigma_x\sigma_y\rangle_{G,J}.
\end{equation}
\end{Prop}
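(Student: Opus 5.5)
We prove the two inequalities separately, both starting from the backbone decomposition \eqref{eq:intro K 1}.

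\emph{Upper bound.} By \eqref{eq:intro K 1}, $K_{G,J,h}(x,y)$ is the part of $\langle\sigma_x\sigma_y\rangle_{G,J,h}$ coming from backbones $\gamma:x\to y$ that avoid $\fg$. Such a $\gamma$ uses no ghost edge. However, $\overline{\gamma}$ may contain ghost edges, because of the ordering $xy\prec x\fg$.

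\begin{itemize}
\item[1.] Let $\mathsf E:=\{z\fg: z\in V\}\setminus\overline{\gamma}$. Since $\gamma\cap\mathsf E=\emptyset$, \eqref{eq:propbackb1} gives $\rho_{G,J,h}(\gamma)\leq\rho_{G\setminus\mathsf E,J,h}(\gamma)$.
\item[2.] Remove the remaining ghost edges in $\overline{\gamma}$ using the formula \eqref{eq:formula backbone weight}. The weight is $\prod_{e\in\gamma}\tanh J_e$ times the probability that a sourceless current is even on $\overline{\gamma}$. Evenness on an explored ghost edge $z\fg$ costs a factor at most one, and the constraint decouples in the product form.
\item[3.] Alternatively, and more cleanly, show directly that $\rho_{G,J,h}(\gamma)\leq\rho_{G,J,0}(\gamma)$ for every $\gamma$ avoiding $\fg$. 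By \eqref{eq:formula backbone weight}, this amounts to
\[
\mathbf P^{\emptyset}_{G,J,h}[\n\text{ even on }\overline{\gamma}]\leq\mathbf P^{\emptyset}_{G,J,0}[\n\text{ even on }\overline{\gamma}],
\]
up to the $\cosh h$ factors of the explored ghost edges. By the first line of \eqref{eq:formula backbone weight}, the left side equals $\prod_{e\in\overline{\gamma}}\cosh J_e\, Z^\emptyset_{G\setminus\overline\gamma,J,h}/Z^\emptyset_{G,J,h}$. Such partition-function ratios are expressed as Ising expectations $\langle\prod\cosh\rangle^{-1}$. Concretely, $Z_{G}/Z_{G\setminus\overline\gamma}$ is proportional to $\langle\prod_{e\in\overline\gamma}\cosh(J_e)(1+\tanh(J_e)\sigma_e)\rangle_{G\setminus\overline\gamma,J,h}$. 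Expanding the product gives a sum of correlations with nonnegative coefficients, each increasing in $h$ by Griffiths' inequality (Proposition~\ref{prop:consequencegriffiths}); ghost-edge factors are handled similarly with $\sigma_\fg=1$. Hence the ratio of partition functions decreases in $h$, giving $\rho_{G,J,h}(\gamma)\leq\rho_{G,J,0}(\gamma)$.
\item[4.] Summing over $\gamma:x\to y$ gives $K_{G,J,h}(x,y)\leq K_{G,J,0}(x,y)$. At $h=0$ all ghost weights vanish, so $K_{G,J,0}(x,y)=\langle\sigma_x\sigma_y\rangle_{G,J}$.
\end{itemize}

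\emph{Lower bound.} We must show
\[
\langle\sigma_x;\sigma_y\rangle_{G,J,h}\leq\langle\sigma_x\sigma_y\rangle_{G,J,h}-\sum_{\gamma_1,\gamma_2}\rho_{G,J,h}(\gamma_1\circ\gamma_2),
\]
that is,
\[
\langle\sigma_x\rangle_{G,J,h}\langle\sigma_y\rangle_{G,J,h}\geq\sum_{\gamma_1:x\to\fg,\;\gamma_2:y\to\fg}\rho_{G,J,h}(\gamma_1)\,\rho_{G\setminus\overline{\gamma_1},J,h}(\gamma_2),
\]
where the right side has been rewritten using \eqref{eq:propbackb2}.

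\begin{itemize}
\item[1.] For fixed $\gamma_1$, the sum over $\gamma_2$ equals $\langle\sigma_y\rangle_{G\setminus\overline{\gamma_1},J,h}$. This is the backbone expansion of the single-source current from $y$ to $\fg$ in the reduced graph: the backbone from $y$ must end at $\fg$.
\item[2.] Griffiths' inequality, with $\sigma_\fg=1$ (ghost edges act as a field), gives $\langle\sigma_y\rangle_{G\setminus\overline{\gamma_1},J,h}\leq\langle\sigma_y\rangle_{G,J,h}$.
\item[3.] Summing over $\gamma_1$ gives $\sum_{\gamma_1:x\to\fg}\rho_{G,J,h}(\gamma_1)=\langle\sigma_x\rangle_{G,J,h}$, again by the backbone expansion with sources $\{x,\fg\}$.
\end{itemize}

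The delicate point is step 1 of the lower bound. One must make sure that the decomposition of a two-source backbone into $(\gamma_1,\gamma_2)$ matches exactly the $\{x,\fg\}$-backbone followed by a $\{y,\fg\}$-backbone in $G\setminus\overline{\gamma_1}$. This must hold with the paper's modified ordering $\prec$, and every weight must factor as in \eqref{eq:propbackb2}. This requires checking that the exploration in the definition of $\Gamma$ coincides with these single-source explorations.

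For the upper bound, the main obstacle is the monotonicity in $h$ of the evenness probability on $\overline{\gamma}$ in step 3. I would first try Griffiths' inequality applied to the expanded $\prod(1+\tanh\sigma_e)$ representation, as sketched there.
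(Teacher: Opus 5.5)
Your argument is correct. The inequality $\langle\sigma_x;\sigma_y\rangle_{G,J,h}\le K_{G,J,h}(x,y)$ is proved exactly as in the paper: isolate the pairs $(\gamma_1,\gamma_2)$, factor with \eqref{eq:propbackb2}, resum $\gamma_2$ into $\langle\sigma_y\rangle_{G\setminus\overline{\gamma_1},J,h}$, and apply Griffiths' inequality. The point you flag as delicate is harmless. By \eqref{eq:formula backbone weight}, the weight $\rho_{G,J,h}(\gamma_1)$ depends only on $\gamma_1$ and not on the source set. Also, a $\gamma_1$ coming from an $\{x,y\}$-backbone never visits $y$, since the exploration would have stopped there. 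So summing over all $\gamma_1:x\to\fg$ only enlarges the sum, which is the direction you need.

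For the other inequality, the paper's proof is your Step~1 with $\mathsf E$ taken to be \emph{all} ghost edges. This is allowed because \eqref{eq:propbackb1} only requires $\gamma\cap\mathsf E=\emptyset$, which holds since $\gamma$ avoids $\fg$; then $\rho_{G\setminus\mathsf E,J,h}(\gamma)=\rho_{G,J}(\gamma)$ and one sums over $\gamma$.

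Your premise that $\overline\gamma$ may contain ghost edges ``because of the ordering $xy\prec x\fg$'' is backwards. That ordering makes $z\fg$ the last edge at $z$, so a step $zy$ with $y\in V$ never explores it. Hence $\overline\gamma\subset E$ for every $\gamma$ avoiding $\fg$; the paper uses exactly this in the proof of Proposition~\ref{prop:sufficient condition to exp decay}. Ghost edges would enter $\overline\gamma$ only under the ordering of \cite{AizenmanFernandezCriticalBehaviorMagnetization1986}, where $z\fg$ comes first.

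So Step~2 can be dropped, and Step~3, while correct, is redundant. Writing $\mathbf P^{\emptyset}_{G,J,h}[\n\text{ even on }\overline\gamma]=\bigl\langle\prod_{e\in\overline\gamma}(1+\tanh(J_e)\sigma_e)\bigr\rangle_{G\setminus\overline\gamma,J,h}^{-1}$ and noting that each term of the expansion is nonnegative and nondecreasing in $h$ is essentially the proof of \eqref{eq:propbackb1} in this special case. Its only gain is that it is self-contained and insensitive to where the ghost edges sit in $\prec$.
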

\begin{proof} For the first inequality, it suffices to observe that
\begin{align}
\begin{aligned}
	\langle \sigma_x\sigma_y\rangle_{G,J,h}=\sum_{\partial\gamma=\{x,y\}}\rho_{G,J,h}(\gamma)&=K_{G,J,h}(x,y)+\sum_{\substack{\gamma_1:x\rightarrow \mathfrak{g}\\\gamma_2:y\rightarrow \mathfrak{g}}}\rho_{G,J,h}(\gamma_1)\rho_{G\setminus\overline{\gamma_1},J,h}(\gamma_2)
	\\&=K_{G,J,h}(x,y)+\sum_{\gamma_1:x\rightarrow\mathfrak{g}}\rho_{G,J,h}(\gamma_1)\langle \sigma_y\rangle_{G\setminus\overline{\gamma_1},J,h}
	\\&\leq K_{G,J,h}(x,y)+\langle \sigma_x\rangle_{G,J,h}\langle \sigma_y\rangle_{G,J,h},
	\end{aligned}
\end{align}
where we used Proposition \ref{prop:backbone} in the first line and Griffiths' inequality in the last line. 

 The second inequality follows after observing that, since every $\gamma$ contributing to $K_{G,J,h}$ satisfies $\gamma\subset V$, \eqref{eq:propbackb1} implies that
	$\rho_{G,J,h}(\gamma)\leq \rho_{G,J}(\gamma)$, so that
	\begin{equation}
		K_{G,J,h}(x,y)=\sum_{\gamma:x\rightarrow y}\rho_{G,J,h}(\gamma)\leq \sum_{\gamma: x\rightarrow y}\rho_{G,J}(\gamma)=\langle \sigma_x\sigma_y\rangle_{G,J}.
	\end{equation}
This concludes the proof.
\end{proof}

It is unclear which bound in Proposition \ref{prop:trunc vs K vs untrunc} is sharper. The following result suggests that, on average, the two-point function $K$ behaves like the truncated two-point function. Because of our convention on $\prec$, the lower bound in \eqref{eq: prop sum K 1} differs slightly (by a factor at most $2^{2d}$) from that of \cite{AizenmanFernandezCriticalBehaviorMagnetization1986}.
\begin{Prop}[{\hspace{1pt}\cite[Proposition~4.8]{AizenmanFernandezCriticalBehaviorMagnetization1986}}]\label{prop: sum of K} Let $d\geq 2$. Let $G=(V,E)$ be a subgraph of $\mathbb Z^d$. For every $J\in (\mathbb R^+)^E$, every $h\geq 0$, and every $x\in V$,
\begin{equation}\label{eq: prop sum K 1}
	\sum_{y\in V}K_{G,J,h}(x,y)\frac{2^{-2d}\tanh(h)}{1+\langle \sigma_y\rangle_{G,J,h}\tanh(h)}\leq \langle \sigma_x\rangle_{G,J,h}\leq \sum_{y\in V}K_{G,J,h}(x,y)\tanh(h).
\end{equation}
In particular, there exists $h_0>0$ such that, if $h\in (0,h_0]$ and $J_e\leq \beta_c$ for every $e\in E$,  
\begin{equation}\label{eq: prop sum K 2}
	\sum_{y\in V}K_{G,J,h}(x,y)\leq 2^{2d+1}h^{-1}M(\beta_c,h). 
\end{equation}
\end{Prop}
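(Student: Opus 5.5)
We need to prove Proposition \ref{prop: sum of K}: both inequalities in \eqref{eq: prop sum K 1}, and then \eqref{eq: prop sum K 2}.

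\emph{Upper bound.} We start from the backbone expansion of $\langle\sigma_x\rangle_{G,J,h}=\langle\sigma_x\sigma_\fg\rangle$: it is a sum of $\rho_{G,J,h}(\gamma)$ over consistent $\gamma:x\to\fg$. Since no vertex of $V$ is a source, every such $\gamma$ ends with a step $y\fg$ for some $y\in V$, which is its unique ghost step. We write $\gamma=\gamma'\circ(y\fg)$ with $\gamma':x\to y$ a path in $V$, where $\gamma'$ is empty if $y=x$. By \eqref{eq:propbackb2},
\[
\rho_{G,J,h}(\gamma)=\rho_{G,J,h}(\gamma')\,\rho_{G\setminus\overline{\gamma'},J,h}(y\fg).
\]
By \eqref{eq:formula backbone weight}, the last factor is at most $\tanh(h)$. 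Summing over $y$ and $\gamma'$ gives
\[
\langle\sigma_x\rangle_{G,J,h}\leq\sum_{y\in V}K_{G,J,h}(x,y)\tanh(h),
\]
using the convention $K(x,x)=1$ for the empty path.

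\emph{Lower bound.} We use the same decomposition, but now bound $\rho_{G\setminus\overline{\gamma'},J,h}(y\fg)$ from below. By \eqref{eq:formula backbone weight}, it equals $\tanh(h)$ times the probability, under the sourceless measure on $G\setminus\overline{\gamma'}$, that $\n$ is even on the explored edges of the step $y\fg$. Those explored edges are the (at most $2d$) lattice edges $yz\prec y\fg$, not already removed, together with $y\fg$ itself.

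To lower bound this probability, we compare with the event that the current vanishes on these edges. Alternatively we use the ratio-of-partition-functions form in \eqref{eq:formula backbone weight} and remove the explored edges one at a time. Each removal of an edge $e$ costs a factor at least $1/(2\cosh J_e)\cdot\cosh J_e$-type, i.e. bounded below by a constant, giving a loss of $2^{-1}$ per lattice edge and hence $2^{-2d}$ in total. This is exactly the extra factor mentioned before the statement.

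The remaining ghost-edge factor is $\langle$even on $y\fg\rangle$ in a graph where $y$'s other edges are gone, roughly $\cosh(h)/(\cosh h+\sinh h\langle\sigma_y\rangle)$. Together with Griffiths' inequality (Proposition \ref{prop:consequencegriffiths}), to replace $\langle\sigma_y\rangle$ on the reduced graph by $\langle\sigma_y\rangle_{G,J,h}$, this yields the denominator $1+\langle\sigma_y\rangle_{G,J,h}\tanh h$.

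This step, which requires tracking the precise evenness probabilities so that the constant comes out as $2^{-2d}$ with the stated denominator, is the main obstacle. I would carry it out by writing $Z_{G\setminus\mathsf E}/Z_G$ as a product of single-edge ratios $\cosh(J_e)^{-1}\cdot\big(1+\tanh(J_e)\langle\sigma_u\sigma_v\rangle\big)^{-1}\cdot\cosh(J_e)$ and bounding each $\langle\sigma_u\sigma_v\rangle\leq 1$.

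\emph{Consequence \eqref{eq: prop sum K 2}.} Since $\langle\sigma_y\rangle\leq 1$, the lower bound gives
\[
\sum_y K(x,y)\leq 2^{2d}\,\frac{1+\tanh h}{\tanh h}\,\langle\sigma_x\rangle_{G,J,h}.
\]
By Griffiths' inequality, $\langle\sigma_x\rangle_{G,J,h}\leq M(\beta_c,h)$ when $J\leq\beta_c$. Finally, for $h\le h_0$ small we have $(1+\tanh h)/\tanh h\leq 2h^{-1}$, which gives \eqref{eq: prop sum K 2}.
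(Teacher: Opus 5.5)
Your proposal is correct and follows essentially the same route as the paper. It uses the same decomposition $\gamma=\gamma'\circ(y\fg)$ with \eqref{eq:propbackb2} and \eqref{eq:formula backbone weight}, Griffiths' inequality to produce the denominator $1+\langle\sigma_y\rangle_{G,J,h}\tanh(h)$, and the same deduction of \eqref{eq: prop sum K 2}. The only difference is bookkeeping in the lower bound: the paper writes the evenness probability in one step as $\cosh(h)\prod_{e\ni y}\cosh(J_e)/\langle \exp(h\sigma_y+\sum_{z\sim y}J_{zy}\sigma_z\sigma_y)\rangle$ and bounds the exponential pointwise by $\prod_{e\ni y}e^{J_e}\,e^{h\sigma_y}$, whereas you telescope over single-edge removals with factors $(1+\tanh(J_e)\langle\sigma_e\rangle)^{-1}\geq(1+\tanh J_e)^{-1}=\cosh(J_e)e^{-J_e}$, which is exactly the paper's per-edge constant.
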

\begin{proof} We prove the result for $G$ finite. It extends to infinite subgraphs of $\mathbb Z^d$ by approximation. We begin by proving \eqref{eq: prop sum K 1}, starting with the upper bound. Using the backbone expansion,
\begin{align}\label{eq: pf sum K 1}
\begin{aligned}
	\langle \sigma_x\rangle_{G,J,h}=\sum_{\gamma:x\rightarrow \mathfrak g}\rho_{G,J,h}(\gamma)&=\sum_{y\in V}\sum_{\gamma':x\rightarrow y}\rho_{G,J,h}(\gamma'\circ (y\fg))
	\\&=\sum_{y\in V}\sum_{\gamma':x\rightarrow y}\rho_{G,J,h}(\gamma')\rho_{G\setminus{\overline{\gamma'}},J,h}((y\fg))
	\\&\leq \sum_{y\in V}K_{G,J,h}(x,y)\tanh(h),
	\end{aligned}
\end{align}
	where $(y\fg)$ denotes the single step $y\fg$, and where we have used Proposition \ref{prop:backbone} in the second and third lines. We now turn to the proof of the lower bound. We extend $J$ to $(\mathbb R^+)^{E_\fg}$ by setting $J_e=h$ for every $e\in E_\fg\setminus E$. We start from the second equality in \eqref{eq: pf sum K 1}. Observe that, by definition of $\prec$, the step $(y\fg)$ erases all $zy$ for $z\sim y$ and $z\in V$. This yields
	\begin{equation}
		\overline{\gamma'\circ (y\fg)}=\overline{\gamma'}\cup \{zy\in E: z\sim y\} \cup \{y\fg\}.
	\end{equation}
	We set $C(y):=\prod_{e\in E: \:e\ni y} \cosh(J_e)$.
	Using the exact formula of $\rho_{G,J,h}(\gamma'\circ (y\fg))$ given by \eqref{eq:formula backbone weight} and the above observation, 
	\begin{align}\label{eq: pf sum K 2}
		\begin{aligned}
			\langle \sigma_x\rangle_{G,J,h}&=\sum_{y\in V}\sum_{\gamma':x\rightarrow y}\Big(\prod_{e\in \gamma'}\tanh(J_e) \prod_{e\in \overline{\gamma'}}\cosh(J_e)\Big) \tanh(h) \:C(y)\cosh(h) \frac{Z^\emptyset_{G\setminus \overline{\gamma'\circ (y\fg)},J,h}}{Z^\emptyset_{G,J,h}}
			\\&=\sum_{y\in V}C(y)\tanh(h)\sum_{\gamma':x\rightarrow y}\rho_{G,J,h}(\gamma')\frac{\cosh(h)Z^\emptyset_{G\setminus \overline{\gamma'\circ (y\fg)},J,h}}{Z^\emptyset_{G\setminus \overline{\gamma'},J,h}}
			\\&=\sum_{y\in V}C(y)\tanh(h)\sum_{\gamma':x\rightarrow y}\rho_{G,J,h}(\gamma')\frac{\cosh(h)}{\langle \exp(h\sigma_y+\sum_{z\sim y}J_{zy}\sigma_z\sigma_y)\rangle_{G\setminus\overline{\gamma'\circ (y\fg)},J,h}},
		\end{aligned}
	\end{align}
	where we used \eqref{eq: relation partition functions} to relate the partition functions of currents to those of the Ising model.
	Now, observe that, setting $C'(y):=\prod_{e\in E: \: e\ni y}\exp(J_e)$,
	\begin{align}\label{eq: pf sum K 3}
	\begin{aligned}
		\langle \exp(h\sigma_y+\sum_{z\sim y}J_{zy}\sigma_z\sigma_y)\rangle_{G\setminus\overline{\gamma'\circ (y\fg)},J,h}&\leq C'(y)\Big(\cosh(h)+\langle \sigma_y\rangle_{G\setminus\overline{\gamma'\circ (y\fg)},J,h}\sinh(h)\Big)\\ &\leq C'(y)\Big(\cosh(h)+\langle \sigma_y\rangle_{G,J,h}\sinh(h)\Big),
		\end{aligned}
	\end{align}
	where the second line follows from Griffiths' inequality.
	Plugging \eqref{eq: pf sum K 3} into \eqref{eq: pf sum K 2}, and observing that $(C(y)/C'(y))\geq 2^{-2d}$ concludes the proof of the lower bound in \eqref{eq: prop sum K 1}.
	
		Finally, \eqref{eq: prop sum K 2} follows from Griffiths' inequality and the lower bound of \eqref{eq: prop sum K 1} once we observe that for $h$ small enough $\frac{\tanh(h)}{1+\tanh(h)}\geq \tfrac{1}{2}h$. This concludes the proof.
\end{proof}

\begin{Rem}\label{rem: K and truncated} In the case $G=\mathbb Z^d$ with $d>4$ and $J\equiv\beta_c$, the combination of Proposition \ref{prop: sum of K}, Theorems \ref{thm: magnetisation} and \ref{thm: susceptibility}, and the lower bound on $\chi(\beta_c,h)$ claimed in Remark \ref{rem:suscept lb} yields
\begin{equation}
	\sum_{x\in \mathbb Z^d}K_{\beta_c,h}(0,x)\asymp \chi(\beta_c,h).
\end{equation}
This suggests that $K_{\beta_c,h}$ is a good approximation of the truncated two-point function at $(\beta_c,h)$, at least in an averaged sense.
\end{Rem}

We now turn to a (simple) new observation: we use the explicit formula for backbone weights given in \eqref{eq:formula backbone weight} to bound the derivative in $J_e$ of $K$, for some $e\in E$. 
\begin{Prop}\label{prop: derivative K} Let $d\geq 2$. There exists $C>0$ such that the following holds. Let $G=(V,E)$ be a subgraph of $\mathbb Z^d$.  Let $J\in (\mathbb R^+)^E$, $h\geq 0$, $S\subset V$, and $x,y\in V$. Let $e_0\in E$ and assume that $J_{e_0}\geq \beta_c/2$. Then,
\begin{equation}
	\frac{\partial}{\partial J_{e_0}}K_{G,J,h}^S(x,y)\leq C\sum_{\gamma:x\rightarrow y}\mathds{1}\{e_0\in \overline{\gamma}\}\rho_{G,J,h}^S(\gamma).
\end{equation}
\end{Prop}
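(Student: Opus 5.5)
We differentiate term by term in the sum defining $K^S_{G,J,h}(x,y)=\sum_{\gamma:x\rightarrow y}\rho^S_{G,J,h}(\gamma)$, using the explicit formula \eqref{eq:formula backbone weight}. For a fixed consistent $\gamma:x\rightarrow y$ contained in $S$, we write
\begin{equation*}
\rho_{G,J,h}(\gamma)=\prod_{e\in\gamma}\tanh(J_e)\cdot\prod_{e\in\overline{\gamma}}\cosh(J_e)\cdot\frac{Z^\emptyset_{G\setminus\overline{\gamma},J,h}}{Z^\emptyset_{G,J,h}}.
\end{equation*}
We split into two cases according to whether $e_0\in\overline{\gamma}$. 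The goal is to show that terms with $e_0\notin\overline{\gamma}$ have non-positive derivative. For terms with $e_0\in\overline{\gamma}$, we aim to show the derivative is at most $C\rho_{G,J,h}(\gamma)$.

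\emph{Case $e_0\notin\overline{\gamma}$.} Here only the ratio of partition functions depends on $J_{e_0}$. By \eqref{eq: relation partition functions}, this ratio equals $Z^\emptyset_{G\setminus\overline\gamma}/Z^\emptyset_{G}$, up to the $\cosh$ factors, which the second form of \eqref{eq:formula backbone weight} absorbs: it is $\mathbf P^\emptyset_{G,J,h}[\n\text{ even on }\overline\gamma]$. Equivalently, up to a $J_{e_0}$-independent constant it is the ratio of Ising partition functions on $G\setminus\overline{\gamma}$ and $G$. The logarithmic derivative in $J_{e_0}$ is then
\[
\langle\sigma_{e_0}\rangle_{G\setminus\overline{\gamma},J,h}-\langle\sigma_{e_0}\rangle_{G,J,h}\le 0
\]
by Griffiths' inequality (Proposition \ref{prop:consequencegriffiths}), since $G\setminus\overline\gamma\subset G$. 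Here $\sigma_{e_0}$ is the product of the spins at the endpoints; ghost edges are not at issue since $e_0\in E$. Hence these terms contribute non-positively.

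\emph{Case $e_0\in\overline{\gamma}$.} We use the logarithmic derivative of the product formula. The factor $\tanh(J_{e_0})$, present when $e_0\in\gamma$, contributes $\frac{1}{\sinh(J_{e_0})\cosh(J_{e_0})}\le \frac{2}{\sinh(\beta_c)}$, using $J_{e_0}\ge\beta_c/2$. This is the only place the hypothesis is used, and it is why a constant $C$ depending only on $d$ (through $\beta_c$) appears. The factor $\cosh(J_{e_0})$ contributes $\tanh(J_{e_0})\le1$. The numerator $Z^\emptyset_{G\setminus\overline\gamma}$ does not depend on $J_{e_0}$. The denominator contributes $-\partial_{J_{e_0}}\log Z^\emptyset_{G,J,h}$; writing this via the Ising partition function, it equals $-(\langle\sigma_{e_0}\rangle_{G,J,h}-\tanh J_{e_0})$ or similar, which is at most $1$ in absolute value. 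So the logarithmic derivative is bounded by $C=2/\sinh(\beta_c)+2$, and hence $\partial_{J_{e_0}}\rho\le C\rho$.

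Summing over $\gamma$ gives the claim. The sum is infinite when $G$ is infinite, but we work with finite $G$ and pass to the limit. The main technical point is this finite-to-infinite passage, since it requires justifying exchange of derivative and limit. I would instead state everything for finite $G$, or use that the bound is uniform, so the integrated inequality $K(J+t\mathbf 1_{e_0})-K(J)\le\int_0^t C\sum\mathds 1\{e_0\in\overline\gamma\}\rho\,ds$ passes to the limit. The other point needing care is the bookkeeping of the $\cosh$ normalisation between current and Ising partition functions, so that the sign in the first case is correct.
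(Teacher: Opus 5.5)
Your proposal is correct and follows essentially the paper's argument: differentiate the explicit formula \eqref{eq:formula backbone weight} term by term, use Griffiths' inequality to show that the partition-function ratio contributes non-positively, and bound the logarithmic derivative of the $\tanh/\cosh$ prefactor by a constant using $J_{e_0}\ge\beta_c/2$. One small simplification: by \eqref{eq: relation partition functions}, $Z^\emptyset_{G,J,h}$ is exactly $2^{-|V|}$ times the Ising partition function, with no $\cosh$ normalisation, so the denominator contributes exactly $-\langle\sigma_{e_0}\rangle_{G,J,h}\le 0$ and can simply be dropped, as the paper does.
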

\begin{proof} Again, it suffices to prove the result for $G$ finite. By definition of $K_{G,J,h}^S$, it is enough to bound the $J_{e_0}$-derivative of $\rho_{G,J,h}^S$. Fix $\gamma:x\rightarrow y$ which is consistent, avoids $\mathfrak{g}$, and only visits vertices of $S$. By \eqref{eq:formula backbone weight}, one has
\begin{equation}\label{eq: pf derivative 1}
	\rho_{G,J,h}^S(\gamma)=\prod_{e\in \gamma}\tanh(J_e) \cdot \prod_{e\in \overline{\gamma}}\cosh(J_e)\cdot \frac{Z_{G\setminus \overline{\gamma},J,h}^\emptyset}{Z_{G,J,h}^\emptyset},
\end{equation}
where we use the convention of Proposition \ref{prop:backbone} and set $J_e=h$ for every $e$ of the form $x\fg$, with $x\in V$.
We isolate the factors that depend on $J_{e_0}$ and rewrite \eqref{eq: pf derivative 1} as
\begin{equation}
	\rho_{G,J,h}^S(\gamma)=F_\gamma(J_{e_0})\cdot A \cdot \frac{Z_{G\setminus \overline{\gamma},J,h}^\emptyset}{Z_{G,J,h}^\emptyset},
\end{equation}
where
\begin{equation}
	F_\gamma(J_{e_0})=\big(\mathds{1}_{e_0\notin \gamma}+\tanh(J_{e_0})\mathds{1}_{e_0\in \gamma}\big)\big(\mathds{1}_{e_0\notin \overline{\gamma}}+\cosh(J_{e_0})\mathds{1}_{e_0\in \overline{\gamma}}\big),
\end{equation}
and
\begin{equation}
	A:=\prod_{e\in \gamma\setminus\{e_0\}}\tanh(J_e)\prod_{e\in \overline{\gamma}\setminus \{e_0\}}\cosh(J_e).
\end{equation}
Hence,
\begin{align}\label{eq: pf derivative 2}
\begin{aligned}
	\frac{\partial}{\partial J_{e_0}}\rho_{G,J,h}^S(\gamma)=
	F_\gamma'(&J_{e_0})\cdot A \cdot \frac{Z_{G\setminus \overline{\gamma},J,h}^\emptyset}{Z_{G,J,h}^\emptyset}
	\\&+F_\gamma(J_{e_0})\cdot A \cdot \frac{Z_{G\setminus \overline{\gamma},J,h}^\emptyset}{Z_{G,J,h}^\emptyset}\cdot \Big(\mathds{1}_{e_0\notin\overline{\gamma}}\langle \sigma_{e_0}\rangle_{G\setminus\overline{\gamma},J,h}-\langle \sigma_{e_0}\rangle_{G,J,h}\Big),
	\end{aligned}
\end{align}
where we used \eqref{eq: relation partition functions} to compute the derivative of $\tfrac{Z_{G\setminus \overline{\gamma},J,h}^\emptyset}{Z_{G,J,h}^\emptyset}$, and where $\sigma_{e_0}=\sigma_u\sigma_v$ if $e_0=uv$. By Griffiths' inequality, the second term in the above display is negative. We therefore focus on the first one. Observe that
\begin{align}\label{eq: pf derivative 3}
\begin{aligned}
F_\gamma'(J_{e_0})&=\frac{(\tanh(J_{e_0}))'}{\tanh(J_{e_0})}\mathds{1}_{e_0\in \gamma}F_\gamma(J_{e_0})+\frac{(\cosh(J_{e_0}))'}{\cosh(J_{e_0})}\mathds{1}_{e_0\in \overline{\gamma}}F_\gamma(J_{e_0})
\\&\leq C \mathds{1}_{e_0\in \overline{\gamma}}F_\gamma(J_{e_0}),
\end{aligned}
\end{align}
where $C=C(d)>0$ is given by the assumption that $J_{e_0}\geq \beta_c/2$. Plugging \eqref{eq: pf derivative 3} into \eqref{eq: pf derivative 2} gives
\begin{equation}
	\frac{\partial}{\partial J_{e_0}}\rho_{G,J,h}^S(\gamma)\leq C\mathds{1}_{e_0\in \overline{\gamma}}\rho^S_{G,J,h}(\gamma).
\end{equation}
Summing the above displayed equation over $\gamma:x\rightarrow y$ avoiding $\mathfrak{g}$ and visiting only vertices of $S$ concludes the proof.
\end{proof}

\section{A Simon--Lieb-type inequality for $K$}\label{sec:SL for K}
By Proposition \ref{prop:trunc vs K vs untrunc}, $K_{\beta,h}(0,x)$ provides an upper bound on $\langle\sigma_0;\sigma_x\rangle_{\beta,h}$. The main advantage of the former compared to the latter is that it satisfies a form of Simon--Lieb inequality. Below, we let $\partial^{\textup{ext}}S:=\{y\in \mathbb Z^d\setminus S: \exists x\in S, \: x\sim y\}$ and $\overline{S}=S\cup \partial^{\textup{ext}}S$.

\begin{Lem}[A Simon--Lieb-type inequality for $K$]\label{lem: SL for K} Let $S\subset \mathbb Z^d$ be finite with $0\in S$, and let $E\subset E(\mathbb Z^d)$. For every $x\notin S$,
\begin{equation}
	K_{E^c,\beta,h}(0,x)\leq \Psi_{E^c,\beta,h}(S)\cdot \max_{v\in \partial^{\textup{ext}}S}\max_{\substack{\gamma:0\rightarrow v\\\gamma\subset\overline{S}}}K_{(E\cup\overline{\gamma})^c,\beta,h}(v,x),
\end{equation}
with \begin{equation}
	\Psi_{E^c,\beta,h}(S):=\sum_{\substack{u\in S\\v\notin S\\u\sim v}}K_{E^c,\beta,h}^S(0,u)\cdot \beta,
\end{equation}
where we abused notation by viewing $E^c$ as the graph $(\mathbb Z^d_\fg,E(\mathbb Z^d_\fg)\setminus E)$. 
\end{Lem}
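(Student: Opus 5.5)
We decompose each backbone path $\gamma:0\rightarrow x$ contributing to $K_{E^c,\beta,h}(0,x)$ at its first exit from $S$. Since $0\in S$, $x\notin S$, and $\gamma$ avoids $\fg$, the path $\gamma$ has a first step $uv$ with $u\in S$ and $v\notin S$. We then write $\gamma=\gamma_1\circ(uv)\circ\gamma_2$, where $\gamma_1:0\rightarrow u$ stays in $S$ and $\gamma_2:v\rightarrow x$ avoids $\fg$. The concatenation property \eqref{eq:propbackb2}, applied twice in the graph $E^c$, gives
\begin{equation*}
\rho_{E^c,\beta,h}(\gamma)=\rho_{E^c,\beta,h}(\gamma_1)\,\rho_{E^c\setminus\overline{\gamma_1},\beta,h}((uv))\,\rho_{E^c\setminus\overline{\gamma_1\circ(uv)},\beta,h}(\gamma_2).
\end{equation*}
By \eqref{eq:formula backbone weight}, the middle factor is at most $\tanh(\beta)\leq\beta$. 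The first factor equals $\rho^S_{E^c,\beta,h}(\gamma_1)$, because $\gamma_1\subset S$.

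Next we sum over all such decompositions. Distinct $\gamma$ give distinct triples $(\gamma_1,uv,\gamma_2)$, so we may bound the sum over $\gamma$ by the sum over all triples. The sum over $\gamma_2$ is then bounded by $K_{(E\cup\overline{\gamma_1\circ(uv)})^c,\beta,h}(v,x)$, where $E\cup\overline{\gamma_1\circ(uv)}$ denotes the removed edge set. This quantity has the form required in the maximum of the statement: $\gamma:=\gamma_1\circ(uv)$ runs from $0$ to $v\in\partial^{\textup{ext}}S$, and it lies in $\overline S$ because $\gamma_1\subset S$ and $v\in\partial^{\textup{ext}}S$.

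One point needs care. The explored set $\overline{\gamma_1\circ(uv)}$ can contain edges incident to vertices of $\gamma_1$ that leave $\overline S$, as well as ghost edges $z\fg$. The explored edges of a step $zw$ are the edges $zz'$ with $zz'\preceq zw$, and all of them are incident to $z\in S$. Since the statement defines $\overline{\gamma}$ through the fixed ordering $\prec$, this causes no problem. The only requirement is that the maximum runs over the same object $K_{(E\cup\overline{\gamma})^c}$ with $\gamma$ a path from $0$ to $v$ whose vertices lie in $\overline S$. I will check that "$\gamma\subset\overline S$" refers to vertices, which it does under the convention of \eqref{eq:def rho S}.

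Bounding the $\gamma_2$-sum by this maximum and pulling it out leaves
\begin{equation*}
\sum_{\substack{u\in S,\,v\notin S\\u\sim v}}\beta\sum_{\gamma_1:0\rightarrow u}\rho^S_{E^c,\beta,h}(\gamma_1)=\Psi_{E^c,\beta,h}(S).
\end{equation*}
Two further points need checking. First, the case $u=0$ with $\gamma_1$ empty is covered by the convention $K^S(0,0)=1$. Second, we must verify that $\gamma_1$, $uv$ and $\gamma_2$ are each consistent, so that \eqref{eq:propbackb2} applies; this holds because they are pieces of the consistent sequence $\gamma$.

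The main obstacle is the bookkeeping in the concatenation step. We must confirm that the weight of $\gamma_2$ is computed in the graph with $\overline{\gamma_1\circ(uv)}$ removed, and not with $\overline{\gamma_1}$ alone. This is precisely why the right-hand side involves $K$ on a modified graph rather than $K_{E^c}$ itself. It also explains why monotonicity in the volume is not needed: we never compare $K$ across different graphs, we only take a maximum.
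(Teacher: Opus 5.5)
Your decomposition is the paper's own argument: first exit from $S$, concatenation twice, $\tanh\beta\le\beta$ for the exit step, and a maximum over the remainder. There is, however, an unjustified step, and the paper's write-up passes over it in the same way. It is the identification $\sum_{\gamma_1:0\rightarrow u}\rho^S_{E^c,\beta,h}(\gamma_1)=K^S_{E^c,\beta,h}(0,u)$.

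The piece $\gamma_1$ need not be a backbone from $0$ to $u$. The exploration towards $x$ does not stop at $u$. It may reach $u$, leave through an odd edge $uw\prec uv$ with $w\in S$, return to $u$ through an unexplored edge, and only then exit through $uv$; this already happens around a plaquette of $S$. Likewise, for $u=0$ the piece $\gamma_1$ may be a non-empty loop at $0$, which the convention $K^S(0,0)=1$ does not cover. A backbone with sources $\{0,u\}$, by contrast, stops at its first arrival at $u$.

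This causes a problem under either reading of $\rho$. If $\rho$ is read probabilistically, such $\gamma_1$ have weight $0$ and the concatenation identity fails. If $\rho$ is read through \eqref{eq:formula backbone weight}, the identity holds. But then $\sum_{\gamma_1}\rho^S(\gamma_1)$ counts each current with sources $\{0,u\}$ once for every return of its continued exploration to $u$ inside $S$, so it can strictly exceed $K^S(0,u)$. Bounding the middle factor by $\tanh\beta$ separately throws away exactly the information that prevents this overcounting.

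The fix is to keep the exit step attached to $\gamma_1$. Comparing odd and even values of the current on $uv$ in \eqref{eq:formula backbone weight} gives
\[
\rho_{E^c,\beta,h}(\gamma_1\circ(uv))=\frac{\tanh(\beta)}{Z^{\emptyset}}\sum_{\mathbf n':\,\partial\mathbf n'=\{0,u\}}w(\mathbf n')\,\mathds{1}\{\mathbf n'\in A_{\gamma_1}\}.
\]
Here $A_{\gamma_1}$ is the event that $\mathbf n'$ is odd on the steps of $\gamma_1$, even on $\overline{\gamma_1}\setminus\gamma_1$, and even on every edge explored by the step $uv$ after $\gamma_1$, including $uv$ itself. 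The weights $w$ and $Z^\emptyset$ refer to the graph $E^c$.

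Fix $\mathbf n'$. The $\gamma_1$ with $\mathbf n'\in A_{\gamma_1}$ are among the prefixes of the deterministic exploration of $\mathbf n'$ from $0$ (always take the earliest unexplored odd edge) that end at a visit to $u$. Suppose the event holds at some visit. Then every unexplored $uz\preceq uv$ is even there, so if the exploration leaves $u$ it does so through an edge later than $uv$, and $uv$ becomes explored. No later prefix can then be followed by the step $uv$, so at most one $\gamma_1$ contributes. Its prefix up to the first visit to $u$ is the backbone $\Gamma(\mathbf n')$, which lies in $S$ and avoids $\mathfrak g$.

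Hence $\sum_{\gamma_1}\rho_{E^c,\beta,h}(\gamma_1\circ(uv))\le\tanh(\beta)\,K^S_{E^c,\beta,h}(0,u)$. For $u=0$, $\mathbf n'$ is sourceless and the bound is $\tanh\beta$. With this bound the rest of your argument goes through.

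A minor correction: with the paper's ordering ($xy\prec x\mathfrak g$), a step between lattice vertices never explores a ghost edge, so $\overline{\gamma_1\circ(uv)}\subset E(\mathbb Z^d)$. This does not affect the argument.
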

\begin{proof} Let $\gamma:0\rightarrow x$ be a consistent path contributing to $K_{E^c,\beta,h}(0,x)$. Since $0\in S$ and $x\notin S$, we may write (in a unique way) $\gamma=\gamma_1\circ(uv)\circ\gamma_2$ where $u\in S$, $v\notin S$ with $u\sim v$, $\gamma_1:0\rightarrow u\subset S$ and $\gamma_2:v\rightarrow x$. Using this decomposition and Proposition \ref{prop:backbone} yields
\begin{align}
\begin{aligned}
	K_{E^c,\beta,h}(0,x)&=\sum_{\substack{u\in S\\v\notin S\\ u\sim v}}\sum_{\gamma_1:0\rightarrow u}\rho_{E^c,\beta,h}^S(\gamma_1)\rho_{(E\cup \overline{\gamma_1})^c,\beta,h}((uv))K_{(E\cup \overline{\gamma_1\circ (uv)})^c,\beta,h}(v,x)
	\\&\leq \sum_{\substack{u\in S\\v\notin S\\u\sim v}}K_{E^c,\beta,h}^S(0,u)\, \beta  \max_{v\in \partial^{\textup{ext}}S}\max_{\substack{\gamma:0\rightarrow v\\\gamma\subset \overline{S}}}K_{(E\cup\overline{\gamma})^c,\beta,h}(v,x),
	\end{aligned}
\end{align}
		which concludes the proof.
\end{proof}

\begin{Rem}\label{rem:comp psi phi}
(1) Observe that $\Psi_{E^c,\beta,h}(S)$ can be seen as the counterpart of $\varphi_\beta(S)$ in the setting where $h=0$. Because of Lemma \ref{lem: SL for K}, this quantity is (perhaps surprisingly) more natural to consider than 
\begin{equation}
	\sum_{\substack{u\in S\\v\notin S\\u \sim v}}\langle \sigma_0;\sigma_u\rangle_{S,\beta,h}\, \beta.
\end{equation}
Moreover, combining \eqref{eq: K^S vs K_S}, Proposition \ref{prop:trunc vs K vs untrunc}, and Griffiths' inequality gives that, for every $S\subset \mathbb Z^d$ and $E\subset E(\mathbb Z^d)$ as above, one has
\begin{equation}\label{eq:BOUND PSI BY PHI}
	\Psi_{E^c,\beta,h}(S)\leq \varphi_\beta(S).
\end{equation}
%
%

(2) Due to a lack of monotonicity in the volume for the two-point function $K$, we have to keep the restriction to the complement of $\overline{\gamma}$ in Lemma \ref{lem: SL for K}. This explains why we parametrised $\Psi$ in terms of the complement of an edge set $E$: in iterations of Lemma \ref{lem: SL for K}, we will see quantities of the form $\Psi_{E^c,\beta,h}(S)$, with an \emph{evolving} set $E\subset E(\mathbb Z^d)$. This is an additional technical difficulty compared to the case $h=0$.
\end{Rem}

Recall from the introduction that the Simon--Lieb inequality provides a way to upper bound $\xi(\beta,0)$ when $\beta<\beta_c$, by finding a finite set $S$ containing $0$ such that $\varphi_\beta(S)<1$. Lemma \ref{lem: SL for K}, when combined with Proposition \ref{prop:trunc vs K vs untrunc}, allows to show that a similar mechanism holds when $h>0$.

\begin{Prop}\label{prop:sufficient condition to exp decay} Let $d\geq 4$. There exist $c,C>0$ such that the following holds. Let $(\beta,h)\in \mathcal Q^*$. Assume that there exists a finite set $S$ containing $0$ and such that
\begin{equation}\label{eq: assumption to get exp decay}
	\sup_{E\subset E(\mathbb Z^d)}\Psi_{E^c,\beta,h}(S)\leq \frac{1}{2}.
\end{equation}
Then, for every $x\in \mathbb Z^d$,
\begin{equation}
	\sup_{E\subset E(\mathbb Z^d)}K_{E^c,\beta,h}(0,x)\leq \frac{C}{1\vee |x|^{d-2}}\exp\left(-c\frac{|x|}{\operatorname{diam}(S)+1}\right).
\end{equation}
\end{Prop}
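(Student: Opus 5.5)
Set $n:=\operatorname{diam}(S)$ and $k:=\lfloor |x|/(n+1)\rfloor$. The plan is to iterate Lemma~\ref{lem: SL for K} $k$ times, which yields the exponential factor. The polynomial prefactor comes from Proposition~\ref{prop:trunc vs K vs untrunc} together with \eqref{eq:IRB}. For every $E$ and every pair of vertices $a,b$ we have
\begin{equation*}
K_{E^c,\beta,h}(a,b)\leq \langle\sigma_a\sigma_b\rangle_{E^c,\beta}\leq \langle\sigma_a\sigma_b\rangle_{\beta_c}\leq \frac{C}{1\vee|a-b|^{d-2}}.
\end{equation*}
This uses Griffiths' inequality twice: once to pass from the graph $E^c$ to $\mathbb Z^d$, and once to raise $\beta\leq\beta_c$ to $\beta_c$. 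All of these bounds are uniform in $E$.

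\emph{Iteration.} Translation invariance gives $\sup_E\Psi_{E^c,\beta,h}(S+v)=\sup_E\Psi_{E^c,\beta,h}(S)\leq\tfrac12$, where the translated quantity has its root at $v$. Lemma~\ref{lem: SL for K} is stated with root $0$, but its proof works verbatim with root $v$ and the set $S+v$. Applying it once gives
\begin{equation*}
K_{E^c,\beta,h}(0,x)\leq \tfrac12\max_{v\in\partial^{\rm ext}S}\ \max_{\gamma}K_{(E\cup\overline\gamma)^c,\beta,h}(v,x).
\end{equation*}
The right-hand side is again a $K$ on the complement of some edge set, now rooted at $v$ with $|v|\leq n+1$. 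The supremum over $E$ absorbs the evolving edge set $E\cup\overline\gamma$. Set $f(y):=\sup_E K_{E^c,\beta,h}(y,x)$. We obtain $f(y)\leq \tfrac12\max_{v:\,|v-y|\leq n+1}f(v)$ whenever $x\notin S+y$, which holds in particular if $|x-y|>n$. Iterating $j$ times from $y=0$, the successive roots lie within distance $j(n+1)$ of $0$. So we may continue as long as $|x|-j(n+1)>n$. Stopping at $j:=\max(0,\lfloor |x|/(2(n+1))\rfloor)$, say, we get
\begin{equation*}
f(0)\leq 2^{-j}\max_{|w|\leq j(n+1)}f(w).
\end{equation*}
At the final point $w$ we have $|x-w|\geq |x|/2$.

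\emph{Conclusion.} Bound the final $f(w)$ by the uniform estimate above: $f(w)\leq C/(1\vee|x-w|^{d-2})\leq C2^{d-2}/(1\vee|x|^{d-2})$. Multiplying by $2^{-j}$ with $j\geq |x|/(2(n+1))-1$ gives the claim with $c=(\log 2)/2$. When $|x|\leq 2(n+1)$ there is no iteration, and the bound is just the uniform polynomial estimate.

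The only delicate point is bookkeeping rather than analysis. We must check that the root of the next application genuinely moves: $v$ is adjacent to $S+y$, so $|v-y|\leq n+1$. We must also check that the argument never needs monotonicity of $K$ in the volume. The supremum over $E$ handles this, since each application introduces a new removed set $E\cup\overline{\gamma}$ and both the hypothesis and the polynomial bound are uniform in $E$. Finally, the polynomial bound must hold for $K$ on the edge-deleted graph $E^c$, which includes edges to $\fg$ with field $h$. Proposition~\ref{prop:trunc vs K vs untrunc} applies to any subgraph, and it removes the field before Griffiths' inequality is used, so this is fine.
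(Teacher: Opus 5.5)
Your proof is correct and follows the paper's argument. You iterate Lemma~\ref{lem: SL for K} about $|x|/(2(\operatorname{diam}(S)+1))$ times with translates of $S$, let the supremum over $E$ absorb the evolving set $E\cup\overline{\gamma}$, and bound the final factor with Proposition~\ref{prop:trunc vs K vs untrunc}, Griffiths' inequality and \eqref{eq:IRB}. You leave one check implicit that the paper states: for paths $\gamma$ avoiding $\fg$ one has $\overline{\gamma}\subset E(\mathbb Z^d)$, because the convention $xy\prec x\fg$ means no ghost edge is ever explored, so $E\cup\overline{\gamma}$ is admissible in the supremum of \eqref{eq: assumption to get exp decay}.
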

\begin{proof} Fix $E_0\subset E(\mathbb Z^d)$. Assume first that $|x|\leq 10(\operatorname{diam}(S)+1)$. By Proposition \ref{prop:trunc vs K vs untrunc}, Griffiths' inequality, and \eqref{eq:IRB}, one has \begin{equation}\label{eq:pf suff 1}
	K_{E_0^c,\beta,h}(0,x)\leq  \langle \sigma_0\sigma_x\rangle_{\beta_c}\leq \frac{C_1}{1\vee |x|^{d-2}}.
\end{equation}
We now assume that $|x|>10(\operatorname{diam}(S)+1)$. Iterating Lemma \ref{lem: SL for K} $k$ times with translates of $S$, where $k:=\lfloor\tfrac{|x|}{2(\operatorname{diam}(S)+1)}\rfloor$, yields
\begin{align}	
\begin{aligned}
	K_{E_0^c,\beta,h}(0,x)&\leq \overline{\Psi}_{\beta,h}(S)^k\cdot \max_{v\in \Lambda_{k(\operatorname{diam}(S)+1)}}\max_{\substack{\gamma:0\rightarrow v\subset \Lambda_{k(\operatorname{diam}(S)+1)} \\\textup{consistent}}}K_{(E_0\cup\overline{\gamma})^c,\beta,h}(v,x)\\&\lesssim 2^{-k}\cdot  \frac{1}{1\vee |x|^{d-2}},
\end{aligned}
\end{align}
where $\overline{\Psi}_{\beta,h}(S):=\sup_{E\subset E(\mathbb Z^d)}\Psi_{E^c,\beta,h}(S)$, and
where we used again Proposition \ref{prop:trunc vs K vs untrunc}, Griffiths' inequality, and \eqref{eq:IRB}, to obtain that, for $v,\gamma$ as above, 
\begin{equation}
K_{(E_0\cup\overline{\gamma})^c,\beta,h}(v,x)\leq \langle \sigma_v\sigma_x\rangle_{\beta_c}\lesssim |v-x|^{2-d}\lesssim |x|^{2-d}.
\end{equation} 
Note that we have a supremum over $E\subset E(\mathbb Z^d)$ above: this is because, in every iteration of Lemma \ref{lem: SL for K}, the ``evolving set'' $E$ is changed into $E\cup \overline{\gamma}$ for some consistent path $\gamma\subset \mathbb Z^d$, the latter edge set being a subset of $E(\mathbb Z^d)$ by definition of $\prec$. This concludes the proof.
\end{proof}

In the case $h=0$, it is not clear how to control $\varphi_\beta(S)$ for a fixed set $S$. In \cite{DuminilPanis2024newLB}, the authors constructed a \emph{random} set $\mathcal S$ for which, on average, $\varphi_\beta(\mathcal S)$ is ``small'', with explicit control of $\operatorname{diam}(\mathcal S)$ in terms of $\beta$. This motivates the following extension of Proposition \ref{prop:sufficient condition to exp decay}.

\begin{Prop}\label{prop:sufficient condition to exp decay random} Let $d\geq 4$. There exist $c,C>0$ such that the following holds. Let $(\beta,h)\in \mathcal Q^*$ and $n \geq 1$. Let $\mathcal S_n$ be a random subset of $\Lambda_n$ that almost surely contains $0$. Let $\mathbb{P}$ denote the law of $\mathcal{S}_n$, and $\mathbb{E}$ the corresponding expectation. Assume that 
\begin{equation}\label{eq: random assumption to get exp decay}
	\sup_{E\subset E(\mathbb Z^d)}\mathbb E[\Psi_{E^c,\beta,h}(\mathcal S_n)]\leq \frac{1}{2}.
\end{equation}
Then, for every $x\in \mathbb Z^d$,
\begin{equation}
	\sup_{E\subset E(\mathbb Z^d)}K_{E^c,\beta,h}(0,x)\leq \frac{C}{1\vee |x|^{d-2}}\exp\left(-c\frac{|x|}{n}\right).
\end{equation}
\end{Prop}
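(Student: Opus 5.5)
We follow the proof of Proposition~\ref{prop:sufficient condition to exp decay}, but average the Simon--Lieb-type inequality of Lemma~\ref{lem: SL for K} over independent copies of the random set $\mathcal S_n$. The bound $K_{E_0^c,\beta,h}(0,x)\leq \langle\sigma_0\sigma_x\rangle_{\beta_c}\lesssim (1\vee|x|^{d-2})^{-1}$ (Proposition~\ref{prop:trunc vs K vs untrunc}, Griffiths' inequality and \eqref{eq:IRB}) handles $|x|\leq 10(n+1)$. We may therefore assume $|x|>10(n+1)$ and set $k:=\lfloor |x|/(4(n+1))\rfloor$.

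The key step is an averaged one-step inequality. Write $\mathcal S$ for a copy of $\mathcal S_n$, which is a subset of $\Lambda_n$ containing $0$. For $E$ fixed, $x\notin\Lambda_n$, and any deterministic $S$ in the support, the proof of Lemma~\ref{lem: SL for K} gives the sharper, unmaximised form
\begin{equation*}
K_{E^c,\beta,h}(0,x)\leq \sum_{\substack{u\in S,\,v\notin S\\u\sim v}}\sum_{\gamma_1:0\to u}\rho^{S}_{E^c,\beta,h}(\gamma_1)\,\beta\, K_{(E\cup\overline{\gamma_1\circ(uv)})^c,\beta,h}(v,x).
\end{equation*}
We then define $f_j(y):=\sup_{E}\sup K_{E^c,\beta,h}(y,x)$, where the inner supremum runs over starting points $y$ at distance at least $|x|-j(n+1)$ from $x$ (or simply over $y\in\Lambda_{j(n+1)}$ after recentring). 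Taking expectation in $\mathcal S$ and bounding the last factor by the supremum over $E$ and over $v\in\Lambda_{n+1}$ gives
\begin{equation*}
\sup_{E}K_{E^c,\beta,h}(0,x)\leq \sup_E\mathbb E[\Psi_{E^c,\beta,h}(\mathcal S)]\cdot \sup_{E}\max_{v\in\Lambda_{n+1}}K_{E^c,\beta,h}(v,x)\leq \tfrac12\sup_E\max_{v\in\Lambda_{n+1}}K_{E^c,\beta,h}(v,x).
\end{equation*}
This works because the deterministic bound holds for every realisation, so the expectation is harmless: the left-hand side does not depend on $\mathcal S$. The crucial point is that the $K$-factor has been maximised over everything that depends on $\mathcal S$ \emph{before} the expectation is taken, so that only $\Psi$ is averaged. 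Translation invariance of the infinite-volume model, with $E$ translated accordingly, lets us apply the same bound from any starting vertex $v$ with the translated random set $v+\mathcal S$. The hypothesis is a supremum over all $E$, so it is invariant under translating $E$.

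Iterating then gives, for $j<k$,
\begin{equation*}
a_j:=\sup_E\max_{v\in\Lambda_{j(n+1)}}K_{E^c,\beta,h}(v,x)\leq \tfrac12\, a_{j+1},
\end{equation*}
valid as long as $x\notin v+\Lambda_n$, which holds since $|x|>2k(n+1)$. Hence $\sup_E K_{E^c,\beta,h}(0,x)\leq a_0\leq 2^{-k}a_k$. Finally, $a_k\lesssim |x|^{2-d}$ by Proposition~\ref{prop:trunc vs K vs untrunc}, Griffiths and \eqref{eq:IRB}, since $|v-x|\geq|x|/2$ for $v\in\Lambda_{k(n+1)}$. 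This yields the claim with $c=\log 2/8$.

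The main obstacle I expect is justifying that the maximum over the exit vertex and over the evolving edge set $E\cup\overline{\gamma_1\circ(uv)}$ can be taken uniformly, so that no correlation between $\mathcal S$ and the downstream factor remains. This is exactly why the hypothesis takes a supremum over $E$ outside the expectation and why the sequence $a_j$ is defined with a supremum over $E$. I also need to check that $\overline{\gamma_1\circ(uv)}$ only contains edges of $E(\mathbb Z^d)$; it never contains ghost edges, since $\gamma_1$ avoids $\fg$ and ghost edges come last in $\prec$. This check is needed so that the new set stays admissible in the supremum.
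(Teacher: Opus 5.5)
Your proposal is correct and follows essentially the same route as the paper. You apply Lemma~\ref{lem: SL for K} to a fixed realisation and bound the downstream $K$-factor by a maximum over $v\in\Lambda_{n+1}$ and over admissible edge sets, which contain no ghost edges by the choice of $\prec$, so that only $\Psi$ is averaged before iterating. Your sequence $a_j$ of suprema over $E$ and over starting points in $\Lambda_{j(n+1)}$ just spells out the paper's ``reproduce the argument of Proposition~\ref{prop:sufficient condition to exp decay}'' step; the auxiliary $f_j$ and the independent copies of $\mathcal S_n$ are never actually used.
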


\begin{proof} Fix $E_0\subset E(\mathbb Z^d)$. If $|x|\leq 10n$, we may apply the same reasoning as in \eqref{eq:pf suff 1}. We therefore assume that $|x|>10n$.  Applying Lemma \ref{lem: SL for K} to a sample of $\mathcal S_n$ yields
\begin{align}
\begin{aligned}
	K_{E_0^c,\beta,h}(0,x)&\leq \Psi_{E_0^c,\beta,h}(\mathcal S_n)\cdot \max_{v\in \partial^{\textup{ext}}\mathcal S_n}\max_{\substack{\gamma:0\rightarrow v\\\gamma\subset\overline{\mathcal S_n}}}K_{(E_0\cup\overline{\gamma})^c,\beta,h}(v,x)\\&\leq  \Psi_{E_0^c,\beta,h}(\mathcal S_n)\cdot \max_{v\in \Lambda_{n+1}}\max_{\substack{\gamma:0\rightarrow v\\\gamma\subset\Lambda_{n+1}}}K_{(E_0\cup\overline{\gamma})^c,\beta,h}(v,x)
	\end{aligned}
\end{align}
Averaging over $\mathcal S_n$ and taking the supremum then gives
\begin{equation}
	K_{E_0^c,\beta,h}(0,x)\leq \sup_{E\subset E(\mathbb Z^d)}\mathbb E[\Psi_{E^c,\beta,h}(\mathcal S_n)]\cdot \max_{v\in \Lambda_{n+1}}\max_{\substack{\gamma:0\rightarrow v\\\gamma\subset\Lambda_{n+1}}}K_{(E_0\cup\overline{\gamma})^c,\beta,h}(v,x),
\end{equation}
from which one can reproduce the argument used in the proof of Proposition \ref{prop:sufficient condition to exp decay}.
\end{proof}

As a consequence of Propositions \ref{prop:trunc vs K vs untrunc} and \ref{prop:sufficient condition to exp decay random}, the upper bound of Theorem \ref{thm:main} follows if we find a random subset $\mathcal S_n$ of $\Lambda_n$ with $n\lesssim \ell(\beta,h)$ that satisfies \eqref{eq: random assumption to get exp decay}.

\begin{Thm}\label{thm:finding the good random S} Let $d>4$. There exist $C,\delta >0$ such that the following holds for every $(\beta,h)\in \mathcal Q_\delta$. There exist $n\geq 1$ and a random set $\mathcal S_n\subset \Lambda_n$ of law $\mathbb P$ that almost surely contains $0$ such that the following holds:
\begin{enumerate}
	\item[$(i)$] $n\leq C\ell(\beta,h)$;
	\item[$(ii)$] $\sup_{E\subset E(\mathbb Z^d)}\mathbb E[\Psi_{E^c,\beta,h}(\mathcal S_n)]\leq \tfrac{1}{2}$, where $\mathbb E$ denotes the expectation with respect to $\mathbb P$.
\end{enumerate}
\end{Thm}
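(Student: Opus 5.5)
We split according to which term realises $\ell(\beta,h)$. In the thermal regime, $\ell(\beta,h)=\ell(\beta,0)$, we take $\beta':=\beta$. In the magnetic regime, $\ell(\beta,h)=h^{-1/3}$, we take $\beta':=\beta_c-c_0h^{2/3}\le \beta$, with $c_0>0$ small to be fixed; then $\ell(\beta',0)=c_0^{-1/2}h^{-1/3}$. Theorem \ref{thm:sharplength} gives a scale $n\lesssim \ell(\beta',0)\lesssim_{c_0}\ell(\beta,h)$ at which $\varphi_{\beta'}$ is small, which yields $(i)$. The random set $\mathcal S_n$ will be the one of \cite{DuminilPanis2024newLB}, built from a folded sourceless random current, chosen so that $\mathbb E[\varphi_{\beta'}(\mathcal S_n)]\le \tfrac14$. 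This is the averaged version of $L(\beta')\lesssim\ell(\beta',0)$, and I will take it from the properties of $\mathcal S_n$ (Proposition \ref{prop:properties of the random set}).

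In the thermal regime, \eqref{eq:BOUND PSI BY PHI} gives $\Psi_{E^c,\beta,h}(\mathcal S_n)\le\varphi_\beta(\mathcal S_n)$ for every $E$, which gives $(ii)$ directly. In the magnetic regime we interpolate in the coupling, with $J$ moving from $\beta'$ up to $\beta$ (when $\beta\ge\beta'$; otherwise we take $\beta'=\beta$ and use the thermal argument). For fixed $E$ and fixed $S$, the fundamental theorem of calculus gives
\[
\Psi_{E^c,\beta,h}(S)\le \tfrac{\beta}{\beta'}\Psi_{E^c,\beta',h}(S)+\beta\int_{\beta'}^{\beta}\sum_{e_0}\partial_{J_{e_0}}\sum_{u\in S,v\notin S,u\sim v}K^{S}_{E^c,t,h}(0,u)\,dt.
\]
The first term is at most $(1+o(1))\varphi_{\beta'}(S)$ by \eqref{eq:BOUND PSI BY PHI}. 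For the derivative we apply Proposition \ref{prop: derivative K}, valid since $t\ge\beta_c/2$, and sum over $e_0\in E(S)$. This produces $C\sum_{\gamma:0\to u}|\overline\gamma\cap E(S)|\rho^S(\gamma)$, which overcounts the length of $\gamma$. Decomposing at the step exploring $e_0=wz$ via \eqref{eq:propbackb2} and \eqref{eq:propbackb1}, we get the bound
\[
C\sum_{w\in S}K_{E^c,t,h}(0,w)\sum_{u\in S,\,v\notin S}\langle\sigma_w\sigma_u\rangle_{S,\beta_c}\beta_c,
\]
where the second factor $K$ is bounded by the zero-field two-point function (Proposition \ref{prop:trunc vs K vs untrunc} with Griffiths) restricted to $S$. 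This second factor is $\varphi_{\beta_c}(S-w)$ up to translation. The derivative is therefore at most $C\sum_{w\in S}K_{E^c,t,h}(0,w)\,\varphi_{\beta_c}(S-w)$.

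Averaging over $\mathcal S_n$ is the main obstacle: we need $\mathbb E[\sum_{w}K(0,w)\varphi_{\beta_c}(\mathcal S_n-w)]\lesssim \sum_w K(0,w)$ uniformly in $E$ and $t$. I would get this from a translated-averaged estimate $\sup_{w\in\Lambda_n}\mathbb E[\mathds 1_{w\in\mathcal S_n}\varphi_{\beta_c}(\mathcal S_n-w)]\le C_2$, which should be the content of Corollary \ref{coro:bound translate random s_n}. The difficulty is that $K(0,w)$ is deterministic but the set is random, so we need a bound uniform in the position $w$ within $\mathcal S_n$. Proving it would require revisiting the random current estimates of \cite{DuminilPanis2024newLB}, using the infrared bound \eqref{eq:IRB} and $d>4$ to sum boundary contributions. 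Given this, Proposition \ref{prop: sum of K} together with Theorem \ref{thm: magnetisation} gives $\sum_w K_{E^c,t,h}(0,w)\le 2^{2d+1}h^{-1}M(\beta_c,h)\lesssim h^{-2/3}$. The integral term is then at most $C(\beta-\beta')h^{-2/3}\le Cc_0$.

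Choosing $c_0$ small makes this at most $\tfrac18$, while $\mathbb E[\varphi_{\beta'}(\mathcal S_n)]\le\tfrac14$. Taking $\delta$ small makes $\beta/\beta'$ close to $1$, so the total is at most $\tfrac12$. The non-differentiability of $\sup_E$ is avoided because the argument is carried out for each fixed $E$ before taking the supremum, and finite-volume approximation justifies the differentiation.
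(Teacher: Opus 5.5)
\textbf{There is a genuine gap, and it sits exactly at the step you call the main obstacle.} Your outline otherwise follows the paper's architecture: the thermal case via \eqref{eq:BOUND PSI BY PHI}, and the magnetic case via a coupling interpolation over a window $\asymp h^{2/3}$, Proposition~\ref{prop: derivative K}, and Proposition~\ref{prop: sum of K} with Theorem~\ref{thm: magnetisation}.

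Because you move the coupling on every edge, the insertion vertex $w$ produced by the derivative ranges over all of $\mathcal S_n\subset\Lambda_{n-1}$. You therefore need $\sup_{w\in\Lambda_n}\mathbf E^{\emptyset}_{\beta_c}[\mathds 1_{0,w\in\mathcal S_n}\varphi^{w}_{\beta_c}(\mathcal S_n)]\lesssim 1$, and Corollary~\ref{coro:bound translate random s_n} does not supply this. It is stated only for $w\in\Lambda_{n/2}$, and not by accident. Its proof treats the region $x_1>\lfloor 3n/4\rfloor$ with the gradient estimate \eqref{eq: gradient estimate}. That estimate needs every isometric image $u'$ of $w$ to be at distance of order $n$ from $\mathbb H_n$, and it breaks down for $w$ near $\partial\Lambda_n$, as the remark after the corollary says.

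Smallness of $K$ on the outer shell does not rescue this. The only a priori bound there is $K_{E^c,t,h}(0,w)\le\langle\sigma_0\sigma_w\rangle_{\beta_c}\lesssim n^{2-d}$. With $\beta-\beta'\lesssim n^{-2}$, the shell $\Lambda_n\setminus\Lambda_{n/2}$ then contributes a constant times the shell-average of $\mathbf E^{\emptyset}_{\beta_c}[\varphi^{w}_{\beta_c}(\mathcal S_n)]$. That again requires control near the boundary, which you explicitly leave open.

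\textbf{The missing idea is to localise the interpolation, as in \eqref{eq: def J}.} Take $J_e(t)=\beta'+t(\beta-\beta')$ only for $e\in E(\Lambda_{n/2})$, and $J_e=\beta$ elsewhere. Then Proposition~\ref{prop: derivative K} only produces $e_0\in E(\Lambda_{n/2})$, so the insertion point lies in $\Lambda_{n/2}$ and Corollary~\ref{coro:bound translate random s_n} applies; this is Lemma~\ref{lem: derivative}.

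The price is that the starting point is the mixed coupling $J(0)$ rather than $\beta'$. Since $J(0)\ge\beta'$, a bound on $\varphi_{\beta'}(\mathcal S_n)$ does not control $\varphi_{J(0)}(\mathcal S_n)$; you need \eqref{eq:prop 2}. That bound is proved by splitting the boundary of $\mathcal S_n$:
\begin{itemize}
\item Boundary points in $\Lambda_{\varepsilon n}$ contribute $O(\varepsilon^2)$, by \eqref{eq:switching where we move 0} and \eqref{eq:IRB}.
\item For boundary points outside $\Lambda_{\varepsilon n}$, iterate the Simon--Lieb inequality about $\varepsilon n/L(\beta')$ times with the deterministic sharp-length set at $\beta'$ from Theorem~\ref{thm:sharplength}. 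This is legitimate because $J(0)=\beta'$ on $\Lambda_{\varepsilon n}\subset\Lambda_{n/2}$. Close with Corollary~\ref{coro:bound translate random s_n} at the last point $v_k\in\Lambda_{n/2}$.
\end{itemize}

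Proposition~\ref{prop:properties of the random set}, which you already invoke, is formulated precisely for this localised set-up: \eqref{eq:prop 2} concerns $\varphi_{J(0)}$, and \eqref{eq:prop 3} concerns only $u\in\Lambda_{n/2}$.

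With this change the rest of your plan goes through: $\beta-\beta'\lesssim c_0h^{2/3}$, $n\asymp\ell(\beta,h)$, and $\sum_w K\lesssim h^{-1}M(\beta_c,h)\lesssim h^{-2/3}$. You should also condition on $\{0\in\mathcal S_n\}$ using \eqref{eq:prop 1}, which you need anyway so that the set contains $0$ almost surely.
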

 
The upper bound in Theorem \ref{thm:main} follows readily from Theorem \ref{thm:finding the good random S}.
 \begin{proof}[Proof of the upper bound in Theorem~\textup{\ref{thm:main}}] Let $C_1,\delta$ be given by Theorem \ref{thm:finding the good random S}. Let $(\beta,h)\in \mathcal Q_\delta$. By Theorem \ref{thm:finding the good random S}, there exist $n\leq C_1\ell(\beta,h)$ and a random set $\mathcal S_n\subset \Lambda_n$ of law $\mathbb P$ which almost surely contains $0$, such that
 \begin{equation}
 	\sup_{E\subset E(\mathbb Z^d)}\mathbb E[\Psi_{E^c,\beta,h}(\mathcal S_n)]\leq \frac{1}{2}.
 \end{equation}
Therefore, by Propositions \ref{prop:trunc vs K vs untrunc} and \ref{prop:sufficient condition to exp decay random}, there exist $c_2,C_2>0$ such that, for every $x\in \mathbb Z^d$,
 \begin{align}\label{eq:proof upper bound 1}
 \begin{aligned}
	\langle \sigma_0;\sigma_x\rangle_{\beta,h}&\leq K_{\beta,h}(0,x)\leq \frac{C_2}{1\vee |x|^{d-2}}\exp\left(-c_2\frac{|x|}{n}\right)\leq \frac{C_2}{1\vee |x|^{d-2}}\exp\left(-\frac{c_2}{C_1}\frac{|x|}{\ell(\beta,h)}\right).
	\end{aligned}
\end{align}
This concludes the proof.
\end{proof}

 A similar observation can be made in dimension $d=4$ to derive the upper bound in Theorem \ref{thm:main bis}.  
 \begin{Thm}\label{thm:finding the good random S d=4} Let $d=4$ and $\varepsilon>0$. There exist $C,\delta >0$ such that the following holds for every $(\beta,h)\in \mathcal Q_\delta$. There exist $n\geq 1$ and a random set $\mathcal S_n\subset \Lambda_n$ of law $\mathbb P$ that almost surely contains $0$ such that the following holds:
\begin{enumerate}
	\item[$(i)$] $n\leq C\ell(\beta,h)(\log \ell(\beta,h))^{2+\varepsilon}$;
	\item[$(ii)$] $\sup_{E\subset E(\mathbb Z^d)}\mathbb E[\Psi_{E^c,\beta,h}(\mathcal S_n)]\leq \tfrac{1}{2}$, where $\mathbb E$ denotes the expectation with respect to $\mathbb P$.
\end{enumerate}
\end{Thm}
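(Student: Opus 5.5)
We follow the extrapolation scheme of the introduction, adapted to the logarithmic corrections in $d=4$. Fix $(\beta,h)\in\mathcal Q_\delta$ and split into two cases.

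\emph{Thermal case} $\ell(\beta,h)=\ell(\beta,0)$. We bypass the magnetic field entirely. By \eqref{eq:BOUND PSI BY PHI}, $\Psi_{E^c,\beta,h}(S)\leq\varphi_\beta(S)$ for every $E$. Theorem~\ref{thm:sharplength} then provides a deterministic $S\ni 0$ inside $\Lambda_{L(\beta)}$ with $L(\beta)\lesssim\ell(\beta,0)\log\ell(\beta,0)$ and $\varphi_\beta(S)<\tfrac14$. We take $\mathcal S_n:=S$ almost surely, with $n=L(\beta)$; this already satisfies $(i)$ with room to spare.

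\emph{Magnetic case} $\ell(\beta,h)=h^{-1/3}\le(\beta_c-\beta)^{-1/2}$. Set $\beta(h):=\beta_c-c_0\,h^{2/3}(\log h^{-1})^{-a}$, with $c_0$ small and $a\geq 0$ to be tuned, and interpolate $\boldsymbol\beta(t)=\beta(h)+t(\beta-\beta(h))$ when $\beta>\beta(h)$. If $\beta\le\beta(h)$, Griffiths-monotonicity of $\varphi$ reduces us to the thermal argument at $\beta(h)$. We take $\mathcal S_n$ to be the random set of \cite{DuminilPanis2024newLB} built from a folded sourceless current, whose good properties are recorded in Proposition~\ref{prop:properties of the random set} and Corollary~\ref{coro:bound translate random s_n}. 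We need three inputs, each now carrying logarithmic factors in $d=4$. First, $\mathbb E[\varphi_{\beta(h)}(\mathcal S_n)]\le\tfrac14$ for $n\asymp\ell(\beta(h),0)\log\ell(\beta(h),0)$. Second, an averaged bound $\mathbb E[\max_u\varphi_{\beta_c}(\mathcal S_n-u)]\lesssim(\log n)^{b}$ (or a weighted version). Third, Proposition~\ref{prop: sum of K} combined with Theorem~\ref{thm: magnetisation}, which gives $\sum_u K\lesssim h^{-1}M(\beta_c,h)\lesssim h^{-2/3}\log h^{-1}$.

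Writing $\Psi$ at $\beta$ as $\Psi$ at $\beta(h)$ plus the integral of the $\beta$-derivative, Proposition~\ref{prop: derivative K} (summed over $e_0$, with $\overline\gamma$ localised near the steps of $\gamma$) bounds the error by
\[
C(\beta_c-\beta(h))\,h^{-2/3}(\log h^{-1})^{1+b}= C c_0(\log h^{-1})^{1+b-a}.
\]
Choosing $a=1+b$ makes this at most $\tfrac14$. Then $\ell(\beta(h),0)\asymp h^{-1/3}(\log h^{-1})^{a/2}$, and with the sharp-length log we get
\[
n\lesssim\ell(\beta,h)(\log\ell)^{1+a/2}.
\]
With $b$ close to $1$ (an extra $\varepsilon$ from summing dyadic scales with a $(\log)^{-1-\varepsilon}$ weight to make sums converge, cf.\ \eqref{eq: where eps becomes crucial}), this yields the exponent $2+\varepsilon$. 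The technical non-differentiability of $\sup_E$ is handled by differentiating at fixed $E$ and taking the sup afterwards, since the bound is uniform in $E$.

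\emph{Main obstacle.} The hard step is the averaged translate estimate for $\varphi_{\beta_c}(\mathcal S_n-u)$ at $d=4$, where the infrared bound only yields logarithmically divergent bubble-type sums. We would try to control $\mathbb E\sum_{u\in\mathcal S_n}K(0,u)\varphi_{\beta_c}(\mathcal S_n-u)$ directly, using the current representation of $\mathcal S_n$ and a dyadic decomposition of scales, accepting the resulting $(\log)^{1+\varepsilon}$ loss.
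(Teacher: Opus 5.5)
Your architecture matches the paper's. In the thermal regime you take a Dirac mass on a sharp-length set. In the magnetic regime you interpolate in $\beta$ at fixed $E$, use Proposition~\ref{prop: derivative K} and the folded-current set $\mathcal S_n$, and bound $\sum_u K\lesssim h^{-1}M(\beta_c,h)$. The bookkeeping $a=1+b$ with $b=1$ is also right.

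\textbf{The gap is in the derivative term.} You interpolate $\boldsymbol\beta(t)$ on \emph{all} edges, so Proposition~\ref{prop: derivative K} produces a contribution from every edge $e_0$ explored by a path $\gamma\subset\mathcal S_n$. Split $\gamma$ at the endpoint $u$ of $e_0$ and bound the first piece by the full-volume (hence deterministic) $K_{E^c}(0,u)$. You are then left needing $\mathbb E[\varphi^u_{\beta_c}(\mathcal S_n)]\lesssim(\log n)^{\mathds{1}_{d=4}}$ for every $u\in\Lambda_{n-1}$, including $u$ at distance $O(1)$ from $\partial\Lambda_n$.

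Corollary~\ref{coro:bound translate random s_n} gives this only for $u\in\Lambda_{n/2}$. Its proof relies on the gradient estimate \eqref{eq: gradient estimate}, which needs $u$ at distance of order $n$ from the reflection hyperplanes. The paper points out that the switching bound \eqref{eq:proof of prop 3} fails when $u$ gets too close to $\mathbb H_n$. So your ``main obstacle'' is misdiagnosed. The $\log n$ for interior $u$ is already available; the real obstruction is the boundary. It is present in every $d\geq 4$ (it would also break Theorem~\ref{thm:finding the good random S}), and a dyadic decomposition of scales does not address it.

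\textbf{The missing idea is to localise the interpolation.} The paper uses the inhomogeneous coupling $J(t)$ of \eqref{eq: def J}, interpolating from $\beta'$ to $\beta$ only on $E(\Lambda_{n/2})$. Then only $u\in\Lambda_{n/2}$ enter Lemma~\ref{lem: derivative}. The price is that the starting term becomes $\mathbb E[\varphi_{J(0)}(\mathcal S_n)]$ for a non-homogeneous coupling, which the sharp length at $\beta'$ does not control on its own. The paper handles it in \eqref{eq:prop 2}:
\begin{itemize}
\item for $x\in\Lambda_{\varepsilon n}$, the contribution is bounded by the infrared bound;
\item for $x\notin\Lambda_{\varepsilon n}$, the Simon--Lieb inequality is iterated with a $\beta'$-good set inside $\Lambda_{\varepsilon n}\subset\Lambda_{n/2}$, closing with Corollary~\ref{coro:bound translate random s_n} at $v_k\in\Lambda_{n/2}$.
\end{itemize}

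\textbf{Two smaller corrections.}
\begin{itemize}
\item The exponential error term in \eqref{eq:prop 2} carries a factor $(\log n)^{\mathds{1}_{d=4}}$, so $n\asymp\ell(\beta(h),0)\log\ell(\beta(h),0)$ is not enough for your first input. That factor, together with the $\log|\log h|$ factor the paper inserts in $\beta-\beta'$, is the actual origin of the $\varepsilon$ in \eqref{eq: where eps becomes crucial}, not a dyadic summation.
\item The random set must be conditioned on $\{0\in\mathcal S_n\}$ so that it contains $0$ almost surely. By \eqref{eq:prop 1} this costs only a factor $2$.
\end{itemize}
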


\begin{proof}[Proof of the upper bound in Theorem~\textup{\ref{thm:main bis}}] The proof is identical to that of the upper bound in Theorem \ref{thm:main}, using Theorem \ref{thm:finding the good random S d=4} in place of Theorem \ref{thm:finding the good random S}. This replaces $\ell(\beta,h)$ by $\ell(\beta,h)(\log \ell(\beta,h))^{2+\varepsilon}$ in \eqref{eq:proof upper bound 1}.
\end{proof}

We now turn to the proofs of 
Theorems~\ref{thm:finding the good random S} 
and~\ref{thm:finding the good random S d=4}. As explained in the 
introduction, the approach builds on the information available at $h=0$.

\section{Extrapolation principle: comparison to the $h=0$ case}\label{sec:extrapolation}

In this section, we prove Theorems~\ref{thm:finding the good random S} 
and~\ref{thm:finding the good random S d=4}. The thermal regime $\ell(\beta,h)=(\beta_c-\beta)^{-1/2}$ follows from 
\cite{DuminilPanis2024newLB}, as detailed below. To treat the magnetic regime $\ell(\beta,h)=h^{-1/3}$ we implement the strategy described in the introduction. We first use Proposition \ref{prop: derivative K} to give an explicit bound on the $\beta$-derivative of $\mathbb E[\Psi_{E^c,\beta,h}(\mathcal S_n)]$, see Lemma \ref{lem: derivative}. We then define the \emph{good} random set $\mathcal S_n$---first introduced in \cite{DuminilPanis2024newLB}---and present the relevant properties it satisfies, see Proposition \ref{prop:properties of the random set}. The proof of this result is postponed to the next section. Finally, we complete the proofs of Theorems \ref{thm:finding the good random S} and \ref{thm:finding the good random S d=4}.

\subsection{Derivative computation}

Let $(\beta,h)\in \mathcal Q$, $\beta'\in [0,\beta]$, and fix $E\subset E(\mathbb Z^d)$. Let $n\geq 1$ be even, and fix a random set $\mathcal S_n\subset \Lambda_n$ of law $\mathbb P$, which almost surely contains $0$. In the next result, we rely on Proposition \ref{prop: derivative K} to compare $\mathbb E[\Psi_{E^c,\beta,h}(\mathcal S_n)]$ to the analogous quantity at parameters $(\beta',0)$. For technical reasons, we modify $\beta$ to $\beta'$ only in the box $\Lambda_{n/2}$.

Before stating the result, we need some notation. For $t\in [0,1]$, we define $J(t)\in (\mathbb R^+)^{E(\mathbb Z^d)}$ as follows: 
\begin{equation}\label{eq: def J}
	J_e(t):=\left\{
			\begin{array}{ll}
				\beta'+t(\beta-\beta') & \mbox{if } e\in E(\Lambda_{n/2}), \\
				\beta & \mbox{otherwise}.
			\end{array}
			\right.
\end{equation}
If $S\subset \mathbb Z^d$ and $u\in \mathbb Z^d$, we let
\begin{equation}
	\varphi_{\beta_c}^u(S)=\sum_{\substack{x\in S\\y\notin S\\x \sim y}}\langle \sigma_u\sigma_x\rangle_{S,\beta_c}\, \beta_c.
\end{equation}
Observe that $\varphi^u_{\beta_c}(S)=0$ is $u \notin S$.

\begin{Lem}\label{lem: derivative} Let $d\geq 4$. There exists $C>0$ such that the following holds. For every $\tfrac{\beta_c}{2}\leq \beta'\leq \beta\leq \beta_c$ and every $h\geq 0$,
\begin{multline}\label{eq: derivative}
	\mathbb E[\Psi_{E^c,\beta,h}(\mathcal S_n)]\leq 2\mathbb E[\varphi_{J(0)}(\mathcal S_n)]\\+C(\beta-\beta') \sup_{0\leq t\leq 1}\sum_{u\in \Lambda_{n/2}}K_{E^c,J(t),h}(0,u)\mathbb E[\varphi_{\beta_c}^u(\mathcal S_n)].
\end{multline}
	
\end{Lem}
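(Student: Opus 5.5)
We interpolate along $t\mapsto J(t)$ and apply the fundamental theorem of calculus. Observe first that $J(1)\equiv\beta$ everywhere, so
\begin{equation*}
\Psi_{E^c,\beta,h}(\mathcal S_n)=\sum_{u\in\mathcal S_n,\,v\notin\mathcal S_n,\,u\sim v}K^{\mathcal S_n}_{E^c,J(1),h}(0,u)\,\beta .
\end{equation*}
The natural interpolating quantity is therefore
\begin{equation*}
f(t):=\sum_{u\in \mathcal S_n,\,v\notin \mathcal S_n,\,u\sim v}K^{\mathcal S_n}_{E^c,J(t),h}(0,u).
\end{equation*}
Since $\mathcal S_n\subset\Lambda_n$ and $\beta\le 2\beta'\le 2J_{uv}(0)$, we have $\beta f(1)\le 2\big(f(0)\cdot\text{edge weights}\big)$ up to the derivative term. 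More precisely, at $t=0$ we apply \eqref{eq: K^S vs K_S}, Proposition~\ref{prop:trunc vs K vs untrunc} and Griffiths' inequality to get $K^{\mathcal S_n}_{E^c,J(0),h}(0,u)\le\langle\sigma_0\sigma_u\rangle_{\mathcal S_n,J(0)}$. Multiplying by $\beta\le 2J_{uv}(0)$ (the edge $uv$ has weight $\beta'$ or $\beta$, both at least $\beta/2$) gives $\beta f(0)\le 2\varphi_{J(0)}(\mathcal S_n)$. Here $\varphi_J$ is defined with weights $J_{uv}$; this is exactly where the factor $2$ comes from.

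Next we control $f(1)-f(0)=\int_0^1 f'(t)\,dt$. Everything is a finite-volume limit, so we work in finite $G$ and pass to the limit; differentiability in finite volume is clear since $\rho$ is an explicit analytic expression. By the chain rule,
\begin{equation*}
\frac{d}{dt}K^{S}_{E^c,J(t),h}(0,u)=(\beta-\beta')\sum_{e_0\in E(\Lambda_{n/2})}\partial_{J_{e_0}}K^S_{E^c,J(t),h}(0,u).
\end{equation*}
All $J_{e_0}(t)\ge\beta'\ge\beta_c/2$, so Proposition~\ref{prop: derivative K} applies and bounds this by
\begin{equation*}
C(\beta-\beta')\sum_{\gamma:0\to u}|\overline\gamma\cap E(\Lambda_{n/2})|\,\rho^S_{E^c,J(t),h}(\gamma).
\end{equation*}
Each step explores at most $2d+1$ edges, all incident to the step's starting vertex. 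Up to a constant, $|\overline\gamma\cap E(\Lambda_{n/2})|$ is therefore bounded by the number of vertices $w\in\Lambda_{n/2}$ (or neighbours of it) visited by $\gamma$.

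The key step is to decompose $\gamma=\gamma_1\circ\gamma_2$ at the first visit to such a vertex $w$, using \eqref{eq:propbackb2}:
\begin{equation*}
\rho^S(\gamma)=\rho^S_{E^c,J(t),h}(\gamma_1)\,\rho^S_{(E\cup\overline{\gamma_1})^c,J(t),h}(\gamma_2).
\end{equation*}
Summing over $\gamma_1$ gives at most $K_{E^c,J(t),h}(0,w)$, where we drop the restriction to $S$ (keeping $w\in S$). Summing over $\gamma_2:w\to u$ inside $S$ gives at most $K^S_{(E\cup\overline{\gamma_1})^c,J(t),h}(w,u)$. By \eqref{eq: K^S vs K_S}, Proposition~\ref{prop:trunc vs K vs untrunc} and Griffiths' inequality (using $J(t)\le\beta_c$), this is at most $\langle\sigma_w\sigma_u\rangle_{S,\beta_c}$, uniformly in $\gamma_1$. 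Summing over boundary pairs $(u,v)$ and multiplying by $\beta\le\beta_c$ produces $\varphi^w_{\beta_c}(\mathcal S_n)$, which vanishes unless $w\in\mathcal S_n$.

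Taking expectations, and noting that $K_{E^c,J(t),h}(0,w)$ does not depend on $\mathcal S_n$, gives
\begin{equation*}
\beta\,\mathbb E[f(1)-f(0)]\le C(\beta-\beta')\sup_t\sum_{w\in\Lambda_{n/2}}K_{E^c,J(t),h}(0,w)\,\mathbb E[\varphi^w_{\beta_c}(\mathcal S_n)].
\end{equation*}
Combined with the bound at $t=0$, this is \eqref{eq: derivative}.

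I expect the main obstacle to be the bookkeeping at the decomposition step. There are three points to check. First, $\gamma_1$ may wander outside $\mathcal S_n$ only in the relaxed bound, which is harmless since we are upper bounding. Second, the neighbours $w$ of $\Lambda_{n/2}$ versus $\Lambda_{n/2}$ itself require enlarging the sum by a constant factor, or else noticing that explored edges in $E(\Lambda_{n/2})$ start at vertices of $\Lambda_{n/2}$, which is the cleaner route. Third, the $\varphi^w$ term uses the volume $\mathcal S_n$ with $u$-sum over its boundary, which matches the restriction of $\gamma_2$ to $S$.
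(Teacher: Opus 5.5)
Your proposal is correct and follows the paper's proof essentially step for step. The common steps are: the fundamental theorem of calculus along $t\mapsto J(t)$ with the prefactor $\beta$ frozen; the bound $\beta\le 2J_{uv}(0)$ combined with the reasoning behind \eqref{eq:BOUND PSI BY PHI} at $t=0$; Proposition~\ref{prop: derivative K} for the $t$-derivative; and the decomposition of $\gamma$ at its first visit to a vertex $w\in\Lambda_{n/2}$, where every edge of $\overline{\gamma}\cap E(\Lambda_{n/2})$ has an endpoint on $\gamma$, which gives the factor $2d$. As in the paper, you drop the $\mathcal S_n$-restriction on $\gamma_1$, so that $K_{E^c,J(t),h}(0,w)$ comes out of the expectation, and you bound the $\gamma_2$-sum by $\langle\sigma_w\sigma_u\rangle_{\mathcal S_n,\beta_c}$.
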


\begin{proof} We abuse notations slightly and define
\begin{equation}
\Psi_{E^c,J(t),h}(\mathcal S_n)=\sum_{\substack{x\in \mathcal S_n\\y\notin \mathcal S_n\\x\sim y}}K_{E^c,J(t),h}^{\mathcal S_n}(0,x)\cdot \beta,
\end{equation}
i.e.~$J(t)$ only affects the two-point function $K$, not the interaction multiplying the sum above.
By the fundamental theorem of calculus, 
\begin{equation}\label{eq:pf der psi 1}
	\mathbb E[\Psi_{E^c,\beta,h}(\mathcal S_n)]=\mathbb E[\Psi_{E^c,J(0),h}(\mathcal S_n)]+\int_0^1\mathbb E\Big[\frac{\partial}{\partial t}\Psi_{E^c,J(t),h}(\mathcal S_n)\Big]\mathrm{d}t,
\end{equation}
where we used that $\frac{\partial}{\partial t}\mathbb E[\Psi_{E^c,J(t),h}(\mathcal S_n)]=\mathbb E[\frac{\partial}{\partial t}\Psi_{E^c,J(t),h}(\mathcal S_n)]$.
By \eqref{eq:BOUND PSI BY PHI}, one has
\begin{equation}\label{eq:pf der psi 2}
	\mathbb E[\Psi_{E^c,J(0),h}(\mathcal S_n)]\leq (\beta/\beta')\mathbb E[\varphi_{J(0)}(\mathcal S_n)]\leq 2\mathbb E[\varphi_{J(0)}(\mathcal S_n)].
\end{equation}
Fix $S\subset \Lambda_n$ which contains $0$. By Proposition \ref{prop: derivative K} and our assumption on $\beta'$, there exists $C_1=C_1(d)>0$ such that
\begin{equation}\label{eq:pf der psi 3}
	\frac{\partial}{\partial t}\Psi_{E^c,J(t),h}(S)\leq C_1(\beta-\beta')\sum_{\substack{x\in S\\y\notin  S\\ x\sim y}}\sum_{e\in E(\Lambda_{n/2})}\sum_{\gamma:0\rightarrow x}\mathds{1}\{e\in \overline{\gamma}\}\rho_{E^c,J(t),h}^{S}(\gamma).
\end{equation}
We now make the following observation: if $e=uv\in \overline{\gamma}$, then either $u\in \gamma$ or $v\in \gamma$. If the former case holds, we can write $\gamma=\gamma_1\circ\gamma_2$ where $\gamma_1:0\rightarrow u$ visits $u$ exactly once, and $\gamma_2: u\rightarrow x$. Combining this observation with Proposition \ref{prop:backbone} gives
\begin{align}\label{eq:pf der psi 4}
\begin{aligned}
	\sum_{e\in E(\Lambda_{n/2})}\sum_{\gamma:0\rightarrow x}\mathds{1}\{e\in \overline{\gamma}\}&\rho_{E^c,J(t),h}^{S}(\gamma)
	\\&\leq 2d \sum_{u\in \Lambda_{n/2}}\sum_{\gamma_1:0\rightarrow u}\rho_{E^c,J(t),h}^S(\gamma_1)K_{(E\cup\overline{\gamma}_1)^c,J(t),h}^S(u,x)
\\&\leq 2d\sum_{u\in \Lambda_{n/2}}K_{E^c,J(t),h}(0,u)\langle \sigma_u\sigma_x\rangle_{S,\beta_c},
\end{aligned}
\end{align}
where we used \eqref{eq:propbackb1} and Griffiths' inequality to argue that \begin{equation}
K_{(E\cup\overline{\gamma}_1)^c,J(t),h}^S(u,x)\leq\langle \sigma_{u}\sigma_x\rangle_{(S,E(S)\setminus(E\cup\overline{\gamma}_1)),J(t)} \leq \langle \sigma_u\sigma_x\rangle_{S,\beta_c}.
\end{equation}
The combination of \eqref{eq:pf der psi 3} and \eqref{eq:pf der psi 4} yields
\begin{equation}
	\mathbb E\Big[\frac{\partial}{\partial t}\Psi_{E^c,J(t),h}(\mathcal S_n)\Big]\leq 2dC_1(\beta-\beta') \sup_{s\in [0,1]} \sum_{u\in \Lambda_{n/2}}K_{E^c,J(s),h}(0,u)\mathbb E[\varphi_{\beta_c}^u(\mathcal S_n)],
\end{equation}
which, when plugged into \eqref{eq:pf der psi 1}, gives the desired result.
\end{proof}

In view of Lemma~\ref{lem: derivative}, the random set $\mathcal{S}_n$ 
must be chosen so that the right-hand side of~\eqref{eq: derivative} can 
be made smaller than $\tfrac{1}{2}$ for an appropriate choice of $\beta'$ and $n\lesssim\ell(\beta,h)$.

\subsection{Definition of the random set $\mathcal S_n$}\label{sec:definition of S_n}

We now describe how to construct the random set $\mathcal S_n$ mentioned at the beginning of the section. We first import notation from \cite{AizenmanDuminilTassionWarzelEmergentPlanarity2019,DuminilPanis2024newLB}. For each $n\geq 1$, we let $\mathbb H_n:=\{x\in \mathbb Z^d: x_1=n\}$, and denote by $\mathcal R_n$ the reflection with respect to $\mathbb H_n$. 

We consider \emph{folded} single random currents on $\mathbb Z^d$ (see \cite{aizenman2025geometric} for a review of this ``trick''), and partition the edge-set $E(\mathbb Z^d)$ into three disjoint sets:
\begin{align*}
    E^{-}(\mathbb H_n)&:=\Big\{ uv\in E(\mathbb Z^d) :\textup{at least one endpoint is strictly on the left of $\mathbb H_n$}\Big\},\\
    E^{+}(\mathbb H_n)&:=\Big\{ uv\in E(\mathbb Z^d) :\textup{at least one endpoint is strictly on the right of $\mathbb H_n$}\Big\},\\
    E^{0}(\mathbb H_n)&:=E(\mathbb Z^d)\setminus\left(E^{-}(\mathbb H_n)\cup E^{+}(\mathbb H_n)\right).
\end{align*}
Decompose a current $\n$ on $\mathbb Z^d$ into its restrictions $\n^{-}$, $\n^{+}$ and $\n^0$ to the above three subsets of $E(\mathbb Z^d)$. It is convenient to consider the \emph{multigraph} representation of these objects, which we denote by $\mathcal N^-$, $\mathcal N^+$, and $\mathcal N^0$, respectively. Consider the multigraph $\mathcal{M}_n$
 obtained by taking the union of the multigraph $\mathcal{N}^{-}$ and the reflection $\mathcal{R}_n(\mathcal{N}^+)$ of the multigraph $\mathcal{N}^+$. The above definitions are all with respect to the direction $\mathbf e_1=(1,0,\ldots,0)\in \mathbb Z^d$. In order to extend them to other directions, we write $\mathcal{M}_n(+\mathbf{e}_1):=\mathcal{M}_n$, $\mathcal R_n(+\mathbf{e}_1):=\mathcal R_n$, $\mathbb H_n(+\mathbf{e}_1):=\mathbb H_n$, and define
\begin{equation}
   \mathcal{S}_n(+\mathbf{e}_1):=\Big\lbrace x\in \Lambda_n: \: x \overset{\mathcal M_n(+\mathbf e_1)}{\centernot\longleftrightarrow} \mathbb H_n(+\mathbf{e}_1)\Big\rbrace.\end{equation}
\noindent Similarly, define $\mathbb H_n(\pm \mathbf{e}_i)$, $\mathcal R_n(\pm \mathbf{e}_i)$, $\mathcal{M}_n(\pm \mathbf{e}_i)$, and $\mathcal{S}_n(\pm\mathbf{e}_i)$ for $1\leq i \leq d$ in the other $2d-1$ directions. Let
\begin{equation}\label{eq:def S_n}
    \mathcal{S}_n:=\bigcap_{1\leq i \leq d}\left(\mathcal{S}_n(+\mathbf{e}_i)\cap \mathcal{S}_n(-\mathbf{e}_i)\right).
\end{equation}
Observe that $\mathcal S_n\subset \Lambda_{n-1}$.
The random set $\mathcal S_n$ is obtained by sampling $\n$ according to $\mathbf P_{\beta_c}^\emptyset$. The analysis of $\mathcal{S}_n$ relies on the fact that $\mathcal{M}_n$ 
can be viewed as the superposition of two independent currents, enabling 
the use of switching techniques; see Section~\ref{sec: properties of 
random set} for details. In particular, $\mathcal{S}_n$ satisfies properties needed to control the right-hand side of~\eqref{eq: derivative}. Recall the definition of $J(t)$ from \eqref{eq: def J}. 
\begin{Prop}[Properties of the random set $\mathcal S_n$]\label{prop:properties of the random set} Let $d\geq 4$. There exist $c,C>0$ such that the following holds.  For every $(\beta,h)\in \mathcal Q$, every $\beta'\in [0,\beta)$, and every $n$ large enough,
\begin{equation}\label{eq:prop 1}
	\mathbf P_{\beta_c}^\emptyset[0\in \mathcal S_n]\geq \frac{1}{2},
\end{equation}
\begin{equation}\label{eq:prop 2}
	\mathbf E_{\beta_c}^{\emptyset}[\mathds{1}_{0\in \mathcal S_n}\varphi_{J(0)}(\mathcal S_n)]\leq \frac{1}{16}+ C\exp\left(-c\frac{(\beta_c-\beta')^{1/2}}{|\log (\beta_c-\beta')|^{\mathds{1}_{d=4}}} n\right)\cdot (\log n)^{\mathds{1}_{d=4}},
\end{equation}
\begin{equation}\label{eq:prop 3}
	\max_{u\in \Lambda_{n/2}}\mathbf E_{\beta_c}^\emptyset[\mathds{1}_{0,u\in \mathcal S_n}\varphi_{\beta_c}^u(\mathcal S_n)]\leq C(\log n)^{\mathds{1}_{d=4}}.
\end{equation}
\end{Prop}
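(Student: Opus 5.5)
The plan is to prove the three estimates separately. Each one uses the folding trick together with the switching lemma, exactly as in \cite{AizenmanDuminilTassionWarzelEmergentPlanarity2019,DuminilPanis2024newLB}. The first step is structural. Fix a direction, say $+\mathbf e_1$, and sample $\n\sim\mathbf P^\emptyset_{\beta_c}$. Since $\mathbf P^\emptyset_{\beta_c}$ is the infinite-volume sourceless current measure, the multigraph $\mathcal M_n(+\mathbf e_1)=\mathcal N^-\cup\mathcal R_n(\mathcal N^+)$ has, after conditioning on $\n^0$ (which lives on edges inside $\mathbb H_n$), the law of the sum $\n_1+\n_2$ of two independent currents on the half-space $\{x_1\le n\}$. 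Their sources can only lie on $\mathbb H_n$. Connection events to $\mathbb H_n$ in $\mathcal M_n$ can therefore be treated as connection events in a double current, and the switching lemma and \eqref{eq:switching for truncated}-type identities apply.

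\emph{Proof of \eqref{eq:prop 1}.} By a union bound over the $2d$ directions, it suffices to show $\mathbf P^\emptyset_{\beta_c}[0\connect{\mathcal M_n}\mathbb H_n]\le 1/(4d)$. A further union bound over $y\in\mathbb H_n$, followed by the switching lemma, bounds the probability that $0$ is connected to $y$ in $\n_1+\n_2$ by a product of two half-space two-point functions at $\beta_c$. By Griffiths' inequality these are at most $\langle\sigma_0\sigma_y\rangle_{\beta_c}^2$. The infrared bound \eqref{eq:IRB} then gives
\[
\sum_{y\in\mathbb H_n}\langle\sigma_0\sigma_y\rangle_{\beta_c}^2\lesssim n^{d-1}\,n^{2(2-d)}=n^{3-d}.
\]
This is $o(1)$ for $d\ge4$, so \eqref{eq:prop 1} holds once $n$ is large.

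\emph{Proof of \eqref{eq:prop 3}.} Condition on $\mathcal S_n=S$. The boundary edges $xy$ of $S$ (with $x\in S$, $y\notin S$) are exactly those where $y$ is connected to some $\mathbb H_n(\pm\mathbf e_i)$ in the corresponding $\mathcal M_n$. I will reproduce the key computation of \cite{DuminilPanis2024newLB}, which converts $\sum_{x\in S,y\notin S}\langle\sigma_u\sigma_x\rangle_{S,\beta_c}\beta_c$, averaged over $\mathcal S_n$, into a sum of probabilities. These are probabilities that a current with sources $u$ and a reflected point is connected through the folded configuration. Concretely, gluing a current with sources $\{u,x\}$ in $S$, the step $xy$, and the part of $\mathcal M_n$ connecting $y$ to $\mathbb H_n$, then unfolding via $\mathcal R_n$, produces a configuration contributing to $\langle\sigma_u\sigma_{\mathcal R_n(u)}\rangle_{\beta_c}$. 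Accounting for the $\langle\sigma_u\sigma_{\mathcal R_n(u)}\rangle$ normalisation yields
\[
\mathbf E[\mathds 1_{0,u\in\mathcal S_n}\varphi^u_{\beta_c}(\mathcal S_n)]\lesssim \sum_{\pm,i}\langle\sigma_u\sigma_{\mathcal R_n(\pm\mathbf e_i)(u)}\rangle_{\beta_c}\,n^{d-2}.
\]
For $u\in\Lambda_{n/2}$ the distance from $u$ to $\mathcal R_n(u)$ is of order $n$, so \eqref{eq:IRB} makes the right-hand side $O(1)$. In $d=4$, the lower bound of Theorem \ref{thm:bounds 2pt d=4}, which carries a $\log$ loss, is what enters the normalisation, and this produces the factor $\log n$.

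\emph{Proof of \eqref{eq:prop 2}.} This is the main obstacle, because $\varphi_{J(0)}$ has the small coupling $\beta'$ only inside $\Lambda_{n/2}$, while $\mathcal S_n$ is built at $\beta_c$. I will split $\varphi_{J(0)}(\mathcal S_n)$ according to whether $x\in\Lambda_{n/4}$.

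For $x\notin\Lambda_{n/4}$, apply the Simon--Lieb inequality (Lemma \ref{lem:SL}) once with $S=\Lambda_{n/4}$. The first factor $\langle\sigma_0\sigma_w\rangle_{\Lambda_{n/4},\beta'}$, summed over $w\in\partial\Lambda_{n/4}$, is by Theorems \ref{thm:bounds 2pt d>4}--\ref{thm:bounds 2pt d=4} at most $C\exp\bigl(-c\,n/(\ell(\beta',0)\log\ell(\beta',0)^{\mathds 1_{d=4}})\bigr)$. The remaining factor, $\sum_{x,y}\langle\sigma_v\sigma_x\rangle_{\mathcal S_n,\beta_c}$ with $v$ near $\partial\Lambda_{n/4}$, is bounded in expectation by the argument for \eqref{eq:prop 3} by $C(\log n)^{\mathds 1_{d=4}}$; the required $v$ lie in $\Lambda_{n/2}$ up to constants.

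For $x\in\Lambda_{n/4}$, the neighbour $y\notin\mathcal S_n$ must be connected in some $\mathcal M_n$ to a hyperplane at distance $\ge 3n/4$. By the switching computation of the first part, this event has small probability. More precisely, the same unfolding gives a bound of the form $\sum_{x\in\Lambda_{n/4}}\langle\sigma_0\sigma_x\rangle_{\beta_c}\cdot n^{d-2}\langle\sigma_x\sigma_{\mathcal R_n(x)}\rangle_{\beta_c}\cdot(\text{extra gain})$. I expect to need the sharper form of the unfolding, in which $0\in\mathcal S_n$ forces disconnection, so that the whole contribution is bounded by the $\mathcal M_n$-connection probability from a small ball around $0$. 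I would first try to show that this piece is $\le 1/16$ for $n$ large, possibly after replacing $\Lambda_{n/4}$ by $\Lambda_{\varepsilon n}$ and paying a constant $\varepsilon^{-O(1)}$ in the exponential term. This is where I am least sure the bookkeeping closes.
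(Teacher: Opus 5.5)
\textbf{Gap 1: the proof of \eqref{eq:prop 3}.} This is also what you use for the far part of \eqref{eq:prop 2}. Your step of gluing a current from $u$ to $x$, the edge $xy$ and the $\mathcal M_n$-cluster of $y$, then ``unfolding'' and accounting for a normalisation to reach $\langle\sigma_u\sigma_{\mathcal R_n(u)}\rangle_{\beta_c}n^{d-2}$, is not an argument. You do not specify the gluing map. The switching lemma gives upper bounds on such weights, not an injection into currents with sources $u,\mathcal R_n(u)$. Nothing produces a factor $n^{d-2}$.

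The missing ingredient is the switching inequality of \cite{DuminilPanis2024newLB}. First reduce to the direction $+\mathbf e_1$ by symmetry, summing over isometric images $u'$ of $u$. Then
\begin{multline*}
\mathbf E_{\beta_c}^\emptyset\bigl[\mathds 1\{u',x\in\mathcal S_n^1\}\mathds 1\{y\connect{\mathcal M_n\:}\mathbb H_n\}\langle\sigma_{u'}\sigma_x\rangle_{\mathcal S_n^1,\beta_c}\bigr]\\
\le\bigl(\langle\sigma_{u'}\sigma_x\rangle_{\beta_c}-\langle\sigma_{u'}\sigma_{\mathcal R_n(x)}\rangle_{\beta_c}\bigr)\langle\sigma_y\sigma_{\mathcal R_n(y)}\rangle_{\beta_c}.
\end{multline*}
The difference is indispensable. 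Without it, the sum over $x\sim y$ grows linearly in $n$: already the layer next to $\mathbb H_n$, where $\langle\sigma_y\sigma_{\mathcal R_n(y)}\rangle=1$, gives $n^{d-1}$ terms of size $\approx n^{2-d}$. With it, split at $x_1=\lfloor 3n/4\rfloor$:
\begin{itemize}
\item the bulk part is $\lesssim n^{4-d}$ by \eqref{eq:IRB};
\item near $\mathbb H_n$, use the gradient estimate $\langle\sigma_{u'}\sigma_x\rangle_{\beta_c}-\langle\sigma_{u'}\sigma_{\mathcal R_n(x)}\rangle_{\beta_c}\lesssim\frac{|x-\mathcal R_n(x)|}{n}\langle\sigma_0\sigma_{\lfloor n/4\rfloor\mathbf e_1}\rangle_{\beta_c}$, which comes from reflection positivity and Messager--Miracle-Sol\'e. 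This is where $u\in\Lambda_{n/2}$ is really used. It reduces everything to $\sum_{k\le n}k\langle\sigma_0\sigma_{k\mathbf e_1}\rangle_{\beta_c}$.
\end{itemize}
So the $\log n$ in $d=4$ comes from this upper-bound sum, not from the lower bound of Theorem~\ref{thm:bounds 2pt d=4}.

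The same inequality, with the reflected term dropped and Griffiths' inequality, closes the near piece of \eqref{eq:prop 2} that you left open. It is at most
\[
\beta\sum_{x\in\Lambda_{\varepsilon n},\,y\sim x}\langle\sigma_0\sigma_x\rangle_{\beta_c}\langle\sigma_y\sigma_{\mathcal R_n(y)}\rangle_{\beta_c}\lesssim(\varepsilon n)^2n^{2-d}\le\varepsilon^2 .
\]
There is no factor $n^{d-2}$. You do need $\varepsilon$ small, since in $d=4$ this term is not $o(1)$.

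\textbf{Gap 2: the far piece of \eqref{eq:prop 2}.} A single Simon--Lieb step through $\partial\Lambda_{n/4}$ does not give $C\exp(-cn/\ell')$, where $\ell'=\ell(\beta',0)(\log\ell(\beta',0))^{\mathds 1_{d=4}}$. Summing the bounds of Theorems~\ref{thm:bounds 2pt d>4}--\ref{thm:bounds 2pt d=4} over the $\asymp n^{d-1}$ boundary vertices gives $Cn\,e^{-cn/\ell'}$. The extra factor $n$ cannot be absorbed when $\ell'\ll n\lesssim\ell'\log\ell'$. That is exactly the regime where \eqref{eq:prop 2} is applied, namely $n\asymp h^{-1/3}$ in the magnetic regime, so you would lose a logarithm in Theorem~\ref{thm:main}.

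The remedy is to iterate Simon--Lieb $k\asymp\varepsilon n/L(\beta')$ times with translates of a set $S\subset\Lambda_{L(\beta')}$ satisfying $\varphi_{\beta'}(S)\le\tfrac14$. This gives $4^{-k}$ with no polynomial prefactor. Bound the last factor by \eqref{eq:prop 3}, since its root $v_k$ lies in $\Lambda_{n/2}$. Then convert $L(\beta')$ via Theorem~\ref{thm:sharplength}.

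\textbf{A smaller point on \eqref{eq:prop 1}.} Given $\n^0$, the two halves of $\mathcal M_n$ are not independent sourceless currents: they carry coupled sources on $\mathbb H_n$. So the bound $\langle\sigma_0\sigma_y\rangle_{\beta_c}^2$ is not justified as stated. The folded switching gives directly $\mathbf P_{\beta_c}^\emptyset[0\notin\mathcal S_n]\le 2d\langle\sigma_0\sigma_{2n\mathbf e_1}\rangle_{\beta_c}$, which suffices.
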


We postpone the proof of this result to the next section and first conclude the proofs of Theorems \ref{thm:finding the good random S} and \ref{thm:finding the good random S d=4}. Before doing so, we state a consequence of Lemma \ref{lem: derivative} and Proposition \ref{prop:properties of the random set}.

\begin{Coro}\label{coro:bound on psi} Let $d\geq 4$. There exist $c,C,h_0>0$ such that the following holds. For every $\tfrac{\beta_c}{2}\leq \beta'< \beta\leq \beta_c$, every $h\in (0,h_0]$, and every $n$ large enough,
\begin{multline}
	\sup_{E\subset E(\mathbb Z^d)}\mathbf E_n[\Psi_{E^c,\beta,h}(\mathcal S_n)]\leq \frac{1}{4}+ C\exp\left(-c\frac{(\beta_c-\beta')^{1/2}}{|\log (\beta_c-\beta')|^{\mathds{1}_{d=4}}}n\right)\cdot (\log n)^{\mathds{1}_{d=4}}
	\\+C(\beta-\beta')\cdot h^{-2/3}|\log h|^{\mathds{1}_{d=4}}\cdot (\log n)^{\mathds{1}_{d=4}},
\end{multline}
where $\mathbf P_n:=\mathbf P^\emptyset_{\beta_c}[\:\cdot \mid 0\in \mathcal S_n]$.
\end{Coro}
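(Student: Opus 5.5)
The plan is to apply Lemma \ref{lem: derivative} with the law $\mathbb P=\mathbf P_n$ of $\mathcal S_n$ under $\mathbf P^\emptyset_{\beta_c}[\,\cdot\mid 0\in\mathcal S_n]$. Each of the two terms on its right-hand side will then be controlled with Proposition \ref{prop:properties of the random set}.

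First, fix $E\subset E(\mathbb Z^d)$. Under $\mathbf P_n$ the set $\mathcal S_n$ almost surely contains $0$, so Lemma \ref{lem: derivative} applies and gives
\[
\mathbf E_n[\Psi_{E^c,\beta,h}(\mathcal S_n)]\leq 2\mathbf E_n[\varphi_{J(0)}(\mathcal S_n)]+C(\beta-\beta')\sup_{t}\sum_{u\in\Lambda_{n/2}}K_{E^c,J(t),h}(0,u)\,\mathbf E_n[\varphi^u_{\beta_c}(\mathcal S_n)].
\]

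For the first term, write
\[
\mathbf E_n[\varphi_{J(0)}(\mathcal S_n)]=\frac{\mathbf E^\emptyset_{\beta_c}[\mathds 1_{0\in\mathcal S_n}\varphi_{J(0)}(\mathcal S_n)]}{\mathbf P^\emptyset_{\beta_c}[0\in\mathcal S_n]}.
\]
By \eqref{eq:prop 1} the denominator is at least $1/2$, so \eqref{eq:prop 2} bounds the ratio by $2\cdot\tfrac1{16}$ plus the exponential error term. Multiplying by the prefactor $2$ yields $\tfrac14$ plus a constant times the exponential error. This is where the constant $\tfrac1{16}$ in \eqref{eq:prop 2} was calibrated.

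For the second term, note that $\varphi^u_{\beta_c}(S)=0$ unless $u\in S$. Hence
\[
\mathbf E_n[\varphi^u_{\beta_c}(\mathcal S_n)]=\frac{\mathbf E^\emptyset_{\beta_c}[\mathds 1_{0,u\in\mathcal S_n}\varphi^u_{\beta_c}(\mathcal S_n)]}{\mathbf P^\emptyset_{\beta_c}[0\in\mathcal S_n]}\leq 2C(\log n)^{\mathds 1_{d=4}}
\]
by \eqref{eq:prop 3}, uniformly in $u\in\Lambda_{n/2}$. Pulling this maximum out of the sum leaves $\sup_t\sum_{u}K_{E^c,J(t),h}(0,u)$. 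Every coupling of $J(t)$ lies in $[\beta',\beta]\subset[0,\beta_c]$, and $E^c$ is a subgraph of $\mathbb Z^d$. Therefore \eqref{eq: prop sum K 2} of Proposition \ref{prop: sum of K} applies for $h\leq h_0$, giving a bound of $2^{2d+1}h^{-1}M(\beta_c,h)$ uniformly in $t$ and $E$. Theorem \ref{thm: magnetisation} then bounds this by $Ch^{-2/3}|\log h|^{\mathds 1_{d=4}}$, after shrinking $h_0$ if necessary. Collecting the constants gives the third term. Since none of the bounds depend on $E$, we may take the supremum over $E$.

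The only subtle points are checking the hypotheses. Lemma \ref{lem: derivative} requires $\beta'\ge\beta_c/2$, which is assumed. Proposition \ref{prop: sum of K} requires $J_e\le\beta_c$ everywhere, which holds since $\beta\le\beta_c$. Proposition \ref{prop:properties of the random set} is stated for $\beta'<\beta$. I expect no real obstacle here: the corollary is bookkeeping once the conditioning is handled through the ratio with \eqref{eq:prop 1}.
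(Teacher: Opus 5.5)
Your proposal is correct and matches the paper's proof step by step. You apply Lemma~\ref{lem: derivative} with $\mathbb P=\mathbf P_n$, then use \eqref{eq:prop 1} to turn \eqref{eq:prop 2}--\eqref{eq:prop 3} into bounds under $\mathbf P_n$, which gives $\tfrac14$ plus the exponential error and $2C(\log n)^{\mathds 1_{d=4}}$ for the translated terms. Finally you bound $\sup_{E,t}\sum_u K_{E^c,J(t),h}(0,u)\lesssim h^{-1}M(\beta_c,h)\lesssim h^{-2/3}|\log h|^{\mathds 1_{d=4}}$ via Proposition~\ref{prop: sum of K} and Theorem~\ref{thm: magnetisation}, exactly as the paper does, including the remark that $\varphi^u_{\beta_c}(S)=0$ for $u\notin S$.
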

\begin{proof} Fix $\beta',\beta,h$ as in the statement. Let $c_1,C_1>0$ be given by Proposition \ref{prop:properties of the random set}. Plugging \eqref{eq:prop 1} into \eqref{eq:prop 2}--\eqref{eq:prop 3} gives, for $n$ large enough,
\begin{equation}\label{eq:cor pf 1}
	\mathbf E_n[\varphi_{J(0)}(\mathcal S_n)]\leq \frac{1}{8}+2C_1\exp\left(-c_1\frac{(\beta_c-\beta')^{1/2}}{|\log (\beta_c-\beta')|^{\mathds{1}_{d=4}}} n\right)\cdot (\log n)^{\mathds{1}_{d=4}},
\end{equation}
and
\begin{equation}\label{eq:cor pf 2}
		\max_{u\in \Lambda_{n/2}}\mathbf E_n[\varphi_{\beta_c}^u(\mathcal S_n)]\leq 2C_1(\log n)^{\mathds{1}_{d=4}}.
\end{equation}
Applying Lemma \ref{lem: derivative} to $\mathbb P=\mathbf P_n$ gives $C_2>0$ such that
\begin{multline}\label{eq:cor pf 3}
	\sup_{E\subset E(\mathbb Z^d)}\mathbf E_n[\Psi_{E^c,\beta,h}(\mathcal S_n)]\leq \frac{1}{4}+4C_1\exp\left(-c_1\frac{(\beta_c-\beta')^{1/2}}{|\log (\beta_c-\beta')|^{\mathds{1}_{d=4}}} n\right)\cdot (\log n)^{\mathds{1}_{d=4}}
	\\+C_2(\beta-\beta') (\log n)^{\mathds{1}_{d=4}} \sup_{E\subset E(\mathbb Z^d)}\sup_{0\leq t\leq 1}\sum_{u\in \Lambda_{n/2}}K_{E^c,J(t),h}(0,u),
\end{multline}
where we used \eqref{eq:cor pf 1} and \eqref{eq:cor pf 2}. Now, thanks to Proposition \ref{prop: sum of K}, Griffiths' inequality, and Theorem \ref{thm: magnetisation}, if $h$ is small enough,
\begin{equation}\label{eq:cor pf 4}
	\sup_{E\subset E(\mathbb Z^d)}\sup_{0\leq t\leq 1}\sum_{u\in \Lambda_{n/2}}K_{E^c,J(t),h}(0,u)\lesssim h^{-1} M(\beta_c,h)\lesssim h^{-2/3}|\log h|^{\mathds{1}_{d=4}}.
\end{equation} 
		Plugging \eqref{eq:cor pf 4} into \eqref{eq:cor pf 3} concludes the proof.
\end{proof}

\subsection{Proofs of Theorems \ref{thm:finding the good random S}--\ref{thm:finding the good random S d=4}}

We now complete the proofs of Theorems \ref{thm:finding the good random S} and \ref{thm:finding the good random S d=4}, starting with the case $d>4$. Recall that we have set $\mathbf P_n=\mathbf P_{\beta_c}^\emptyset[\: \cdot\mid 0\in \mathcal S_n]$.
\begin{proof}[Proof of Theorem~\textup{\ref{thm:finding the good random S}}] Fix $(\beta,h)\in \mathcal Q^*$ sufficiently close to $(\beta_c,0)$. We use two different arguments depending on whether $\ell(\beta,h)=(\beta_c-\beta)^{-1/2}$ or $\ell(\beta,h)=h^{-1/3}$. 

 Assume first that $\ell(\beta,h)=(\beta_c-\beta)^{-1/2}$. By Theorem \ref{thm:sharplength}, one can find $n\lesssim (\beta_c-\beta)^{-1/2}$ such that there exists $S\subset \Lambda_n$ containing $0$ with $\varphi_{\beta}(S)\leq \tfrac{1}{2}$. Let $\mathcal S_n$ be the random set distributed as $\mathbb P$, the Dirac measure at $S$. By \eqref{eq:BOUND PSI BY PHI},
\begin{equation}
	\sup_{E\subset E(\mathbb Z^d)}\mathbb E[\Psi_{E^c,\beta,h}(\mathcal S_n)]=\sup_{E\subset E(\mathbb Z^d)}\Psi_{E^c,\beta,h}(S)\leq \varphi_{\beta}(S)\leq \frac{1}{2}.
\end{equation}
This concludes the proof in this case.

We now assume that $\ell(\beta,h)=h^{-1/3}$. Let $c_1,C_1>0$ be given by Corollary \ref{coro:bound on psi}. Set $\beta'=\beta-\eta h^{2/3}$, where $\eta>0$ is small enough so that
\begin{equation}\label{eq:pf psi drop d>4 2}
	C_1(\beta-\beta')h^{-2/3}= C_1\eta\leq \frac{1}{8}.
\end{equation}
Observe that 
\begin{equation}\label{eq:pf psi drop d>4 3}
	\beta_c-\beta'\geq \beta-\beta'= \eta h^{2/3}.\end{equation}
For this choice of $\eta$,  we pick $n$ large enough so that
\begin{equation}\label{eq:pf psi drop d>4 4}
	C_1\exp(-c_1(\beta_c-\beta')^{1/2}n)\leq C_1\exp(-c_1\eta^{1/2}h^{1/3}n)\leq \frac{1}{8}.
\end{equation}
One may choose $n\lesssim h^{-1/3}=\ell(\beta,h)$ above. Combining \eqref{eq:pf psi drop d>4 2} and \eqref{eq:pf psi drop d>4 4} with Corollary \ref{coro:bound on psi}, and defining $\mathcal S_n$ to be the random set distributed according to $\mathbb P:=\mathbf P_n$, concludes the proof.
\end{proof}

\begin{proof}[Proof of Theorem~\textup{\ref{thm:finding the good random S d=4}}] Let $\varepsilon>0$.
Fix $(\beta,h)\in \mathcal Q^*$ sufficiently close to $(\beta_c,0)$. Again, we use two different arguments depending on whether $\ell(\beta,h)=(\beta_c-\beta)^{-1/2}$ or $\ell(\beta,h)=h^{-1/3}$. 

 Assume first that $\ell(\beta,h)=(\beta_c-\beta)^{-1/2}$. By Theorem \ref{thm:sharplength}, one can find $n\lesssim (\beta_c-\beta)^{-1/2}|\log (\beta_c-\beta)|$ such that there exists $S\subset \Lambda_n$ containing $0$ with $\varphi_{\beta}(S)\leq \tfrac{1}{2}$. Let $\mathcal S_n$ be the random set distributed as $\mathbb P$, the Dirac measure at $S$. By \eqref{eq:BOUND PSI BY PHI},
\begin{equation}
	\sup_{E\subset E(\mathbb Z^d)}\mathbb E[\Psi_{E^c,\beta,h}(\mathcal S_n)]=\sup_{E\subset E(\mathbb Z^d)}\Psi_{E^c,\beta,h}(S)\leq \varphi_{\beta}(S)\leq \frac{1}{2}.
\end{equation}
This concludes the proof in this case.

We now assume that $\ell(\beta,h)=h^{-1/3}$. Let $c_1,C_1>0$ be given by Corollary \ref{coro:bound on psi}. Set $\beta'=\beta-h^{2/3}|\log h|^{-\alpha}(\log |\log h|)^{-1}$, with $\alpha>0$ to be chosen appropriately. We want to guarantee that, for $h$ sufficiently close to $0$, and $n\asymp\ell(\beta,h)(\log \ell(\beta,h))^{2+\varepsilon}\asymp h^{-1/3}|\log h|^{2+\varepsilon}$,
\begin{equation}\label{eq:pf psi drop d=4 1}
	C_1(\beta-\beta')h^{-2/3}|\log h| \log n=C_1\frac{\log n}{|\log h|^{\alpha-1}\log |\log h|}\leq \frac{1}{8},
\end{equation}
and
\begin{equation}\label{eq:pf psi drop d=4 2}
	C_1\exp\left(-c_1\frac{(\beta_c-\beta')^{1/2}}{|\log (\beta_c-\beta')|}n\right)\log n\leq \frac{1}{8}.
\end{equation}
Observe that, since $\beta_c-\beta'\geq \beta-\beta'$, \eqref{eq:pf psi drop d=4 2} is implied by
\begin{equation}\label{eq:pf psi drop d=4 3}
C_1\exp\left(-c_2\frac{h^{1/3}n}{|\log h|^{1+\alpha/2}(\log |\log h|)^{1/2}} \right)\log n\leq \frac{1}{8},
\end{equation}
for some appropriate $c_2=c_2(c_1)>0$.
Pick $\alpha=2$, and set $n=h^{-1/3}|\log h|^{2+\varepsilon}$. With this choice, \eqref{eq:pf psi drop d=4 3} is implied by
\begin{equation}\label{eq: where eps becomes crucial}
	C_1\exp(-c_3 (\log n)^{\varepsilon/2})\log n\leq \frac{1}{8},
\end{equation}
for some appropriate $c_3=c_3(c_2)>0$,
which holds for $n$ large enough. Moreover, \eqref{eq:pf psi drop d=4 1} is implied by
\begin{equation}
	\frac{C_2}{\log |\log h|}\leq \frac{1}{8},
\end{equation}
for some $C_2=C_2(C_1)>0$, which holds as long as $h$ is close enough to $0$ (or $n$ is large enough). As a result, for this choice of $n$, by Corollary \ref{coro:bound on psi}, and setting $\mathbb P:=\mathbf P_n$, we find that
\begin{equation}
	\sup_{E\subset E(\mathbb Z^d)}\mathbb E[\Psi_{E^c,\beta,h}(\mathcal S_n)]\leq \frac{1}{2},
\end{equation}
	which concludes the proof.
\end{proof}

\section{Properties of $\mathcal S_n$}\label{sec: properties of random set}
In this section, we prove Proposition \ref{prop:properties of the random set}. Recall the notations from Section \ref{sec:definition of S_n}, and set
\begin{equation} 
\mathcal S_n^1:=\mathcal S_n(+\mathbf{e}_1)\cap \Lambda_{n-1}.
\end{equation}
 We begin by recalling some fundamental consequences of the switching lemma derived in \cite{DuminilPanis2024newLB}. 
\begin{Lem}\label{lem:random phi moved} Let $u\in \Lambda_n$. One has
\begin{equation}
	\mathbf E_{\beta_c}^\emptyset[\varphi_{\beta_c}^u(\mathcal S_n)]\leq 2d\sum_{u'\sim_{\theta}u}\sum_{\substack{x,y\in \Lambda_n\\x\sim y}}\mathbf E_{\beta_c}^\emptyset\Big[\mathds{1}\{u',x\in \mathcal S_n^1\}\mathds{1}\{y\connect{\mathcal M_n\:}\mathbb H_n\}\langle \sigma_{u'}\sigma_x\rangle_{\mathcal S_n^1,\beta_c}\Big],
\end{equation}
where $u'\sim_\theta u$ means that $u'$ can be obtained from $u$ by an 
isometry of $\mathbb{Z}^d$ (i.e.\ a composition of coordinate permutations 
and reflections).\end{Lem}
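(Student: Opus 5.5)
We follow the proof of the corresponding estimate in \cite{DuminilPanis2024newLB}. Start from the definition
\begin{equation*}
\varphi^u_{\beta_c}(\mathcal S_n)=\sum_{\substack{x\in \mathcal S_n,\ y\notin \mathcal S_n\\ x\sim y}}\langle \sigma_u\sigma_x\rangle_{\mathcal S_n,\beta_c}\,\beta_c .
\end{equation*}
The sum vanishes unless $u\in\mathcal S_n$, and $\mathcal S_n\subset\Lambda_{n-1}$, so $y\in\Lambda_n$.

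The first step localises the reason why $y\notin\mathcal S_n$. Since $y\in\Lambda_n$ and $y\notin\mathcal S_n$, by \eqref{eq:def S_n} there is a direction $\pm\mathbf e_i$ such that $y\notin\mathcal S_n(\pm\mathbf e_i)$, i.e.\ $y$ is connected to $\mathbb H_n(\pm\mathbf e_i)$ in $\mathcal M_n(\pm\mathbf e_i)$. A union bound over the $2d$ directions bounds $\varphi^u_{\beta_c}(\mathcal S_n)$ by a sum over directions of the same expression with the indicator that $y$ is connected to the corresponding hyperplane. In each term we weaken the constraints using Griffiths' inequality (Proposition~\ref{prop:consequencegriffiths}). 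Because $\mathcal S_n\subset\mathcal S_n(\pm\mathbf e_i)\cap\Lambda_{n-1}$, we have $\langle\sigma_u\sigma_x\rangle_{\mathcal S_n,\beta_c}\le\langle\sigma_u\sigma_x\rangle_{\mathcal S_n(\pm\mathbf e_i)\cap\Lambda_{n-1},\beta_c}$, and the indicator $\mathds 1\{u,x\in\mathcal S_n\}$ is bounded by the corresponding indicator for the larger set. We also bound $\beta_c\le 1$ (or absorb it into the constant). This is where the prefactor $2d$ comes from.

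The second step reduces every direction to $+\mathbf e_1$ by symmetry. For each direction $\pm\mathbf e_i$, fix an isometry $\theta$ of $\mathbb Z^d$ (a coordinate permutation composed with a reflection $x_i\mapsto -x_i$) mapping $\pm\mathbf e_i$ to $+\mathbf e_1$. It maps $\Lambda_n$ to itself, $\mathbb H_n(\pm\mathbf e_i)$ to $\mathbb H_n$, $\mathcal M_n(\pm\mathbf e_i)$ to $\mathcal M_n(+\mathbf e_1)$, and hence $\mathcal S_n(\pm\mathbf e_i)\cap\Lambda_{n-1}$ to $\mathcal S_n^1$. The measure $\mathbf P^\emptyset_{\beta_c}$ is invariant under lattice isometries, and so is the finite-volume Ising two-point function. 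Therefore the direction-$(\pm\mathbf e_i)$ term equals the $+\mathbf e_1$ term with $u$ replaced by $u'=\theta(u)$ and with $x,y$ relabelled (the sum over $x\sim y$ in $\Lambda_n$ is invariant). The relevant point $u'$ is an image of $u$ under an isometry, so summing over all $u'\sim_\theta u$ dominates the sum over directions. This gives the claimed bound, with the $2d$ factor possibly traded for the sum over $u'$ depending on how the terms are counted.

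The main obstacle is bookkeeping rather than analysis, namely checking that the isometry indeed maps $\mathcal M_n(\pm\mathbf e_i)$ onto $\mathcal M_n(+\mathbf e_1)$ including the reflection part $\mathcal R_n$. It also requires the monotonicity in the domain for the two-point function, which is Griffiths with $J_e$ set to $0$ outside the smaller set.
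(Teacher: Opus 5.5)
Your proof is correct and follows the same approach as the paper. You use a union bound over the $2d$ directions together with Griffiths' monotonicity to pass to $\mathcal S_n(\pm\mathbf e_i)\cap\Lambda_{n-1}$, and then isometry invariance of $\mathbf P^\emptyset_{\beta_c}$ to reduce to $+\mathbf e_1$. The paper symmetrises over the orbit of $u$ before transporting, whereas you transport each term and then dominate $\theta(u)$ by the orbit sum, which is equivalent. One correction to your hedge: the argument does not let you trade the $2d$ for the orbit sum in general, since for $u=0$ the orbit is a single point. You do not need to, though: each of the $2d$ direction terms is bounded by the full orbit sum, which gives exactly the stated bound.
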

\begin{proof} Let $u\in \Lambda_n$. We follow the computations below \cite[Equation~(2.15)]{DuminilPanis2024newLB}. Observe that, by definition of $\mathcal S_n$ from \eqref{eq:def S_n} and Griffiths' inequality, 
\begin{align}\label{eq:pf lem prop 1}
\begin{aligned}
	\varphi_{\beta_c}^u(\mathcal S_n)=\beta_c\sum_{\substack{x\in \mathcal S_n\\ y\notin \mathcal S_n, \: y \sim x\\ }}\langle \sigma_u\sigma_x\rangle_{\mathcal S_n,\beta_c}&\leq \sum_{\varepsilon\in \{\pm 1\}}\sum_{i=1}^d \sum_{\substack{x\in \mathcal S_n(\varepsilon\mathbf{e}_i)\cap \Lambda_{n-1}\\y\notin \mathcal S_n(\varepsilon\mathbf{e}_i), \: y\sim x}}\langle \sigma_u\sigma_x\rangle_{\mathcal S_n(\varepsilon\mathbf{e}_i)\cap \Lambda_{n-1},\beta_c}\,\beta_c
	\\&=: \sum_{\varepsilon\in \{\pm 1\}}\sum_{i=1}^d F(u;\mathcal S_n(\varepsilon \mathbf{e}_i))\\&\leq \sum_{\varepsilon\in \{\pm 1\}}\sum_{i=1}^d \Big(\sum_{u'\sim_\theta u} F(u';\mathcal S_n(\varepsilon \mathbf{e}_i))\Big)
	\end{aligned}
\end{align}
We now take the expectation above. Since $\mathbf P_{\beta_c}^\emptyset$ is invariant under isometries of $\mathbb Z^d$, the law of $\Big(\sum_{u'\sim_\theta u} F(u';\mathcal S_n(\varepsilon \mathbf{e}_i))\Big)$ is the same as that of $\Big(\sum_{u'\sim_\theta u} F(u';\mathcal S_n(\mathbf{e}_1))\Big)$.
As a result,
\begin{equation}
	\mathbf E_{\beta_c}^\emptyset[\varphi_{\beta_c}^u(\mathcal S_n)]\leq 2d \mathbf E_{\beta_c}^\emptyset\Big[\sum_{u'\sim_\theta u} F(u';\mathcal S_n(\mathbf{e}_1))\Big],
\end{equation}
which concludes the proof.
\end{proof}

\begin{Lem} One has,
\begin{equation}\label{eq:bound of proba 0 notin S_n}
	\mathbf P_{\beta_c}^{\emptyset}[0\notin \mathcal S_n]\leq 2d\langle \sigma_0\sigma_{2n\mathbf{e}_1}\rangle_{\beta_c},
\end{equation}
and, if $a,x,y\in \Lambda_n$, with $x\sim y$, and $\beta\leq \beta_c$,
\begin{multline}\label{eq:switching where we move 0}
	\mathbf E_{\beta_c}^\emptyset\Big[\mathds{1}\{a,x\in \mathcal S_n^1\}\mathds{1}\{y\connect{\mathcal M_n\:}\mathbb H_n\}\langle \sigma_a\sigma_x\rangle_{\mathcal S_n^1,\beta}\Big]\\\leq \Big(\langle \sigma_a\sigma_x\rangle_{\beta}-\langle \sigma_a\sigma_{\mathcal R_n(x)}\rangle_{\beta}\Big)\langle \sigma_y\sigma_{\mathcal R_n(y)}\rangle_{\beta_c}.
	\end{multline}
\end{Lem}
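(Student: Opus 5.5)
Both estimates rest on the folding structure of $\mathcal M_n$ and the switching lemma, as in \cite{AizenmanDuminilTassionWarzelEmergentPlanarity2019,DuminilPanis2024newLB}. Sample $\n\sim\mathbf P^\emptyset_{\beta_c}$ on $\mathbb Z^d$, restricted first to a large finite symmetric box and then passed to the limit. The restriction $\n^-$ is a sourceless current on the left half-space together with the edges of $E^-(\mathbb H_n)$ touching $\mathbb H_n$. After reflection, $\mathcal R_n(\n^+)$ is an independent sourceless current on the same graph. Conditionally on $\n^0$ the two are indeed independent with the same weights, because the constraint only fixes the parity at vertices of $\mathbb H_n$. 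Thus $\mathcal M_n$ is the trace of a sum $\n_1+\n_2$ of two independent currents on the left half-space $\mathbb H_n^-$. Each has sources on $\mathbb H_n$ determined by $\n^0$, and these sources coincide for $\n_1$ and $\n_2$.

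\emph{Proof of \eqref{eq:bound of proba 0 notin S_n}.} By a union bound over the $2d$ directions and isometry invariance, it suffices to show that $\mathbf P^\emptyset_{\beta_c}[0\connect{\mathcal M_n\:}\mathbb H_n]\leq\langle\sigma_0\sigma_{2n\mathbf e_1}\rangle_{\beta_c}$. In the unfolded picture, $0$ connected to $\mathbb H_n$ in $\mathcal M_n$ means that $0$ is connected to $\mathbb H_n$ in $\n^-$, or that $\mathcal R_n(0)=2n\mathbf e_1$ is connected to $\mathbb H_n$ in $\n^+$. Gluing the two halves of the sum $\n_1+\n_2$ through $\mathbb H_n$, this is the event that $0$ and $\mathcal R_n(0)$ are connected in the double current $\n+\mathcal R_n(\n)$ on the full graph, which is sourceless, sourceless. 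The switching lemma then gives $\mathbf P^{\emptyset,\emptyset}[0\leftrightarrow 2n\mathbf e_1]=\langle\sigma_0\sigma_{2n\mathbf e_1}\rangle^2\leq\langle\sigma_0\sigma_{2n\mathbf e_1}\rangle$. I would use the identity from \cite[Lemma~2.x]{DuminilPanis2024newLB}: $\mathbf P^\emptyset_{\beta_c}[x\connect{\mathcal M_n\:}\mathbb H_n]\leq\langle\sigma_x\sigma_{\mathcal R_n(x)}\rangle_{\beta_c}$. This identity is proved by writing $\mathbf P[x\not\leftrightarrow\mathbb H_n]$ as a ratio of partition functions and switching.

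\emph{Proof of \eqref{eq:switching where we move 0}.} I would condition on $\mathcal S^1_n$. Since $\{a,x\in\mathcal S^1_n\}$ and $\langle\sigma_a\sigma_x\rangle_{\mathcal S^1_n,\beta}$ are measurable with respect to the cluster of $\mathbb H_n$ in $\mathcal M_n$, the strategy is to explore $\mathcal C:=$ the cluster of $\mathbb H_n$ in $\mathcal M_n$ and to use the event $y\in\mathcal C$. With $T:=\Lambda_{n-1}\setminus\mathcal C$ (note that $x\in T$ and $y\in\mathcal C$), the key input is the standard folding bound
\begin{equation*}
\langle\sigma_a\sigma_x\rangle_{T,\beta}\leq\langle\sigma_a\sigma_x\rangle_{\beta}-\langle\sigma_a\sigma_{\mathcal R_n(x)}\rangle_{\beta},
\end{equation*}
valid for $T$ in the left half-space. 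It follows from the Messager--Miracle-Sol\'e/reflection argument: the map $x\mapsto\langle\sigma_a\sigma_x\rangle_{\mathbb H_n^-,\beta}^{\text{Dirichlet}}$ equals the difference above by the image method for the Ising half-space with antisymmetric reflection. Concretely, one compares with the measure having $-$ reflected spins and uses Griffiths' inequality for the domain monotonicity $T\subset\mathbb H_n^-$. This leaves $\mathbf P^\emptyset_{\beta_c}[y\connect{\mathcal M_n\:}\mathbb H_n]\leq\langle\sigma_y\sigma_{\mathcal R_n(y)}\rangle_{\beta_c}$ by the same bound as above, and the product of the two gives the claim.

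The main obstacle is the half-space image identity with general $\beta\leq\beta_c$. It holds only as an inequality, namely that the Ising two-point function on a subset of the half-space is bounded by the antisymmetrised full-space one. I would prove it by writing $\sigma_a(\sigma_x-\sigma_{\mathcal R_n(x)})$ and applying a Ginibre/duplicate-variable inequality in the reflected system, as done in \cite[Lemma~2.4]{DuminilPanis2024newLB}.
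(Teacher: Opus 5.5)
\textbf{There is a genuine gap in your proof of \eqref{eq:switching where we move 0}.} The paper obtains both bounds from the switching argument of \cite[Lemma~2.3]{DuminilPanis2024newLB}. Your argument rests on the pointwise bound $\langle\sigma_a\sigma_x\rangle_{T,\beta}\leq\langle\sigma_a\sigma_x\rangle_{\beta}-\langle\sigma_a\sigma_{\mathcal R_n(x)}\rangle_{\beta}$ for every $T\ni a,x$ in the left half-space, and that inequality is false.

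Take $a=x$. This is allowed in the statement, and it is used in Corollary~\ref{coro:bound translate random s_n}, where $u'$ and $x$ range independently. The left side is $1$; the right side is $1-\langle\sigma_x\sigma_{\mathcal R_n(x)}\rangle_{\beta}<1$.

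The image method is a Gaussian/random-walk fact. For Ising, folding and switching give only the averaged identity
\[
\langle\sigma_a\sigma_x\rangle_{\beta}-\langle\sigma_a\sigma_{\mathcal R_n(x)}\rangle_{\beta}=\mathbf E^{\emptyset}_{\beta}\big[\mathds 1\{a,x\in\mathcal T\}\,\langle\sigma_a\sigma_x\rangle_{\mathcal T,\beta}\big].
\]
Here $\mathcal T$ is the set of $z$ with $z_1\leq n-1$ not connected to $\mathbb H_n$ in $\mathcal M_n$, with the current sampled at $\beta$. Combined with Griffiths' inequality, this shows the free half-space two-point function \emph{dominates} the antisymmetrised one, which is the reverse of your claim. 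On $\mathbb Z$ with $t=\tanh\beta$ and $a\leq x<n$, they are $t^{x-a}$ and $t^{x-a}-t^{2n-x-a}$.

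So the difference structure exists only on average over the random domain. A bound uniform in the realisation of $\mathcal S^1_n$ can be no better than $\langle\sigma_a\sigma_x\rangle_{\Lambda_{n-1},\beta}$, its value on the positive-probability event $\mathcal S^1_n=\Lambda_{n-1}$. That loses the cancellation needed in Corollary~\ref{coro:bound translate random s_n}: without it, the sum over $x$ near $\mathbb H_n$ is only bounded by something of order $n$. In particular, you cannot decouple $\{y\connect{\mathcal M_n\:}\mathbb H_n\}$ from $\mathcal S_n^1$ and multiply two separate estimates.

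\textbf{What works instead.} At least for $\beta=\beta_c$ (how the bound is used in Section~\ref{sec: properties of random set}), you must keep the randomness and switch.
\begin{itemize}
\item By Griffiths, replace $\mathcal S^1_n$ by $\mathcal T\supseteq\mathcal S^1_n$.
\item Condition on the cluster $\mathcal C$ of $\mathbb H_n$ in $\mathcal M_n$. The two folded currents on $\mathcal T$ are then independent sourceless currents, so inserting $\langle\sigma_a\sigma_x\rangle_{\mathcal T,\beta_c}$ amounts to adding the sources $a,x$:
\[
\mathbf E^{\emptyset}_{\beta_c}\big[\mathds 1\{a,x\in\mathcal T,\ y\in\mathcal C\}\,\langle\sigma_a\sigma_x\rangle_{\mathcal T,\beta_c}\big]=\langle\sigma_a\sigma_x\rangle_{\beta_c}\,\mathbf P^{ax}_{\beta_c}[x\notin\mathcal C,\ y\in\mathcal C].
\]
\item Condition further on the cluster $K\ni a,x$ of $x$ in $\mathcal M_n$. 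What remains is a sourceless current on the reflection-symmetric graph $\mathbb Z^d\setminus(K\cup\mathcal R_n(K))$. Hence the conditional probability of $\{y\in\mathcal C\}$ is $\langle\sigma_y\sigma_{\mathcal R_n(y)}\rangle_{\mathbb Z^d\setminus(K\cup\mathcal R_n(K)),\beta_c}\leq\langle\sigma_y\sigma_{\mathcal R_n(y)}\rangle_{\beta_c}$.
\item Finally, $\langle\sigma_a\sigma_x\rangle_{\beta_c}\mathbf P^{ax}_{\beta_c}[x\notin\mathcal C]=\langle\sigma_a\sigma_x\rangle_{\beta_c}-\langle\sigma_a\sigma_{\mathcal R_n(x)}\rangle_{\beta_c}$ by switching.
\end{itemize}
Your proposal also has nothing that reconciles the spin parameter $\beta$ with the current parameter $\beta_c$ when $\beta<\beta_c$. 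The false pointwise bound was silently doing that work too.

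\textbf{On \eqref{eq:bound of proba 0 notin S_n}.} Your final step is exactly the cited \cite[Equation~(2.12)]{DuminilPanis2024newLB}: a union bound over $2d$ directions plus $\mathbf P^{\emptyset}_{\beta_c}[x\connect{\mathcal M_n\:}\mathbb H_n]=\langle\sigma_x\sigma_{\mathcal R_n(x)}\rangle_{\beta_c}$, which is in fact an identity. The heuristic before it is wrong, though:
\begin{itemize}
\item Paths in $\mathcal M_n$ alternate between edges of $\mathcal N^-$ and of $\mathcal R_n(\mathcal N^+)$, so the event is not ``connected in $\n^-$ or in $\n^+$''.
\item $\n$ and $\mathcal R_n(\n)$ are not independent, so the two-current switching lemma does not apply. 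The value $\langle\sigma_0\sigma_{2n\mathbf e_1}\rangle_{\beta_c}^2$ it produces contradicts the identity.
\item Given $\n^0$, the source sets $A^\pm\subset\mathbb H_n$ of the two folded currents are random, with $A^-\triangle A^+=\partial\n^0$; they do not coincide.
\end{itemize}
The identity comes from switching along $\{x,v\}$, where $v$ is a canonical vertex of $\mathbb H_n$ connected to $x$ in $\mathcal M_n$. This switch preserves both $\mathcal M_n$ and $\n^0$.
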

\begin{proof} These results follow from the switching techniques developed in \cite[Lemma~2.3]{DuminilPanis2024newLB}. Equation \eqref{eq:bound of proba 0 notin S_n} is exactly \cite[Equation~(2.12)]{DuminilPanis2024newLB}, while \eqref{eq:switching where we move 0} is a straightforward extension of \cite[Equation~(2.28)]{DuminilPanis2024newLB} (in which $a$ replaces $0$).
\end{proof}

\begin{Coro}\label{coro:bound translate random s_n} Let $d\geq 4$. There exists $C>0$ such that, for every $u\in \Lambda_{n/2}$, 
\begin{equation}
	\mathbf E_{\beta_c}^\emptyset[\varphi_{\beta_c}^u(\mathcal S_n)]\leq C(\log n)^{\mathds{1}_{d=4}}.
\end{equation}
\end{Coro}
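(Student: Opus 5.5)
We combine Lemma \ref{lem:random phi moved} with \eqref{eq:switching where we move 0} at $\beta=\beta_c$, and then sum the resulting two-point functions using \eqref{eq:IRB}. Fix $u\in\Lambda_{n/2}$. Lemma \ref{lem:random phi moved} bounds $\mathbf E_{\beta_c}^\emptyset[\varphi_{\beta_c}^u(\mathcal S_n)]$ by $2d$ times a sum over the at most $2^dd!$ images $u'\sim_\theta u$. Every such $u'$ lies in $\Lambda_{n/2}$, so it suffices to prove a bound uniform in $u'\in\Lambda_{n/2}$. For fixed $u'$, apply \eqref{eq:switching where we move 0} with $a=u'$ and $\beta=\beta_c$. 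This yields
\begin{equation*}
\sum_{\substack{x,y\in\Lambda_n\\x\sim y}}\Big(\langle\sigma_{u'}\sigma_x\rangle_{\beta_c}-\langle\sigma_{u'}\sigma_{\mathcal R_n(x)}\rangle_{\beta_c}\Big)\langle\sigma_y\sigma_{\mathcal R_n(y)}\rangle_{\beta_c}.
\end{equation*}

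Drop the negative reflected term and use \eqref{eq:IRB} for both factors. Since $|y-\mathcal R_n(y)|\geq 2(n-y_1)\geq |n-y_1|$, we get $\langle\sigma_y\sigma_{\mathcal R_n(y)}\rangle_{\beta_c}\lesssim (1\vee (n-y_1))^{2-d}$. Group the $y$ (hence the $x$) according to $k:=n-y_1\in\{0,\dots,2n\}$. For $k\leq n/4$, every $x\sim y$ has $|x-u'|\geq n/4-1\gtrsim n$, because $u'_1\leq n/2$. Hence the slab $\{y_1=n-k\}$ contributes at most
\begin{equation*}
(1\vee k)^{2-d}\sum_{x\in\Lambda_n,\;x_1\in\{n-k,n-k\pm1\}}|x-u'|^{2-d}\lesssim (1\vee k)^{2-d}\,n^{d-1}\,n^{2-d}=(1\vee k)^{2-d}\,n.
\end{equation*}
Summing over $k$ gives $\sum_k k^{2-d}\,n\lesssim n$, which is far too large. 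The crude bound therefore fails, and the cancellation in the difference term has to be used.

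The main obstacle is precisely this: $\langle\sigma_{u'}\sigma_x\rangle-\langle\sigma_{u'}\sigma_{\mathcal R_n(x)}\rangle$ must be small when $x$ is near $\mathbb H_n$. For that I would invoke the Messager--Miracle-Sol\'e monotonicity together with a gradient estimate on the critical two-point function, as done in \cite{DuminilPanis2024newLB} when bounding the analogous sum with $a=0$. The heuristic comes from the image-charge picture: for $x$ at distance $k$ from $\mathbb H_n$ and $|x-u'|\asymp r\gtrsim n$, the difference should be of order $k\,n\,r^{-d}$.

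A softer route avoids regularity of the two-point function altogether. It treats the difference as the probability that $u'$ connects to $x$ but not to $\mathbb H_n$, bounds that by a one-arm-type factor, and reuses exactly the computation of \cite[Equation~(2.28) and below]{DuminilPanis2024newLB}. That computation proves $\mathbf E[\varphi_{\beta_c}(\mathcal S_n)]\lesssim(\log n)^{\mathds 1_{d=4}}$ for the source at $0$, and it only uses that the source is at distance $\gtrsim n$ from $\mathbb H_n$. Since $u'\in\Lambda_{n/2}$ keeps $u'$ at distance $\geq n/2$ from every hyperplane $\mathbb H_n(\pm\mathbf e_i)$, that argument transfers verbatim, with constants uniform in $u'$. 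I would present the proof this way, carrying out the slab-by-slab summation with the improved bound:
\begin{equation*}
\sum_{k}(1\vee k)^{2-d}\cdot k\,n\,n^{d-1}\,n^{-d}\lesssim\sum_{k\leq 2n}k^{3-d},
\end{equation*}
which is $O(1)$ for $d>4$ and $O(\log n)$ for $d=4$. The slabs with $k>n/4$ are bounded directly, since there $\langle\sigma_y\sigma_{\mathcal R_n(y)}\rangle\lesssim n^{2-d}$ and $\sum_{x\in\Lambda_n}|x-u'|^{2-d}\lesssim n^2$, giving $O(n^{4-d})$. This gives the claimed $C(\log n)^{\mathds 1_{d=4}}$.
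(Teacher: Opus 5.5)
Your skeleton is the paper's. You apply Lemma~\ref{lem:random phi moved}, then \eqref{eq:switching where we move 0} with $a=u'$ and $\beta=\beta_c$. You split according to distance $n/4$ from $\mathbb H_n$, bound the far part by $O(n^{4-d})$ using \eqref{eq:IRB}, and reduce the near part to the slab sum $\sum_{k\le n}k^{3-d}$. Your observation that the crude bound only gives $O(n)$ is also correct.

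The gap is the one step that carries all the content, namely
\[
\langle\sigma_{u'}\sigma_x\rangle_{\beta_c}-\langle\sigma_{u'}\sigma_{\mathcal R_n(x)}\rangle_{\beta_c}\lesssim k\,n^{1-d}\qquad\text{for }x\text{ at distance }k\le n/4\text{ from }\mathbb H_n .
\]
You support this only by the image-charge heuristic, and the \emph{softer route} you say you would present does not prove it.

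Reading the difference as the probability that $u'$ is connected to $x$ but not to $\mathbb H_n$ is legitimate. The problem is that the factor $k/n$ you need is a gambler's-ruin, half-space-avoidance estimate: a current cluster started at distance $k$ from $\mathbb H_n$ must travel distance of order $n$ without touching it. A one-arm bound controls reaching distance $n$, not avoiding a nearby hyperplane, so it cannot produce $k/n$. You do not identify any other estimate that would.

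Likewise, the computation of \cite{DuminilPanis2024newLB} you propose to reuse rests on consequences of reflection positivity, i.e.\ on regularity of the two-point function. It cannot be both reused \emph{verbatim} and free of regularity.

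The missing ingredient is the tool you mention and then set aside, and it is exactly what the paper uses. For $x_1>\lfloor 3n/4\rfloor$ and $u'\in\Lambda_{n/2}$, the spectral representation along $\mathbf e_1$ combined with the Messager--Miracle-Sol\'e inequalities \cite{MessagerMiracleSoleInequalityIsing} gives the gradient estimate
\[
\langle\sigma_{u'}\sigma_x\rangle_{\beta_c}-\langle\sigma_{u'}\sigma_{\mathcal R_n(x)}\rangle_{\beta_c}\le C\,\frac{|x-\mathcal R_n(x)|}{n}\,\langle\sigma_0\sigma_{\lfloor n/4\rfloor\mathbf e_1}\rangle_{\beta_c}.
\]
The hypothesis $u'\in\Lambda_{n/2}$ is what guarantees $x_1-u'_1\ge n/4$ here. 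By \eqref{eq:IRB} the right-hand side is $\lesssim k\,n^{1-d}$. Your slab summation then gives $\sum_{k\le n}k^{3-d}\lesssim(\log n)^{\mathds 1_{d=4}}$, which completes the argument.
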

\begin{proof} By Lemma \ref{lem:random phi moved} and \eqref{eq:switching where we move 0}, if $u\in \Lambda_{n/2}$,
\begin{equation}\label{eq:proof of prop 2}
	\mathbf E_{\beta_c}^\emptyset[\varphi_{\beta_c}^u(\mathcal S_n)]\leq 2d\sum_{u'\sim_\theta u}\sum_{\substack{x,y\in \Lambda_n\\x\sim y}}\Big(\langle \sigma_{u'}\sigma_x\rangle_{\beta_c}-\langle \sigma_{u'}\sigma_{\mathcal R_n(x)}\rangle_{\beta_c}\Big)\langle \sigma_y\sigma_{\mathcal R_n(y)}\rangle_{\beta_c}.
	\end{equation}
In \cite[Theorem~1.3]{DuminilPanis2024newLB}, the authors rely on several consequences of reflection positivity (and, in particular, \eqref{eq:IRB}) to prove that
\begin{equation}\label{eq:proof of prop 3}
	\sum_{\substack{x,y\in \Lambda_n\\x\sim y}}\Big(\langle \sigma_{0}\sigma_x\rangle_{\beta_c}-\langle \sigma_{0}\sigma_{\mathcal R_n(x)}\rangle_{\beta_c}\Big)\langle \sigma_y\sigma_{\mathcal R_n(y)}\rangle_{\beta_c}\lesssim (\log n)^{\mathds{1}_{d=4}}.\end{equation}
		The same argument as in \cite{DuminilPanis2024newLB} shows that this bound extends when $0$ is replaced by any $u\in \Lambda_{n/2}$. We provide a proof for the sake of completeness. Fix $u'\sim_\theta u$. Divide the sum on the right-hand side of \eqref{eq:proof of prop 2} according to whether $-n\leq x_1\leq \lfloor 3n/4\rfloor $ or  $\lfloor 3n/4\rfloor  < x_1\leq n$.  By \eqref{eq:IRB}, 
\begin{align}\label{eq:proof of prop 3.3}
\begin{aligned}
    \sum_{\substack{x,y\in \Lambda_n\\x_1\leq \lfloor 3n/4\rfloor \\y\sim x}}\Big(\langle \sigma_{u'}\sigma_x\rangle_{\beta_c}-\langle \sigma_{u'}\sigma_{\mathcal{R}_n(x)}\rangle_{\beta_c}\Big)\langle \sigma_y \sigma_{\mathcal{R}_n(y)}\rangle_{\beta_c} &\leq\sum_{\substack{x,y\in \Lambda_n\\ \: x_1\leq \lfloor 3n/4\rfloor \\y\sim x}}\langle\sigma_{u'}\sigma_x\rangle_{\beta_c}\langle \sigma_y\sigma_{\mathcal{R}_n(y)}\rangle_{\beta_c}
    \\
    &\lesssim 
    n^{2-d}\sum_{x\in \Lambda_{2n}}\langle \sigma_0\sigma_x\rangle_{\beta_c}\lesssim n^{4-d}.
      \end{aligned}
 \end{align}
If $\lfloor 3n/4\rfloor <x_1\leq n$, using the spectral representation of these models and the Messager--Miracle-Sol\'e inequalities \cite{MessagerMiracleSoleInequalityIsing}, we obtain the following gradient estimate
\begin{equation}\label{eq: gradient estimate}
    \langle \sigma_{u'}\sigma_x\rangle_{\beta_c}-\langle \sigma_{u'}\sigma_{\mathcal{R}_n(x)}\rangle_{\beta_c}
    \leq 
    C_1\frac{|x-\mathcal{R}_n(x)|}{n}\langle \sigma_0\sigma_{\lfloor n/4\rfloor \mathbf{e}_1}\rangle_{\beta_c}.
\end{equation}
Using \eqref{eq: gradient estimate} and \eqref{eq:IRB}, we get
\begin{multline}\label{eq:proof of prop 3.6}
    \sum_{\substack{x,y\in \Lambda_n\\ x_1> \lfloor 3n/4\rfloor \\y\sim x}}\Big(\langle \sigma_{u'}\sigma_x\rangle_{\beta_c}-\langle \sigma_{u'}\sigma_{\mathcal{R}_n(x)}\rangle_{\beta_c}\Big)\langle \sigma_y\sigma_{\mathcal{R}_n(y)}\rangle_{\beta_c}
    \\\lesssim n^{1-d}\sum_{\substack{y\in \Lambda_n\\ \:y_1\geq \lfloor 3n/4\rfloor }}|y-\mathcal R_n(y)|\langle \sigma_y\sigma_{\mathcal R_n(y)}\rangle_{\beta_c}
    \lesssim \sum_{1\leq k\leq n}k\langle \sigma_0\sigma_{k\mathbf{e_1}}\rangle_{\beta_c}\lesssim (\log n)^{\mathds{1}_{d=4}}.
    \end{multline}
Plugging \eqref{eq:proof of prop 3.3} and \eqref{eq:proof of prop 3.6} into \eqref{eq:proof of prop 2} concludes the proof.
		\end{proof}
\begin{Rem} One can check that \eqref{eq:proof of prop 3} does not hold if $0$ is replaced by $u$ that is ``too close'' to $\mathbb H_n$. This is the main reason why we switched $\beta$ to $\beta'$ only for edges within $\Lambda_{n/2}$ in \eqref{eq: def J}.
\end{Rem}

We are now in a position to prove Proposition \ref{prop:properties of the random set}. Recall the definition of $J$ from \eqref{eq: def J}.
\begin{proof}[Proof of Proposition~\textup{\ref{prop:properties of the random set}}] Let $d\geq 4$, $(\beta,h)\in \mathcal Q$ and $\beta'\in [0,\beta)$.

We begin with the proof of \eqref{eq:prop 1}. By \eqref{eq:bound of proba 0 notin S_n}, one has
\begin{equation}\label{eq:proof of prop 1}
	\mathbf P_{\beta_c}^\emptyset [0\in \mathcal S_n]\geq 1-2d\langle \sigma_0\sigma_{2n\mathbf{e}_1}\rangle_{\beta_c}\geq 1-\frac{C_1}{1\vee (2n)^{d-2}},
\end{equation} 
where $C_1=C_1(d)>0$ and where we used \eqref{eq:IRB} in the second inequality. The result follows by choosing $n$ large enough.

Next, observe that \eqref{eq:prop 3} is exactly Corollary \ref{coro:bound translate random s_n}.
	
	Finally, we prove \eqref{eq:prop 2}. Let $\varepsilon>0$ to be chosen small enough below. First, 
	\begin{equation}\label{eq:proof of prop 4}
		\mathbf E_{\beta_c}^\emptyset\Big[\sum_{\substack{x\in \mathcal S_n\cap \Lambda_{\varepsilon n}\\y\notin \mathcal S_n, \: y\sim x}}\langle \sigma_0\sigma_x\rangle_{\mathcal S_n,J(0)}J_{xy}(0)\Big]\leq  \beta\sum_{\substack{x\in \Lambda_{\varepsilon n}\\y\sim x}}\langle \sigma_0\sigma_x\rangle_{\beta_c}\langle \sigma_y\sigma_{\mathcal R_n(y)}\rangle_{\beta_c}\lesssim (\varepsilon n)^2 n^{2-d}\leq \varepsilon^2 ,
	\end{equation}
		where we used \eqref{eq:switching where we move 0} and Griffiths' inequality in the first inequality, and \eqref{eq:IRB} in the second one. Fix $\varepsilon\in (0,1/2)$ small enough so that the right-hand side above is at most $\tfrac{1}{16}$. Now, recall the definition of $L(\beta')$ from \eqref{eq:def L(beta)}, and fix a set $S\subset \Lambda_{L(\beta')}$ such that $\varphi_{\beta'}(S)\leq \frac{1}{4}$. We will prove that there exists $c>0$ such that, for $n$ large enough, 
	\begin{equation}\label{eq:proof of prop 5}
		\mathbf E_{\beta_c}^\emptyset\Big[\sum_{\substack{x\in \mathcal S_n\setminus\Lambda_{\varepsilon n}\\y\notin \mathcal S_n, \: y\sim x}}\langle \sigma_0\sigma_x\rangle_{\mathcal S_n,J(0)}J_{xy}(0)\Big]\lesssim \exp\left(-c\frac{\varepsilon n}{L(\beta')}\right)(\log n)^{\mathds{1}_{d=4}}.
	\end{equation}
		The result is clear if $L(\beta')\geq \tfrac{\varepsilon n}{10}$ (by Corollary \ref{coro:bound translate random s_n}). We therefore assume that $L(\beta')\leq \tfrac{\varepsilon n}{10}$. To prove \eqref{eq:proof of prop 5}, we rely on the Simon--Lieb inequality stated in Lemma \ref{lem:SL}. Iterating $k$ times Lemma \ref{lem:SL} with $\Lambda=\mathcal S_n$ and translates of $\mathcal S_n\cap S$, where $k:=\lfloor\tfrac{\varepsilon n}{2(L(\beta')+1)} \rfloor$ gives
	\begin{multline}
		\sum_{\substack{x\in \mathcal S_n\setminus\Lambda_{\varepsilon n}\\y\notin \mathcal S_n, \: y\sim x}}\langle \sigma_0\sigma_x\rangle_{\mathcal S_n,J(0)}J_{xy}(0)\leq \sum_{\substack{u_1\in S\\v_1\notin S, \: v_1\sim u_1}}\langle \sigma_0\sigma_{u_1}\rangle_{S,\beta'} \beta' \\\cdots \sum_{\substack{u_k\in S+v_{k-1}\\v_k\notin S+v_{k-1},\: v_k\sim u_k}}\langle \sigma_{v_{k-1}}\sigma_{u_k}\rangle_{S+v_{k-1},\beta'}\beta' \sum_{\substack{x\in \mathcal S_n\\y\notin \mathcal S_n, \: y\sim x}}\langle \sigma_{v_k}\sigma_x\rangle_{\mathcal S_n,\beta}J_{xy}(0),
		\end{multline}
	where we used that $\Lambda_{k(L(\beta')+1)}\subset \Lambda_{\varepsilon n}$, Griffiths' inequality to replace $\mathcal S_n\cap (S+v_i)$ by $S+v_i$ $(0\leq i \leq k-1)$ above, and the fact that $J(0)$ is equal to $\beta'$ on the box $\Lambda_{n/2}$ which contains $\Lambda_{k(L(\beta')+1)}$. Averaging $\mathcal S_n$ with respect to $\mathbf E^\emptyset_{\beta_c}$ and using Corollary \ref{coro:bound translate random s_n} (observe that $v_k\in \Lambda_{n/2}$) yields
	\begin{equation}\label{eq:proof of prop 6}
		\mathbf E_{\beta_c}^{\emptyset}\Big[\sum_{\substack{x\in \mathcal S_n\setminus\Lambda_{\varepsilon n}\\y\notin \mathcal S_n, \: y\sim x}}\langle \sigma_0\sigma_x\rangle_{\mathcal S_n,J(0)}J_{xy}(0)\Big]\lesssim 4^{-k} (\log n)^{\mathds{1}_{d=4}}.
	\end{equation}
	Combining \eqref{eq:proof of prop 4} (recall how we chose $\varepsilon$) and \eqref{eq:proof of prop 6} gives the existence of $c_1=c_1(\varepsilon,d)>0$ and $C_1=C_1(d)>0$ such that,
	\begin{equation}
		\mathbf E_{\beta_c}^\emptyset[\varphi_{J(0)}(\mathcal S_n)]\leq \frac{1}{16}+C_1\exp\left(-c_1\frac{n}{L(\beta')}\right)\cdot (\log n)^{\mathds{1}_{d=4}}.
	\end{equation}
	The desired result then follows from Theorem \ref{thm:sharplength}.
	\end{proof}

\section{Proof of the lower bounds on $\langle \sigma_0;\sigma_x\rangle_{\beta,h}$}\label{sec:lower bounds}
In this section, we prove the lower bounds in Theorems~\ref{thm:main} 
and~\ref{thm:main bis}. The argument is based on differentiation in $h$: 
we compare $\langle\sigma_0;\sigma_x\rangle_{\beta,h}$ to 
$\langle\sigma_0\sigma_x\rangle_{\beta}$ by controlling the derivative of 
the truncated two-point function with respect to $h$ via 
Proposition \ref{prop: sum of K}, and by lower 
bounding the two-point function at $h=0$ via Theorems \ref{thm:bounds 2pt d>4} and \ref{thm:bounds 2pt d=4}. A key element of the proof is the following bound, which can be extracted\footnote{Observe that the $\beta$ multiplying $h$ in the $\tanh$ term in \cite{AizenmanFernandezCriticalBehaviorMagnetization1986} has disappeared here due to our different parametrisation of the Hamiltonian.} from the proof of \cite[Theorem~5.7]{AizenmanFernandezCriticalBehaviorMagnetization1986} (see display after (5.36) there):
\begin{multline}\label{eq: lb on derivative in h of 2pt}
	0\geq \frac{\partial}{\partial h}\langle \sigma_0;\sigma_x\rangle_{\beta,h}\geq -2\sum_{y,z\in \mathbb Z^d}\Big([\tanh(h)+2d\beta M(\beta,h)]K_{\beta,h}(0,z)K_{\beta,h}(x,z)K_{\beta,h}(y,z)\\+\beta M(\beta,h)\sum_{z'\sim z}K_{\beta,h}(0,z')K_{\beta,h}(x,z')K_{\beta,h}(y,z)\Big).
\end{multline}

\begin{proof}[Proof of the lower bound in Theorem~\textup{\ref{thm:main}}] Let $d>4$. Let $(\beta,h)\in \mathcal Q^*$. Combining Theorem \ref{thm: magnetisation} and Proposition \ref{prop: sum of K} gives, for every $s\in (0,h]$,
\begin{multline}\label{eq:pf lb 1}
	\sum_{y,z\in \mathbb Z^d}[\tanh(s)+2d\beta M(\beta,s)]K_{\beta,s}(0,z)K_{\beta,s}(x,z)K_{\beta,s}(y,z)\\\lesssim s^{1/3}\cdot s^{-1}s^{1/3} \cdot \sum_{z\in \mathbb Z^d}\langle \sigma_0\sigma_z\rangle_{\beta}\langle \sigma_z\sigma_x\rangle_{\beta}.
\end{multline}
Using Griffiths' inequality and \eqref{eq:IRB} yields
\begin{equation}\label{eq:pf lb 2}
	\sum_{z\in \mathbb Z^d}\langle \sigma_0\sigma_z\rangle_{\beta}\langle \sigma_z\sigma_x\rangle_{\beta}\lesssim \sum_{z\in \mathbb Z^d}\frac{1}{1\vee |z|^{d-2}}\frac{1}{1\vee |x-z|^{d-2}}\lesssim \frac{1}{1\vee |x|^{d-4}}.
\end{equation}
Plugging \eqref{eq:pf lb 2} into \eqref{eq:pf lb 1} gives
\begin{equation}\label{eq:pf lb 3}
	\textup{LHS of \eqref{eq:pf lb 1}}\lesssim \frac{s^{-1/3}}{1\vee |x|^{d-4}}. 
\end{equation}
A similar computation gives, for every $s\in(0,h]$,
\begin{equation}\label{eq:pf lb 4}
	\beta M(\beta,s)\sum_{y,z\in \mathbb Z^d}\sum_{z'\sim z}K_{\beta,s}(0,z')K_{\beta,s}(x,z')K_{\beta,s}(y,z)\lesssim \frac{s^{-1/3}}{1\vee |x|^{d-4}}.
\end{equation}
Using the fundamental theorem of calculus, \eqref{eq: lb on derivative in h of 2pt}, and \eqref{eq:pf lb 3}--\eqref{eq:pf lb 4} gives $C_1=C_1(d)>0$ such that, for every $x\in \mathbb Z^d$,
\begin{equation}\label{eq:pf lb 5}
	\langle \sigma_0;\sigma_x\rangle_{\beta,h}=\langle \sigma_0\sigma_x\rangle_\beta+\int_0^h \frac{\partial}{\partial s}\langle \sigma_0;\sigma_x\rangle_{\beta,s}\mathrm{d}s\geq \langle \sigma_0\sigma_x\rangle_{\beta}-\frac{C_1 h^{2/3}}{1\vee |x|^{d-4}}.
\end{equation}
By Theorem \ref{thm:bounds 2pt d>4}, there exists $c_1>0$ such that, for $|x|\leq c_1\ell(\beta,0)$, $\langle \sigma_0\sigma_x\rangle_{\beta}\geq c_1(1\vee |x|)^{2-d}$. Now choose $c_2= c_1\wedge\sqrt{c_1/(2C_1)}$. If $1\leq |x|\leq c_2\ell(\beta,h)\leq c_2h^{-1/3}$, then
\begin{equation}
	\frac{C_1 h^{2/3}}{1\vee |x|^{d-4}}\leq \frac{(c_1/2)}{1\vee |x|^{d-2}}.
\end{equation}
As a result, if $|x|\leq c_1\ell(\beta,0)\wedge c_2\ell(\beta,h)=c_2\ell(\beta,h)$,
\begin{equation}
	\langle \sigma_0;\sigma_x\rangle_{\beta,h}\geq \frac{1}{2}\langle \sigma_0\sigma_x\rangle_{\beta}\gtrsim \frac{1}{1\vee |x|^{d-2}},
\end{equation}
which concludes the proof.
\end{proof}

A similar argument can be used when $d=4$, but more care is needed as the upper bound in \eqref{eq:pf lb 3} is now infinite. To solve this issue, we rely on the following result.
\begin{Lem}\label{lem:bubble of K} Let $d=4$. There exist $C,\delta>0$ such that, if $(\beta,h)\in \mathcal Q_\delta$,
\begin{equation}
	\sum_{z\in \mathbb Z^4} K_{\beta,h}(0,z)^2\leq C\log \ell(\beta,h).
\end{equation}
\end{Lem}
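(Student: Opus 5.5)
We plan to split the sum according to the scale $L:=\ell(\beta,h)$, and to use two different bounds on $K_{\beta,h}$ in the two regions.

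\textbf{Short range.} For $|z|\leq L$, Proposition \ref{prop:trunc vs K vs untrunc} and Griffiths' inequality give $K_{\beta,h}(0,z)\leq \langle\sigma_0\sigma_z\rangle_{\beta_c}$. The infrared bound \eqref{eq:IRB} then yields
\begin{equation*}
\sum_{|z|\leq L}K_{\beta,h}(0,z)^2\lesssim \sum_{|z|\leq L}\frac{1}{1\vee|z|^4}\lesssim \log L .
\end{equation*}

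\textbf{Long range.} For $|z|>L$ we need more than the pointwise bound $|z|^{-2}$: the tail $\sum_{|z|>L}|z|^{-4}$ diverges logarithmically at infinity. Exponential decay on scale $L$ would suffice, but the exponential upper bound of Theorem \ref{thm:main bis} only provides it on scale $L(\log L)^{2+\varepsilon}$. Summing $|z|^{-4}$ against that decay gives a contribution of order $\log\big((\log L)^{2+\varepsilon}\big)=O(\log\log L)$, which is still acceptable.

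Concretely, we use Theorem \ref{thm:finding the good random S d=4} together with Proposition \ref{prop:sufficient condition to exp decay random}. Their combination (this is exactly the argument proving \eqref{eq:nc up bound thm bis}, applied to $K$ rather than to the truncated function) gives, with $n\lesssim L(\log L)^{2+\varepsilon}$,
\begin{equation*}
K_{\beta,h}(0,z)\leq \frac{C}{1\vee|z|^2}\exp\big(-c|z|/n\big).
\end{equation*}
Splitting dyadically over $|z|\in[2^j,2^{j+1})$, each shell contributes $O(e^{-c2^j/n})$. Summing over $j$ gives at most $O(\log(n/L))+O(1)=O(\log\log L)$ for the shells with $L<2^j\leq n$, and $O(1)$ for $2^j>n$. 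Altogether,
\begin{equation*}
\sum_{z}K_{\beta,h}(0,z)^2\lesssim \log L+\log\log L\lesssim \log L ,
\end{equation*}
with $\varepsilon$ fixed, say $\varepsilon=1$, and $\delta$ chosen so that $L$ is large.

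An alternative for the long-range part avoids the logarithmic loss. In the magnetic regime, the summability bound \eqref{eq: prop sum K 2} combined with Theorem \ref{thm: magnetisation} gives $\sum_z K_{\beta,h}(0,z)\lesssim h^{-2/3}|\log h|$. Combined with the pointwise bound $K\leq |z|^{-2}\leq L^{-2}$ on $|z|>L$, this yields
\begin{equation*}
\sum_{|z|>L}K^2\leq L^{-2}\sum_z K\lesssim L^{-2}h^{-2/3}|\log h|\asymp\log L .
\end{equation*}
This is also sufficient. In the thermal regime the same bound holds by using $K\leq\langle\sigma_0\sigma_z\rangle_\beta$ and the zero-field bubble estimate (or Theorem \ref{thm:bounds 2pt d=4}). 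We would in fact prefer this second route, as it is cleaner.

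\textbf{Main obstacle.} The only delicate point is the control of the tail $|z|>L$. In the thermal regime we must check that $\sum_z\langle\sigma_0\sigma_z\rangle_\beta\lesssim L^2\log L$, which is Theorem \ref{thm: susceptibility}'s zero-field input; this again gives $O(\log L)$.
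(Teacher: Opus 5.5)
Your proposal is correct. Your first route is essentially the paper's proof. The paper takes $\varepsilon=1$ in Theorem~\ref{thm:finding the good random S d=4} and combines it with Proposition~\ref{prop:sufficient condition to exp decay random} to get $K_{\beta,h}(0,z)\lesssim (1\vee|z|^2)^{-1}\exp(-c|z|/(\ell(\log\ell)^3))$. It then sums directly, $\sum_{k\geq1}k^{-1}e^{-ck/(\ell(\log\ell)^3)}\lesssim\log\ell$, with no split at scale $L$; your dyadic bookkeeping is the same computation.

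Your second route, which you prefer, is genuinely different and also valid. Instead of pointwise decay beyond $L$, it uses the averaged bound $\sum_z K_{\beta,h}(0,z)\lesssim\ell^2\log\ell$. This is exactly what the proof of Theorem~\ref{thm: susceptibility} establishes: via $K\leq\langle\sigma_0\sigma_z\rangle_\beta$ and the zero-field susceptibility bound in the thermal regime, and via \eqref{eq: prop sum K 2} and Theorem~\ref{thm: magnetisation} in the magnetic one. You combine it with $\sup_{|z|>L}K_{\beta,h}(0,z)\lesssim L^{-2}$ from \eqref{eq:IRB}. Since $x\mapsto x^2\log x$ is increasing, you could even take the minimum of the two summability bounds and skip the case split.

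What your route buys is logical independence. It uses no near-critical pointwise information on $K$, so Lemma~\ref{lem:bubble of K}, and hence the lower bound of Theorem~\ref{thm:main bis}, no longer rests on the random-set and extrapolation machinery of Sections~\ref{sec:extrapolation}--\ref{sec: properties of random set}.

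What the paper's route buys is brevity. Once the upper bound on $K$ is available uniformly on $\mathcal Q_\delta$, the lemma is a one-line summation. The $(\log\ell)^3$ loss in the decay scale is harmless there, because it only enters through $\log n$.
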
 
\begin{proof} Let $\varepsilon=1$. Let $C_1,\delta>0$ be given by Theorem \ref{thm:finding the good random S d=4} and $c_2,C_2>0$ be given Proposition \ref{prop:sufficient condition to exp decay random}. By the same results, if $(\beta,h)\in \mathcal Q_\delta$, for every $x\in \mathbb Z^4$,
\begin{equation}
	K_{\beta,h}(0,x)\leq \frac{C_2}{1\vee |x|^2}\exp\left(-(c_2/C_1)\frac{|x|}{\ell(\beta,h)(\log \ell(\beta,h))^{3}}\right).
\end{equation}
With this bound, we find
\begin{equation}
	\sum_{z\in \mathbb Z^4} K_{\beta,h}(0,z)^2\lesssim \sum_{k\geq 1}\frac{1}{k}\exp\left(-(c_2/C_1)\frac{k}{\ell(\beta,h)(\log \ell(\beta,h))^{3}}\right)\lesssim \log \ell(\beta,h).
\end{equation}
This concludes the proof.
\end{proof}

\begin{proof}[Proof of the lower bound in Theorem~\textup{\ref{thm:main bis}}] We reproduce the argument used in dimensions $d>4$. By Theorem \ref{thm: magnetisation} and Proposition \ref{prop: sum of K}, for every $s\in (0,h]$,
\begin{multline}\label{eq:pf lb 1 d=4}
	\sum_{y,z\in \mathbb Z^4}[\tanh(s)+2d\beta M(\beta,s)]K_{\beta,s}(0,z)K_{\beta,s}(x,z)K_{\beta,s}(y,z)\\\lesssim s^{1/3}|\log s|\cdot s^{-1} s^{1/3}|\log s| \cdot \sum_{z\in \mathbb Z^4}K_{\beta,s}(0,z)K_{\beta,s}(x,z).
\end{multline}
Now, by the Cauchy--Schwarz inequality and Lemma \ref{lem:bubble of K},
\begin{equation}\label{eq:pf lb 2 d=4}
	\sum_{z\in \mathbb Z^4}K_{\beta,s}(0,z)K_{\beta,s}(x,z)\leq \sum_{z\in \mathbb Z^4} K_{\beta,s}(0,z)^2\lesssim \log \ell(\beta,s)\lesssim |\log s|.
\end{equation}
Combining \eqref{eq:pf lb 2 d=4} and \eqref{eq:pf lb 1 d=4} gives
\begin{equation}\label{eq:pf lb 3 d=4}
	\textup{LHS of \eqref{eq:pf lb 1 d=4}}\lesssim s^{-1/3}|\log s|^3.
\end{equation}
A similar computation gives, for every $s\in(0,h]$,
\begin{equation}\label{eq:pf lb 4 d=4}
	\beta M(\beta,s)\sum_{y,z\in \mathbb Z^4}\sum_{z'\sim z}K_{\beta,s}(0,z')K_{\beta,s}(x,z')K_{\beta,s}(y,z)\lesssim s^{-1/3}|\log s|^3.
\end{equation}
Using the fundamental theorem of calculus, \eqref{eq: lb on derivative in h of 2pt}, and \eqref{eq:pf lb 3 d=4}--\eqref{eq:pf lb 4 d=4} gives $C_1>0$ such that, for every $x\in \mathbb Z^4$,
\begin{equation}\label{eq:pf lb 5 d=4}
	\langle \sigma_0;\sigma_x\rangle_{\beta,h}=\langle \sigma_0\sigma_x\rangle_\beta+\int_0^h \frac{\partial}{\partial s}\langle \sigma_0;\sigma_x\rangle_{\beta,s}\mathrm{d}s\geq \langle \sigma_0\sigma_x\rangle_{\beta}-C_1 h^{2/3}|\log h|^3.
\end{equation}
By Theorem \ref{thm:bounds 2pt d=4}, there exists $c_1>0$ such that, for $|x|\leq c_1\ell(\beta,0)$, 
\begin{equation}
\langle \sigma_0\sigma_x\rangle_{\beta}\geq \frac{c_1}{1\vee |x|^2\log |x|}.
\end{equation}
Moreover, if $|x|^2\log |x|\leq (h^{2/3}|\log h|^3)^{-1}$, or alternatively $|x|\leq c_2 h^{-1/3}|\log h|^{-2}$ for $c_2>0$ small enough,
\begin{equation}
	C_1 h^{2/3}|\log h|^3\leq \frac{(c_1/2)}{1\vee |x|^2\log |x|}.
\end{equation}
As a result, if $|x|\leq c_1\ell(\beta,0)\wedge c_2 h^{-1/3}|\log h|^{-2}$,
\begin{equation}
	\langle \sigma_0;\sigma_x\rangle_{\beta,h}\geq \frac{1}{2}\langle \sigma_0\sigma_x\rangle_{\beta}\gtrsim \frac{1}{1\vee |x|^2\log |x|}.
\end{equation}
Since $\ell(\beta,h)(\log \ell(\beta,h))^{-2}\lesssim c_1\ell(\beta,0)\wedge c_2 h^{-1/3}|\log h|^{-2}$, this concludes the proof.
\end{proof}

\bibliographystyle{plain}
\bibliography{biblio_sorted.bib}
\end{document}